\newtheorem{theorem}{Theorem}[section]
\newtheorem{lemma}[theorem]{Lemma}
\theoremstyle{definition}
\newtheorem{definition}[theorem]{Definition}
\theoremstyle{remark}
\newtheorem{remark}[theorem]{Remark}
\numberwithin{equation}{section} \linespread{1.3}
\begin{document}

\title[Wavelets, Multiplier spaces and Schr\"{o}dinger type operators ]
 {Wavelets, Multiplier spaces and application to Schr\"{o}dinger type operators with non-smooth potentials }

\author[Pengtao Li]{Pengtao Li}
\address{Department of Mathematics, Shantou University, Shantou, Guangdong, China.}
\email{ptli@stu.edu.cn}

\author[Qixiang Yang]{Qixiang Yang}
\address{School of Mathematics and Statics, Wuhan University, Wuhan, 430072, China.}
\email{qxyang@whu.edu.cn}
\author[Yueping Zhu]{Yueping Zhu}
\address{School of Scineces, Nantong University, Nantong, 226007, China.}
\email{zhuyueping@ntu.edu.cn}

\thanks{The research is supported by NSFC No. 11171203; New Teachers'
Fund for Doctor Stations, Ministry of Education 20114402120003; and
Guangdong Natural Science Foundation S2011040004131.}
 \keywords{Daubechies wavelets, multiplier spaces, Sobolev spaces, logarithmic Morrey spaces.}

 \subjclass[2000]{Primary 42B20; 76D03; 42B35; 46E30}
\maketitle

\date{}

\begin{abstract}
 In this paper, we employ Meyer wavelets
to characterize multiplier spaces between Sobolev spaces without
using capacity.
 Further, we introduce logarithmic Morrey spaces $M^{t,\tau}_{r,p}(\mathbb{R}^{n})$
 to
establish the inclusion relation between Morrey spaces and multiplier
spaces. By wavelet characterization and fractal skills,
we construct a counterexample to show that the scope of the index
$\tau$ of $M^{t,\tau}_{r,p}(\mathbb{R}^{n})$ is sharp. As an application, we consider a Schr\"odinger type operator
with potentials in $M^{t,\tau}_{r,p}(\mathbb{R}^{n})$.
\end{abstract}

\section{Introduction}
 A
function $g$ is called a multiplier from $H^{t+r,p}(\mathbb{R}^{n})$
to $H^{t,p}(\mathbb{R}^{n})$ if for every function $f\in
H^{t+r,p}(\mathbb{R}^{n})$, the product $fg\in
H^{t,p}(\mathbb{R}^{n})$. We denote by $X^{t}_{r,p}(\mathbb{R}^{n})$
the class of all such functions $g$. As useful tools, multipliers on
the spaces of differential functions are applied to the study of
various problems in harmonic analysis and differential equations.
For example, the coefficients of a differential operator can be seen
as multipliers. For a function $u$ belonging to some Banach space,
M. Cannone reminded us that the non linear term $u^{2}$ can be
regarded as the product of a function $u$ in this Banach space and a
multiplier $u$. M. Cannone made many contributions on nonlinear
problems. See \cite{Ca1, Ca2, CK, Le}. For more information on both
multiplier spaces and PDE, we refer the readers to V. Maz'ya and T.
Shaposhnikova's celebrated monograph \cite{MS} and their recent
work.

In \cite{MS}, V. Maz'ya
and T. Shaposhnikova gave many characterizations of different kinds
of multiplier spaces. For example, they obtained that for $t\geq 0, r>0$, $1<p<{n}/{(r+t)}$,
the multiplier spaces $X^{t}_{r,p}(\mathbb{R}^{n})$ can be characterized  by capacity on
arbitrary compact sets.
The multiplier spaces
$X^{t}_{r,p}(\mathbb{R}^{n})$ are defined as follows.
\begin{definition}\label{def1}(\cite{MS})
Fix $1<p<\infty$ and $r, t\geq0$. The multiplier space
$X^{t}_{r,p}(\mathbb{R}^{n})$ is defined as the set of all the
functions $f(x)$ such that
$$\|f\|_{X^{t}_{r,p}(\mathbb{R}^{n})}:=\sup_{\|g\|_{H^{t+r,p}(\mathbb{R}^{n})}\leq1}\|fg\|_{H^{t,p}(\mathbb{R}^{n})}<\infty.$$
\end{definition}
For a compact set $e\subset\mathbb{R}^{n}$, the capacity ${\rm cap}(e,
H^{t,p})$ on $e$ is defined by
$${\rm cap}(e, H^{t,p})=\inf\Big\{\|u\|^{p}_{H^{t,p}(\mathbb{R}^{n})}:\ u\in \mathcal{S}, u\geq1 \text{ on }e\Big\},$$
where $\mathcal{S}$ is the Schwartz class of rapidly decreasing
smooth functions on $\mathbb{R}^{n}$.
\begin{lemma}\label{le1}{\rm(\cite{MS})}
 Given $r>0$ and $t\geq 0$.

(i) For $1<p<{n}/{(r+t)}$, $f\in X^{t}_{r,p}(\mathbb{R}^{n})$ if and only if
$$\sup_{e\subset\mathbb{R}^{n}}\left(\frac{\|(-\Delta)^{t/2}f\|_{L^{p}(e)}}{({\rm cap }(e, H^{t+r,p}))^{1/p}}
+\frac{\|f\|_{L^{p}(e)}}{({\rm cap }(e,
H^{r,p}))^{1/p}}\right)<\infty.$$

(ii) For $1<p<{n}/{r}$ and any cube $ Q$ with length less than 1, the capacity
${\rm{ cap}}(Q, H^{r,p})$ is less than $C|Q|^{1-{pr}/{n}}$.

\end{lemma}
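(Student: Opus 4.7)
The plan is to handle part (ii) first, as it is essentially a one-line construction, and then tackle the more delicate capacitary characterization (i). Throughout I will use the equivalent norm $\|u\|_{H^{s,p}}^p \sim \|u\|_{L^p}^p + \|(-\Delta)^{s/2}u\|_{L^p}^p$ and the fact that ${\rm cap}(e, H^{s,p})$ is computed by an infimum over smooth admissible functions.

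For \textbf{part (ii)}, I would produce an explicit admissible function by scaling. Fix once and for all a function $\eta \in \mathcal{S}(\mathbb{R}^n)$ with $\eta \equiv 1$ on $[-1,1]^n$ and $\eta$ supported in $[-2,2]^n$. For a cube $Q$ with side length $\ell < 1$ centered at $x_Q$, set $u_Q(x) = \eta\bigl(2(x - x_Q)/\ell\bigr)$, so that $u_Q \geq 1$ on $Q$. Straight scaling gives $\|u_Q\|_{L^p}^p \lesssim \ell^n$ and $\|(-\Delta)^{r/2} u_Q\|_{L^p}^p \lesssim \ell^{\,n - pr}$. Since $\ell < 1$ and $pr > 0$, the Riesz term dominates, so $\|u_Q\|_{H^{r,p}}^p \lesssim \ell^{\,n - pr} = |Q|^{1 - pr/n}$, and passing to the infimum in the definition of capacity finishes this part. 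I do not expect obstacles here.

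For the forward direction of \textbf{part (i)}, the strategy is to test the multiplier inequality against near-extremal functions for the capacities. Given $f \in X^t_{r,p}$ and compact $e$, choose $g$ nearly extremal for ${\rm cap}(e, H^{t+r,p})$ with $g \geq 1$ on $e$; then $\|fg\|_{H^{t,p}} \lesssim \|f\|_{X^t_{r,p}} ({\rm cap}(e, H^{t+r,p}))^{1/p}$. Expanding via a fractional Leibniz decomposition
$$(-\Delta)^{t/2}(fg) = g \cdot (-\Delta)^{t/2} f + f \cdot (-\Delta)^{t/2} g + \Pi(f,g),$$
restricting to $e$ (where $g \geq 1$), and absorbing $f \cdot (-\Delta)^{t/2} g$ and the bilinear remainder $\Pi(f,g)$ into similar capacity terms, yields the bound for $\|(-\Delta)^{t/2}f\|_{L^p(e)}$. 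For the second summand one argues similarly from $\|fg\|_{L^p} \geq \|f\|_{L^p(e)}$, but now lifting through a Bessel potential $g = (1-\Delta)^{-t/2} h$ so that test functions extremal for the $H^{r,p}$-capacity are converted into $H^{t+r,p}$-admissible functions; this is what explains the asymmetry between the two capacities appearing in the statement.

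For the converse direction, the program is to use Maz'ya's capacitary strong-type inequality to pass from pointwise bounds in terms of capacity to $L^p$-control. Specifically, I would decompose $\|fg\|_{H^{t,p}}^p \sim \|fg\|_{L^p}^p + \|(-\Delta)^{t/2}(fg)\|_{L^p}^p$, apply the Leibniz rule to the second piece, and estimate each term $\int |f|^p |g|^p$, $\int |(-\Delta)^{t/2}f|^p |g|^p$, and the remainder by a layer-cake decomposition along the level sets of (a maximal function of) $g$, using the hypothesis to control each level via the capacity of the corresponding sub-level set. Summing these capacitary estimates against $\|g\|_{H^{t+r,p}}^p$ via the capacitary strong-type inequality gives the multiplier bound. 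The main obstacle I anticipate is precisely the control of the fractional Leibniz remainders $\Pi(f,g)$ (which requires Kato--Ponce type estimates compatible with capacity) and a clean execution of the capacitary strong-type inequality, which is exactly the technically heaviest machinery developed in Chapter~1 of Maz'ya--Shaposhnikova.
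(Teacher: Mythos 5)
The paper does not prove this lemma at all: it is quoted verbatim from Maz'ya--Shaposhnikova \cite{MS} and used as a black box, so there is no in-paper argument to measure your proposal against. Judged on its own terms, your treatment of part (ii) is complete and correct: the scaled bump $u_Q$ is an admissible Schwartz test function, the scaling identities $\|u_Q\|_{L^p}^p\lesssim \ell^{\,n}$ and $\|(-\Delta)^{r/2}u_Q\|_{L^p}^p\lesssim \ell^{\,n-pr}$ are right, and $\ell<1$ together with $n-pr>0$ (from $p<n/r$) makes the Riesz term dominant, giving ${\rm cap}(Q,H^{r,p})\lesssim |Q|^{1-pr/n}$.

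Part (i), however, is an outline with two genuine gaps. First, in the forward direction the bound on $\|f\|_{L^{p}(e)}/({\rm cap}(e,H^{r,p}))^{1/p}$ requires testing $f$ against functions admissible for the $H^{r,p}$-capacity, whereas the hypothesis $f\in X^{t}_{r,p}$ only controls products $fg$ with $g\in H^{t+r,p}$. Your proposed fix --- writing $g=(1-\Delta)^{-t/2}h$ --- does not convert an $H^{r,p}$-admissible $u$ (i.e.\ $u\geq 1$ on $e$) into an $H^{t+r,p}$-admissible one: the Bessel potential is an averaging operator with a positive unit-mass kernel and does not preserve the pointwise constraint $u\geq 1$ on $e$. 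What is actually needed at this step is the separate, nontrivial embedding of $X^{t}_{r,p}$ into the space of multipliers from $H^{r,p}$ to $L^{p}$ with comparable norms, which is its own theorem in \cite{MS}. Second, in both directions you defer the entire analytic difficulty to ``absorbing'' the fractional Leibniz remainder $\Pi(f,g)$ and to the capacitary strong-type inequality; these are precisely the technical core of the Maz'ya--Shaposhnikova argument and are asserted rather than carried out. So the proposal correctly identifies the right machinery and proves (ii), but it does not constitute a proof of (i).
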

Our motivation is based upon the following consideration. For
complicated compact sets, it is  very difficult to compute the
capacity. The main aim of this paper is to give some wavelet characterizations
and introduce  some  sufficient conditions which can be verified easily.
Precisely, for $r>0, t\geq0$ and
$t+r<1<p<{n}/{(r+t)}$, we will give a characterization of the
multipliers from $H^{t+r, p}(\mathbb{R}^{n})$ and $H^{r,
p}(\mathbb{R}^{n})$ by Meyer wavelets without using capacity.
See Theorems \ref{th3}. Also our method can be
applied to study the relation between multiplier spaces and
Morrey spaces. To deal with the case $t>0$, we have to introduce
the almost local operator $T^{t}$. See \S 2.

Lemma \ref{le1} implies that the multiplier space
$X^{t}_{r,p}(\mathbb{R}^{n})$ is a subspace of Morrey space
$M^{t}_{r,p}(\mathbb{R}^{n})$. It is natural to ask if the reverse
inclusion relation holds. Unfortunately, for $t=0$, the imbedding
$X^{t}_{r,p}(\mathbb{R}^{n})\subset M^{t}_{r,p}(\mathbb{R}^{n})$ is
not an isomorphism.  In \cite{Le}, P. G. Lemari\'e  gave a
counterexample to show that when $n-2r$ is an integer,
$X^{0}_{r,2}(\mathbb{R}^{n})\neq M^{0}_{r,2}(\mathbb{R}^{n})$.
Recently, P. G. Lemari\'e \cite{Le1} and Yang-Zhu \cite{YZ}
constructed some counterexamples for $t=0$ and $1<p<{n}/{r}$.

For $t> 0$, we have to consider the action of the differentiation,
so we can not construct counterexample like the case $t=0$ in \cite{YZ}.
Our counterexample  in Theorem \ref{th6} depends on our wavelet characterization, Theorem \ref{th10}
and fractal skills. From this counterexample, we can see that the product of $f\in
M^{t}_{r,p}(\mathbb{R}^{n})$ and $g\in H^{t+r, p}(\mathbb{R}^{n})$
may produce a blow up phenomenon of logarithmic type on fractal sets
with Hausdorff dimension $n-p(r+t)$.
To eliminate this defect,  we
introduce a logarithmic type Morrey space
$M^{t,\tau}_{r,p}(\mathbb{R}^{n})$ and prove that for
$\tau>{1}/{p'}$,
$$M^{t,\tau}_{r,p}(\mathbb{R}^{n})\subset
X^{t}_{r,p}(\mathbb{R}^{n})\subset M^{t}_{r,p}(\mathbb{R}^{n}) =
M^{t,0}_{r,p}(\mathbb{R}^{n}),$$ where $r>0,\ t\geq0$ and
$1<p<{n}/{(r+t)}$. See \S 4.

It should be pointed out that, in the
above inclusion relation, the scope of $\tau$ is
$({1}/{p'},~\infty)$, where $p'$ is the conjugate number of $p$.
In \S 5, our counterexample implies that, for  $0<\tau\le {1}/{p'}$ , there exists some
function $f\in M^{t,\tau}_{r,p}(\mathbb{R}^{n})$, but $f\notin
X^{t}_{r,p}(\mathbb{R}^{n})$. See \S 5 for the details.
Theorems \ref{th10} and  \ref{th6} illustrate the
difference between Morrey spaces and multiplier spaces.

Another motivation ʮis that the results about multipliers on Sobolev  spaces can be applied to
the study on partial differential equations. For example, in
\cite{MV}, V. Maz'ya and I. E. Verbitsky considered the multipliers
from $H^{1,2}(\mathbb{R}^{n})$ to
 $H^{-1,2}(\mathbb{R}^{n})$. For a Schr\"odinger operator $L=I-\Delta+V$,
they got many sufficient and necessary conditions such that $V$ is a
multiplier from ${H}^{1,2}(\mathbb{R}^{n})$ to
${H}^{-1,2}(\mathbb{R}^{n})$. For more information, we refer the
readers to \cite{Le, MS, MV, MV2} and the references therein.

As an application of our results, we consider the solution in Sobolev spaces $H^{t+r,p}(\mathbb{R}^{n})$ to the equation:
\begin{eqnarray}\label{eq7}
(I+(-\Delta)^{r/2}+V)f&=&g,
\end{eqnarray}
where $g\in H^{t,p}(\mathbb{R}^{n})$ and $V\in
M^{t,\tau}_{r,p}(\mathbb{R}^{n})$ with $r>0, t\geq0$,
$1<p<{n}/{(r+t)}$, $\tau>{1}/{p'}$. If $V$ is a function of
H\"{o}lder class, one usual method to deal with equation (\ref{eq7})
is the boundedness of Calder\'{o}n-Zygmund operators. As a function
in the logarithmic Morrey spaces $M^{t,\tau}_{r,p}(\mathbb{R}^{n})$,
$V$ may be not a $L^{\infty}$ function. In \S 6, by Theorem
\ref{th5}, we prove that for $V(x)\in
M^{t,\tau}_{r,p}(\mathbb{R}^{n})$, the above equation (\ref{eq7})
has an unique solution in the Sobolev space
$H^{t+r,p}(\mathbb{R}^{n})$.

In this paper, we use four tools in analysis. One is the
multi-resolution analysis introduced by Y. Meyer and S. Mallat in
1990s. The other is the almost local operator $T^{t}$. See \S 2. By
the projection operators generated from multi-resolution analysis,
S. Dobynski (cf. \cite{CLMS} ) obtained a decomposition of the
product of two functions. In order to adapt to our needs, we make
some modification to Dobynski's decomposition. The third main skills are
to use combination atoms and to introduce some differential methods.
The forth main skill is to choose special functions such that their
wavelet coefficients restricted on some fractal sets. See \S 4 \, and \S 5.

Our paper is organized as follows.
In \S 2, we state some notations and known results
which will be used throughout this paper. In \S 3, we give a
wavelet characterization of the multiplier spaces
$X^{t}_{r,p}(\mathbb{R}^{n})$.  In \S 4, we introduce a class
of logarithmic Morrey spaces $M^{t,\tau}_{r,p}(\mathbb{R}^{n})$ and
get a very simple sufficient condition of
$X^{t}_{r,p}(\mathbb{R}^{n})$. In \S 5, for $M^{t,\tau}_{r,p}(\mathbb{R}^{n})$, we
construct a counterexample to prove the sharpness of the scope of the index $\tau$ obtained in \S 4.
In the last section, we consider an application to PDE problem.

\section{Some preliminaries}
In this section, we state some notations, knowledge and preliminary
lemmas which will be used in the sequel. Firstly, we recall some
background knowledge of wavelets and multi-resolution analysis.

We will adopt real-valued tensor product wavelets  to study the
multiplier spaces in this paper. Let
$E_{n}=\{0,1\}^{n}\backslash\{0\}$. For $\varepsilon=0$
(respectively, $\varepsilon\in E_{n}$), we assume that
$\Phi^{\varepsilon}(x)$ is a scaling function (respectively,
wavelet). In the proof, we use only Meyer wavelets and regular
Daubechies wavelets. We say a Daubechies wavelet is regular if it
has sufficient vanishing moment until order $m$ and
$\Phi^{\varepsilon}(x)\in C^{m}_{0}([-2^{M}, 2^{M}])$, where the
regularity exponent $m$ is large enough and $M$ is determined by
$m$, see \cite{M, W} for more details. For any $\varepsilon\in \{0,
1\}^{n}, j\in\mathbb{N}$ and $k\in\mathbb{Z}^{n}$, we denote
$\Phi^{\varepsilon}_{j, k}(x)=2^{jn/2}\Phi^{\varepsilon}(2^{j}x-k)$.
In addition we define
$$\Lambda_{n}=\Big\{(\varepsilon, j, k): \varepsilon\in \{0, 1\}^{n},
j\in\mathbb{N}, k\in\mathbb{Z}^{n} \text{ and } \varepsilon\neq0,
\text{ if } j>0\Big\}.$$
For fixed tempered distribution $f$,
if we use wavelets which is sufficient regular,
then  we can define  $f^{\varepsilon}_{j, k}=\left<f,
\Phi^{\varepsilon}_{j, k}\right>$.
And the wavelet representation
$f =\sum\limits_{(\varepsilon, j, k)\in\Lambda_{n}}f^{\varepsilon}_{j, k}\Phi^{\varepsilon}_{j, k}$
holds in the sense of distribution.

Let $\left\{V^{1}_{j},\ j\in\mathbb{Z}\right\}$ be an orthogonal
multi-resolution in $L^{2}(\mathbb{R})$ with the scaling function
$\Phi^{0}(x)$. Denote by $W^{1}_{j}$ the orthogonal complement space
of $V^{1}_{j}$ in $V^{1}_{j+1}$, that is, $W^{1}_{j}= V^{1}_{j+1}
\ominus V^{1}_{j}$. Let $\left\{\Phi^{1}(x-k),\
k\in\mathbb{Z}\right\}$ be an orthogonal basis in $W^{1}_{0}$. For
$\varepsilon=(\varepsilon_{1},\cdots, \varepsilon_{n})\in
\{0,1\}^{n}$, denote $\Phi^{\varepsilon}(x)=
\prod^{n}\limits_{i=1}\Phi^{\varepsilon_{i}}(x_{i})$. For
$V_{j}=\left\{ f(x)= \sum\limits_{k\in \mathbb{Z}^{n}} f^{0}_{j,k}
\Phi^{0}_{j,k}(x), \mbox { where } \{f^{0}_{j,k}\}_{k\in
\mathbb{Z}^{n}}\in l^{2}\right\}$ and  $W_{j}=\left\{ f(x)=
\sum\limits_{\varepsilon\in E_{n}, k\in \mathbb{Z}^{n}}
f^{\varepsilon}_{j,k} \Phi^{\varepsilon}_{j,k}(x), \mbox { where }
\{f^{\varepsilon}_{j,k}\}_{\varepsilon\in E_{n}, k\in
\mathbb{Z}^{n}}\in l^{2}\right\}$, we have
\begin{lemma}\label{th9}
$\{V_{j},\ j\in\mathbb{Z}\}$ is an orthogonal multi-resolution with
the scaling function $\Phi^{0}(x)$.  $W_{j}$ is the orthogonal
complement space of $V_{j}$ in $V_{j+1}$, that is, $W_{j}= V_{j+1}
\ominus  V_{j}$. Further $\left\{\Phi^{\varepsilon}_{j,k},
(\varepsilon, j,k)\in \Lambda_{n} \right\}$ is  an orthogonal basis
in $V_{0}\bigoplus\limits_{j\geq 0} W_{j}=L^{2}(\mathbb{R}^{n})$.
\end{lemma}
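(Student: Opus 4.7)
The plan is to obtain the $n$-dimensional MRA as a tensor product of $n$ copies of the $1$-dimensional MRA $\{V^1_j\}$, and then to extract the wavelet basis of $W_j$ by expanding $V_{j+1}=V^1_{j+1}\otimes\cdots\otimes V^1_{j+1}$ via the decomposition $V^1_{j+1}=V^1_j\oplus W^1_j$ in each factor. All three assertions then follow from a single combinatorial bookkeeping argument.

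First I would verify that $\{V_j\}$ is an orthogonal MRA. Since $V_j$ is defined as the closed linear span of $\{\Phi^0_{j,k}\}_{k\in\mathbb{Z}^n}$ and $\Phi^0(x)=\prod_i\Phi^0(x_i)$, the tensor-product structure identifies $V_j$ with $V^1_j\otimes\cdots\otimes V^1_j$. The nesting $V_j\subset V_{j+1}$, the scaling relation $f(x)\in V_j\Leftrightarrow f(2x)\in V_{j+1}$, the density of $\bigcup_j V_j$ in $L^2(\mathbb{R}^n)$, and the triviality of $\bigcap_j V_j$ all follow coordinatewise from the corresponding $1$-D properties. Orthonormality of $\{\Phi^0_{j,k}\}_{k\in\mathbb{Z}^n}$ in $V_j$ is immediate from orthonormality of $\{\Phi^0(\cdot-k)\}_{k\in\mathbb{Z}}$ and Fubini.

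Next I would identify $W_j=V_{j+1}\ominus V_j$. Writing
\[
V_{j+1}=\bigotimes_{i=1}^{n}\bigl(V^1_j\oplus W^1_j\bigr)=\bigoplus_{\varepsilon\in\{0,1\}^n}\bigotimes_{i=1}^{n} V^{1,\varepsilon_i}_j,
\]
where $V^{1,0}_j:=V^1_j$ and $V^{1,1}_j:=W^1_j$, the summand $\varepsilon=0$ is exactly $V_j$, so $W_j$ is the sum of the remaining $2^n-1$ tensor factors, indexed by $\varepsilon\in E_n$. Each of these factors is spanned orthonormally by $\{\Phi^\varepsilon_{j,k}\}_{k\in\mathbb{Z}^n}$, and distinct factors are mutually orthogonal because $V^1_j\perp W^1_j$ in $L^2(\mathbb{R})$ combined with Fubini gives $\langle \Phi^{\varepsilon}_{j,k},\Phi^{\varepsilon'}_{j,k'}\rangle=0$ whenever $\varepsilon\neq\varepsilon'$. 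Hence $\{\Phi^\varepsilon_{j,k}\}_{\varepsilon\in E_n,\,k\in\mathbb{Z}^n}$ is an orthonormal basis of $W_j$.

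Finally I would assemble the full basis. From the MRA identity $L^2(\mathbb{R}^n)=V_0\oplus\bigoplus_{j\ge 0}W_j$ (which reduces to density of $\bigcup_j V_j$ together with $V_{j+1}=V_j\oplus W_j$), and from orthonormal bases of $V_0$ (indexed by $\varepsilon=0$, $j=0$, $k\in\mathbb{Z}^n$) and of each $W_j$ (indexed by $\varepsilon\in E_n$, $k\in\mathbb{Z}^n$), one concatenates to get an orthonormal basis precisely indexed by $\Lambda_n$. The only step requiring attention is the orthogonality computation $\langle \Phi^\varepsilon_{j,k},\Phi^{\varepsilon'}_{j',k'}\rangle=0$ for $(\varepsilon,j,k)\neq(\varepsilon',j',k')$: for $j=j'$ this is the tensor-product argument above, and for $j\neq j'$ it follows from $W_j\subset V_{j+1}\perp W_{j'}$ when $j'\ge j+1$. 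There is no substantive analytic difficulty; the argument is purely the standard tensor-product construction, and the main care needed is to keep the indexing set $\Lambda_n$ consistent with the decomposition (scaling function at $j=0$ only, wavelets at all $j\ge 0$).
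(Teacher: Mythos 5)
Your argument is correct: it is the standard tensor-product MRA construction, and your bookkeeping of the index set $\Lambda_{n}$ (scaling functions $\Phi^{0}_{0,k}$ only at $j=0$, wavelets $\Phi^{\varepsilon}_{j,k}$ with $\varepsilon\in E_{n}$ for all $j\geq 0$) matches the paper's definition exactly. The paper itself offers no proof of this lemma, stating it as known background from the classical multi-resolution theory, so there is no alternative argument to compare yours against.
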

Denote by $P_{j}$ and $Q_{j}$ the projection operators from
$L^{2}(\mathbb{R}^{n})$ to $V_{j}$ and $W_{j}$, respectively. By Lemma \ref{th9},
S. Dobynski got a decomposition of the product of two
functions $f$ and $g$, which is similar to Bony's paraproduct (see \cite{CLMS}).
Denote
$$\tilde{\Lambda}_{n}=\Big\{(\varepsilon,\varepsilon', j,k,k'),\
\varepsilon, \varepsilon'\in\{0,1\}^{n}\backslash\{0\}, j\geq0,
k,k'\in\mathbb{Z}^{n}, (\varepsilon,k)\neq(\varepsilon',k')\Big\}.$$
By the projection operators $P_{j}$ and $Q_{j}$, we divide the
product of $f(x)$ and $g(x)$ into the following terms.
\begin{eqnarray*} f(x)
g(x)&=&P_{0}(f)P_{0}(g)+\sum_{j\geq0}P_{j}(f)Q_{j}(g)+\sum_{j\geq0}Q_{j}(f)P_{j}(g)\\
&+&\sum_{\tilde{\Lambda}_{n}}f^{\varepsilon}_{j,k}g^{\varepsilon'}_{j,k'}
\Phi^{\varepsilon}_{j,k}(x)\Phi^{\varepsilon'}_{j,k'}(x)+\sum_{\Lambda_{n},
\varepsilon\neq0}f^{\varepsilon}_{j,k}g^{\varepsilon}_{j,k}\left(\Phi^{\varepsilon}_{j,k}(x)\right)^{2}.
\end{eqnarray*}
To facilitate our use, we make a modification to the above
decomposition and use special wavelets for different cases. Let $N$
be a positive integer. We decompose the product $f(x)g(x)$ as
\begin{equation}\label{for1}
\begin{split}
fg&=\sum^{\infty}_{j=1}\left[P_{j+1}(f)P_{j+1}(g)-P_{j}(f)P_{j}(g)\right]+P_{0}(f)P_{0}(g)\\
&=\sum^{\infty}_{j=N}\left[Q_{j}(f)Q_{j}(g)+P_{j}(f)Q_{j}(g)+Q_{j}(f)P_{j}(g)\right]+P_{N}(f)P_{N}(g)
\end{split}
\end{equation}
and the term $\sum^{\infty}\limits_{j=N}Q_{j}(f)P_{j}(g)$ can be decomposed
as

\begin{equation}\label{for2}
\begin{split}
\sum^{\infty}_{j=N}Q_{j}(f)P_{j}(g)
&=\sum^{\infty}_{j=N}Q_{j}(f)\sum^{N}_{t=1}Q_{j-t}(g)+\sum^{\infty}_{j=N}Q_{j}(f)P_{j-N}(g)\\
&=\sum^{\infty}_{j=0}\sum^{N}_{t=1}Q_{j+t}(f)Q_{j}(g)+\sum^{\infty}_{j=0}Q_{j+N}(f)P_{j}(g).
\end{split}
\end{equation}

For any
$j\in\mathbb{N}$ and $k=(k_{1}, k_{2},\dots,
k_{n})\in\mathbb{Z}^{n}$, let $Q_{j,\
k}=\prod\limits^{n}_{s=1}[2^{-j}k_{s}, 2^{-j}(k_{s}+1)]$ and denote
by $\Omega$ the set of all dyadic cubes $Q_{j,\ k}$.
For arbitrary set $Q$, we denote by $\tilde{Q}$ the
$2^{M+2}-$multiple of $Q$. Finally, let $\chi(x)$ be the
characteristic function of the unit cube $Q_{0}$ and $\tilde{\chi}$ be the
characteristic function of $\tilde{Q}_{0}$.

In 1970s, H. Triebel introduced Triebel-Lizorkin spaces
$F^{r,q}_{p}(\mathbb{R}^{n})$ (\cite{T}). Many function spaces can
be seen as the special cases for $F^{r,q}_{p}(\mathbb{R}^{n})$. For
example, $F^{r,2}_{1}(\mathbb{R}^{n})$ is the fractional Hardy
space. For $1<p<\infty$, $F^{r,2}_{p}(\mathbb{R}^{n})$ are the
Sobolev spaces $H^{r,p}(\mathbb{R}^{n})$. For $p=\infty$,
$F^{-r,2}_{\infty}(\mathbb{R}^{n})$ is the fractional BMO space
$BMO^{r}(\mathbb{R}^{n})$ which is defined by
$BMO^{r}(\mathbb{R}^{n}):=(I-\Delta)^{-{r}/{2}}BMO(\mathbb{R}^{n})$,
where $I$ is the unit operator, $\Delta$ is the Laplace operator.
Here $BMO(\mathbb{R}^{n})$ denotes the non-homogeneous bounded mean
oscillation space which is defined as the set of the functions such
that
$$\sup\limits_{|Q|=1}|f_{Q}|=\sup\limits_{Q}\frac{1}{|Q|}\left|\int_{Q}f(x)dx\right|\leq
C$$ and
$$\sup_{|Q|\leq1}\frac{1}{|Q|}\int_{Q}|f(x)-f_{Q}|^{2}dx<\infty.$$

For $1\leq p<\infty$ and $r\in\mathbb{R}$, it is well known that
$\left(F^{r,2}_{p}(\mathbb{R}^{n})\right)'=F^{-r,
2}_{p'}(\mathbb{R}^{n})$. The following lemma gives a
characterization of $F^{r,2}_{p}(\mathbb{R}^{n})$ by Meyer wavelets
and regular Daubechies wavelets. For the proof, we refer the readers
to \cite{Y1, YSY}.
\begin{lemma}\label{le2}
(i) For $1\leq p<\infty$ and $|r|<m$, using Meyer wavelets or
$m$-regular Daubechies wavelets,  we have the following equivalent
characterizations,
\begin{eqnarray*}
&&g(x)=\sum_{(\varepsilon, j, k)\in\Lambda_{n}}g^{\varepsilon}_{j,
k}\Phi^{\varepsilon}_{j,k}(x)\in F^{r,2}_{p}(\mathbb{R}^{n})\\
&\Longleftrightarrow&\left\|\left(\sum_{(\varepsilon, j,k)\in
\Lambda_{n}}2^{2j(r+{n}/{2})}|g^{\varepsilon}_{j,
k}|^{2}\chi(2^{j}-k)\right)^{1/2}\right\|_{L^{p}}<\infty\\
&\Longleftrightarrow&\left\|\left(\sum_{(\varepsilon, j,k)\in
\Lambda_{n}}2^{2j(r+{n}/{2})}|g^{\varepsilon}_{j,
k}|^{2}\tilde{\chi}(2^{j}-k)\right)^{1/2}\right\|_{L^{p}}<\infty.
\end{eqnarray*}
(ii) Given $|r|<m$. $g(x)=\sum\limits_{(\varepsilon,
j,k)\in\Lambda_{n}}g^{\varepsilon}_{j, k}\Phi^{\varepsilon}_{j,
k}(x)\in F^{r,2}_{\infty}(\mathbb{R}^{n})$ if and only if there
exists $1<p<\infty$ such that for any $Q\in\Omega$,
$$\left\|\left(\sum_{\varepsilon\in E_{n}, Q_{j,k}\subset Q}2^{2j(r+{n}/{2})}
|g^{\varepsilon}_{j,k}|^{2}\chi(2^{j}-k)\right)^{1/2}\right\|_{L^{p}}\leq
C|Q|^{1/p}.$$
\end{lemma}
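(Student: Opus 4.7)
My plan is to reduce both parts of Lemma \ref{le2} to the classical Littlewood-Paley characterization of the Triebel-Lizorkin scale. Since the wavelets $\{\Phi^\varepsilon_{j,k}\}$ produce an orthogonal decomposition $L^2(\mathbb{R}^n) = V_0 \oplus \bigoplus_{j \geq 0} W_j$ in which each projection $Q_j g = \sum_{\varepsilon \neq 0, k} g^\varepsilon_{j,k} \Phi^\varepsilon_{j,k}$ is frequency-localized at scale $2^j$, I expect $Q_j g$ to play the role of a standard Littlewood-Paley block $\Delta_j g$. The regularity hypothesis $|r|<m$ is what ensures that the wavelet coefficients $g^\varepsilon_{j,k} = \langle g, \Phi^\varepsilon_{j,k}\rangle$ are well defined as a dual pairing, since $\Phi^\varepsilon_{j,k}\in F^{-r,2}_{p'}(\mathbb{R}^n)$ in that range.

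For part (i), the first step is to establish the pointwise comparison
$$|Q_j g(x)|^2 \sim \sum_{\varepsilon \neq 0,\, k} 2^{jn}\,|g^\varepsilon_{j,k}|^2\,\chi(2^j x - k),$$
valid up to a vector-valued maximal function, which follows from the almost-disjointness of the supports of the $\Phi^\varepsilon_{j,k}$ at fixed scale $j$ and from the compact support (or rapid decay) of $\Phi^\varepsilon$. Weighting by $2^{2jr}$, summing in $j$, and taking $L^p$ norms, the Fefferman-Stein vector-valued inequality in $L^p(\ell^2)$ yields the equivalence with $\|g\|_{F^{r,2}_p}$ for $1<p<\infty$, with the $p=1$ case handled through the atomic decomposition of the Hardy scale. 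The passage between $\chi$ and $\tilde{\chi}$ is one-sided trivial since $\chi \leq \tilde{\chi}$, and the reverse direction uses that $\tilde{\chi}(2^j\cdot - k)$ differs from $\chi(2^j\cdot - k)$ only by a bounded dilation, so the larger expression is pointwise dominated by the Hardy-Littlewood maximal function of the smaller one, and Fefferman-Stein closes the loop.

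For part (ii), I plan to exploit the duality $\bigl(F^{r,2}_1(\mathbb{R}^n)\bigr)' = F^{-r,2}_\infty(\mathbb{R}^n)$ together with the atomic characterization of $F^{r,2}_1$: atoms are supported in a single dyadic cube $Q$ and satisfy a normalized square-function bound. Testing the wavelet expansion of $g$ against such an atom on $Q$ converts the boundedness as a linear functional into precisely the claimed Carleson-type condition on $\{g^\varepsilon_{j,k}: Q_{j,k}\subset Q\}$, and a John-Nirenberg-style argument then shows that this Carleson condition is independent of the auxiliary exponent $p\in(1,\infty)$. The main obstacle throughout is the handling of the endpoints $p=1$ and $p=\infty$, where the vector-valued maximal inequality is not directly available and one must work through atomic or molecular decompositions, exploiting the compact support and regularity of the Daubechies or Meyer wavelets to handle cancellation of order up to $m>|r|$.
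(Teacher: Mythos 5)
The paper does not actually prove Lemma~\ref{le2}; it defers to \cite{Y1, YSY}, and your sketch follows the same standard route used in those sources (pointwise comparison of the wavelet square function with the Littlewood--Paley square function up to maximal functions, Fefferman--Stein for $1<p<\infty$, atomic/molecular decompositions at $p=1$, the $\chi$ versus $\tilde{\chi}$ passage via domination by powers of $M\chi$, and duality plus a Carleson-measure self-improvement for $p=\infty$), so it is essentially the intended argument. The one point worth sharpening is that for $m$-regular Daubechies wavelets the hypothesis $|r|<m$ is needed not merely to make the pairings $\langle g,\Phi^{\varepsilon}_{j,k}\rangle$ well defined, but to supply the almost-diagonal estimates (as in Lemma~\ref{le6}) that justify treating $Q_{j}g$ as a genuine Littlewood--Paley block, since compactly supported wavelets are only approximately frequency-localized.
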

The wavelet characterizations of function spaces have been studied by many authors. In
Chapters 5 and 6 of \cite{M}, Y. Meyer established wavelet
characterizations for many function spaces. In \cite{YCP}, Q. Yang, Z. Cheng
and L. Peng considered wavelet characterization of Lorentz type
Triebel-Lizorkin spaces and Lorentz type Besov spaces.
In \cite{Y1}, Q. Yang introduced the wavelet
definition of Besov type Morrey  spaces. W. Yuan, W. Sickel
and D. Yang considered the atomic decomposition for
Besov type Morrey spaces and Triebel-Lizorkin type Morrey  spaces in \cite{YSY}.

Morrey spaces $M^{t}_{r,p}(\mathbb{R}^{n})$ were introduced by
Morrey in 1938 and play an important role in the research of partial
differential equations. In 2003,  Wu and Xie \cite{WX} proved that
generalized Morrey spaces are also generalization of $Q$-type
spaces. In recent 20 years, $Q$-type spaces are studied extensively
(cf \cite{EJPX, PY, Y1,  YSY}).

Let $f_{t, Q}$ be the mean value of $(I-\Delta)^{{t}/{2}}f$ on a cube $Q$,
 $$f_{t, Q}=\frac{1}{|Q|}\int_{Q}(I-\Delta)^{{t}/{2}}f(x)dx.$$
The Morrey spaces $M^{t}_{r,p}(\mathbb{R}^{n})$ are defined as
follows.
\begin{definition}
For $1\leq p<\infty$ and $r$, $t\geq0$, the Morrey space
$M^{t}_{r,p}(\mathbb{R}^{n})$ is defined as the set of the functions
$f$ such that $\sup\limits_{|Q|=1}|f_{t,Q}|\leq C$ and
$$\int_{Q}\left|(I-\Delta)^{{t}/{2}}f(x)-f_{t, Q}\right|^{p}dx\leq C|Q|^{1-p(r+t)/n},$$
where $Q$ is any cube in $\mathbb{R}^{n}$ with $|Q|\leq1$.
\end{definition}
  In \cite{PY, YSY}, the authors proved that Morrey spaces
$M^{t}_{r,p}(\mathbb{R}^{n})$ can  be also characterized by wavelets.
We state it as the
following theorem and refer to \cite{YSY} for the proof.
\begin{theorem}\label{th7}
Given $t\in\mathbb{R}$, $1<p<\infty$ and $0\leq p(r+t)<n$.
$$f(x)=\sum_{(\varepsilon,
j,k)\in\Lambda_{n}}f^{\varepsilon}_{j,k}\Phi^{\varepsilon}_{j,k}(x)\in
M^{t}_{r,p}(\mathbb{R}^{n})$$ if and only if for any $ Q\in\Omega$
with $|Q|\leq1$,
$$\int_{Q}\left(\sum_{\varepsilon\in E_{n}, Q_{j,k}\subset Q}
2^{j(n+2t)}|f^{\varepsilon}_{j,k}|^{2}\chi(2^{j}x-k)\right)^{p/2}dx\leq
C|Q|^{1-{p(r+t)}/{n}}.$$
\end{theorem}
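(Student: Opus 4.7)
The plan is to establish both directions of the equivalence by coupling the wavelet characterization of $F^{t,2}_{p}(\mathbb{R}^{n})=H^{t,p}(\mathbb{R}^{n})$ recorded in Lemma \ref{le2} with a careful spatial decomposition relative to a fixed dyadic cube $Q\in\Omega$ with $|Q|\leq 1$. The starting observation is that both the Morrey oscillation $\int_{Q}|(I-\Delta)^{t/2}f(x)-f_{t,Q}|^{p}dx$ and the square-function integral on the wavelet side measure purely high-frequency, purely local content of $f$ on $Q$: constants cancel against the mean $f_{t,Q}$ on the Morrey side and are invisible to $\varepsilon\in E_{n}$ wavelets on the wavelet side.

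\emph{Forward direction.} Assume $f\in M^{t}_{r,p}(\mathbb{R}^{n})$ and set $h=(I-\Delta)^{t/2}f$. Using $\langle h,\Phi^{\varepsilon}_{j,k}\rangle=\langle f,(I-\Delta)^{t/2}\Phi^{\varepsilon}_{j,k}\rangle$ together with the vanishing moments of $\Phi^{\varepsilon}_{j,k}$ for $\varepsilon\in E_{n}$, the wavelet coefficients of $h$ picked up by wavelets essentially supported in $Q$ are unchanged when $h$ is replaced by $h-h_{Q}$. A dyadic-cube restricted version of the square-function identity of Lemma \ref{le2}(i) then bounds the integral on the right-hand side of the theorem by $\int_{Q}|h-h_{Q}|^{p}dx$ up to harmless boundary tails, which is $\leq C|Q|^{1-p(r+t)/n}$ by the Morrey hypothesis.

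\emph{Backward direction.} Assuming the wavelet estimate, split $f=f_{1}+f_{2}$ with $f_{1}=\sum_{\varepsilon\in E_{n},\,Q_{j,k}\subset\tilde{Q}}f^{\varepsilon}_{j,k}\Phi^{\varepsilon}_{j,k}$ collecting the high-frequency terms essentially supported in $Q$ and $f_{2}=f-f_{1}$. For $f_{1}$, the global wavelet characterization in Lemma \ref{le2}(i) bounds $\|(I-\Delta)^{t/2}f_{1}\|_{L^{p}(\mathbb{R}^{n})}$ by the full $F^{t,2}_{p}$ square function of $f_{1}$, and covering $\tilde{Q}$ by a bounded number of dyadic cubes of scale $|Q|$ and applying the hypothesis on each yields $\lesssim|Q|^{1/p-(r+t)/n}$. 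For $f_{2}$ the surviving terms are of two types: (a) wavelets with $Q_{j,k}\not\subset\tilde{Q}$ whose kernels $(I-\Delta)^{t/2}\Phi^{\varepsilon}_{j,k}$ decay rapidly away from $\tilde{Q}_{j,k}$, and (b) coarse-scale wavelets with $Q_{j,k}\supsetneq Q$ whose image under $(I-\Delta)^{t/2}$ is smooth on $Q$ and therefore oscillates by at most $|Q|^{1/n}$ times a gradient bound. Summing these contributions and absorbing their average value into $f_{t,Q}$ delivers the Morrey estimate.

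\emph{Main obstacle.} The delicate step is the $f_{2}$ analysis in the backward direction, which requires uniform kernel bounds for $(I-\Delta)^{t/2}$ applied to compactly supported wavelets at \emph{all} scales, both finer and coarser than $|Q|$, together with summability of the resulting tail sums; the hypothesis $0\leq p(r+t)<n$ is essential here to ensure convergence. A secondary technical point is reconciling the scaling-function part $P_{0}(f)$, which is absent from the wavelet square function but constrained by the normalization $\sup_{|Q|=1}|f_{t,Q}|\leq C$ in the Morrey definition; this is handled by invoking the wavelet hypothesis at the base scale $|Q|=1$ to control the coarse-scale coefficients and hence $f_{t,Q}$ itself.
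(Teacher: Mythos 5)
You should first be aware that the paper itself contains no proof of Theorem \ref{th7}: the text immediately preceding it states the result and refers to \cite{PY, YSY} for the proof, so there is no in-paper argument to compare against. Judged on its own, your outline follows the standard route for such characterizations (localize the square-function characterization of $H^{t,p}$ to a dyadic cube, use vanishing moments to subtract means, treat coarse scales by an oscillation/gradient bound), but it contains one genuine gap and one incorrect claim.

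The gap is the transition, in the forward direction, from the coefficients of $h=(I-\Delta)^{t/2}f$ to the quantity actually appearing in the theorem. The square function in the statement is built from $2^{jt}f^{\varepsilon}_{j,k}$, i.e. from the coefficients of $f$ itself, whereas your argument manipulates $h^{\varepsilon}_{j,k}=\langle h,\Phi^{\varepsilon}_{j,k}\rangle$. For non-integer $t$ these two families are related by the matrix of $(I-\Delta)^{t/2}$ in the wavelet basis, which is only almost diagonal (the decay of Lemma \ref{le6}), hence non-local, and the restriction $Q_{j,k}\subset Q$ does not commute with it: $2^{jt}f^{\varepsilon}_{j,k}$ for $Q_{j,k}\subset Q$ receives contributions from the behaviour of $h$ arbitrarily far from $Q$. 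Controlling this leakage is the actual content of the proof; it needs the almost-diagonal estimate, a telescoping bound on $|h_{2^{m}\tilde Q}-h_{\tilde Q}|$ over dyadic annuli using the Morrey bound on expanding cubes, and summability of the resulting tails (this is where $0\le p(r+t)<n$ and the wavelet decay enter, and it is exactly the difficulty that motivates the paper's almost-local operator $T^{t}$ in \S 2). There is also no ready-made ``dyadic-cube restricted version of Lemma \ref{le2}(i)'': that lemma is global, and its localization is part of what must be proved. Declaring these contributions ``harmless boundary tails'' does not dispose of them; the same issue recurs in your $f_{2}$ analysis of the backward direction.

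The incorrect claim is your final sentence. The displayed hypothesis involves only $\varepsilon\in E_{n}$ and cubes $Q_{j,k}\subset Q$, so it carries no information about the scaling coefficients $f^{0}_{0,k}$: for instance $f=N\Phi^{0}_{0,0}$ satisfies the wavelet inequality with constant $0$ for every $N$, yet $\sup_{|Q|=1}|f_{t,Q}|$ is of order $N$ and the oscillation bound degrades like $N^{p}$. Hence the normalization $\sup_{|Q|=1}|f_{t,Q}|\le C$ cannot be ``controlled'' by invoking the wavelet hypothesis at unit scale; one must either impose a separate bound such as $\sup_{k}|f^{0}_{0,k}|\le C$ or read the equivalence modulo this normalization, as the cited sources effectively do.
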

By Lemmas \ref{le1} and \ref{le2}, the
multiplier spaces $X^{t}_{r,p}(\mathbb{R}^{n})$ are also subspaces of
Morrrey spaces $M^{t}_{r,p}(\mathbb{R}^{n})$.
\begin{lemma}\label{le3}
Given $r>0$, $t\geq0$ and $1<p<{n}/{(r+t)}$. If $f\in
X^{t}_{r,p}(\mathbb{R}^{n})$, then $f(x)\in
M^{t}_{r,p}(\mathbb{R}^{n})$.
\end{lemma}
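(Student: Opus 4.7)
The plan is to apply Lemma~\ref{le1}(i) with the test set $e=Q$ for each cube $Q$ with $|Q|\le 1$, bound the resulting capacities by volume via Lemma~\ref{le1}(ii), and then bridge from the Riesz potential characterization of $X^{t}_{r,p}(\mathbb{R}^{n})$ to the Bessel potential $(I-\Delta)^{t/2}$ used in the definition of $M^{t}_{r,p}(\mathbb{R}^{n})$.

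Fixing such a cube $Q$, Lemma~\ref{le1}(i) with $e=Q$ yields
$$\|(-\Delta)^{t/2}f\|_{L^{p}(Q)}\le C\|f\|_{X^{t}_{r,p}}\bigl(\mathrm{cap}(Q,H^{t+r,p})\bigr)^{1/p},$$
$$\|f\|_{L^{p}(Q)}\le C\|f\|_{X^{t}_{r,p}}\bigl(\mathrm{cap}(Q,H^{r,p})\bigr)^{1/p}.$$
Since $1<p<n/(r+t)$ also forces $1<p<n/r$, Lemma~\ref{le1}(ii) applied with exponents $r+t$ and $r$ gives $\mathrm{cap}(Q,H^{t+r,p})\le C|Q|^{1-p(r+t)/n}$ and $\mathrm{cap}(Q,H^{r,p})\le C|Q|^{1-pr/n}$. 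Combining produces the local bounds
$$\int_{Q}|(-\Delta)^{t/2}f(x)|^{p}dx\le C|Q|^{1-p(r+t)/n},\qquad \int_{Q}|f(x)|^{p}dx\le C|Q|^{1-pr/n}.$$

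Next I would convert from $(-\Delta)^{t/2}$ to $(I-\Delta)^{t/2}$. The Fourier multiplier $m(\xi)=(1+|\xi|^{2})^{t/2}-|\xi|^{t}$ has symbol order $t-2$; the associated convolution kernel decays rapidly at infinity and is locally integrable. Applied to $f$ on a slight enlargement of $Q$ and combined with the two local bounds above, this produces a correction term of the same or better scale $|Q|^{1-p(r+t)/n}$. Hence $\int_{Q}|(I-\Delta)^{t/2}f|^{p}dx\le C|Q|^{1-p(r+t)/n}$. The Morrey oscillation estimate now follows from Jensen's inequality, since $\int_{Q}|(I-\Delta)^{t/2}f-f_{t,Q}|^{p}dx\le 2^{p}\int_{Q}|(I-\Delta)^{t/2}f|^{p}dx$; and for $|Q|=1$ H\"older's inequality directly controls $|f_{t,Q}|$ by $\bigl(\int_{Q}|(I-\Delta)^{t/2}f|^{p}dx\bigr)^{1/p}\le C$.

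The main obstacle I anticipate is this Riesz-to-Bessel bridge: the kernel analysis of the difference $(I-\Delta)^{t/2}-(-\Delta)^{t/2}$ must be carried out carefully enough to preserve the scale $|Q|^{1-p(r+t)/n}$ uniformly in small cubes $Q$. Once that step is dispensed with, the argument reduces to the capacity-volume comparison already supplied by Lemma~\ref{le1}, and no wavelet or multi-resolution machinery is needed at this stage.
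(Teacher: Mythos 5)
Your proposal is correct in outline and follows the route the paper itself indicates: the paper offers no written proof of Lemma~\ref{le3}, only the remark that it follows from Lemmas~\ref{le1} and~\ref{le2}, and your use of Lemma~\ref{le1}(i) with $e=Q$ together with the capacity--volume bound of Lemma~\ref{le1}(ii) (applied with both exponents $r+t$ and $r$) is exactly the intended core. Where you genuinely diverge is in the final normalization step: the paper's implicit plan is to pass from the capacity estimates to the Morrey condition via the wavelet/Triebel--Lizorkin norm equivalences of Lemma~\ref{le2} and Theorem~\ref{th7}, which absorb the Riesz-versus-Bessel discrepancy into equivalent square-function norms, whereas you attack the difference $(I-\Delta)^{t/2}-(-\Delta)^{t/2}$ directly by kernel estimates. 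Your route is more self-contained but carries the burden you yourself flag: the near-field part of the correction kernel is handled by the bound $\int_{Q}|f|^{p}\le C|Q|^{1-pr/n}$, but the far-field contribution is nonlocal and must be controlled by summing the uniform local $L^{p}$ bounds over dyadic annuli; this works but should be written out, since $f$ need not lie in $L^{p}(\mathbb{R}^{n})$ globally. Also note that your claim that the correction kernel is locally integrable is only valid for $0\le t<2$ (for $t\ge 2$ the symbol $(1+|\xi|^{2})^{t/2}-|\xi|^{t}$ grows like $|\xi|^{t-2}$ and the operator is no longer of convolution type with an $L^{1}_{loc}$ kernel), whereas the lemma is stated for all $t\ge 0$; the wavelet route of Lemma~\ref{le2} avoids this case distinction, which is presumably why the authors prefer it.
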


 Now we give two
lemmas about the fractional BMO spaces  $BMO^{r}(\mathbb{R}^{n})$. In the first lemma, we prove that
Morrey spaces $M^{t}_{r, p}(\mathbb{R}^{n})$ are subspaces of
$BMO^{r}(\mathbb{R}^{n})$.
\begin{lemma}\label{le4}
For $r>0,\ t\geq0$ and $1<p<{n}/{(r+t)}$,
$M^{t}_{r,p}(\mathbb{R}^{n})\subset BMO^{r}(\mathbb{R}^{n})$.
\end{lemma}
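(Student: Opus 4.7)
The plan is to reduce to wavelet coefficients and compare the two square functions directly. Use Theorem \ref{th7} for $M^{t}_{r,p}(\mathbb{R}^{n})$ and Lemma \ref{le2}(ii) for $BMO^{r}(\mathbb{R}^{n}) = F^{-r,2}_{\infty}(\mathbb{R}^{n})$, the latter applied with smoothness index $-r$ in place of the generic $r$ there. Expand $f = \sum_{(\varepsilon,j,k) \in \Lambda_{n}} f^{\varepsilon}_{j,k} \Phi^{\varepsilon}_{j,k}$ in Meyer wavelets (which are admissible for both characterizations provided $m$ is large enough relative to $r+t$).

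The key observation is the algebraic identity
$$2^{j(n-2r)} = 2^{-2j(r+t)} \cdot 2^{j(n+2t)},$$
together with the geometric fact that for any dyadic cube $Q = Q_{J,K} \in \Omega$ with $|Q| = 2^{-Jn} \leq 1$, the condition $Q_{j,k} \subset Q$ forces $j \geq J$. Since $r + t > 0$, this gives the uniform estimate $2^{-2j(r+t)} \leq 2^{-2J(r+t)} = |Q|^{2(r+t)/n}$, which can therefore be factored out of the square sum.

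Combining this with the Morrey bound from Theorem \ref{th7} yields
$$\int_{Q} \Bigl(\sum_{\varepsilon \in E_{n},\, Q_{j,k} \subset Q} 2^{j(n-2r)} |f^{\varepsilon}_{j,k}|^{2} \chi(2^{j}x - k)\Bigr)^{p/2} dx \leq |Q|^{p(r+t)/n} \cdot C|Q|^{1 - p(r+t)/n} = C|Q|.$$
Taking $p$-th roots produces exactly the criterion of Lemma \ref{le2}(ii) with smoothness $-r$ for the chosen exponent $p \in (1, n/(r+t))$. Hence $f \in F^{-r,2}_{\infty}(\mathbb{R}^{n}) = BMO^{r}(\mathbb{R}^{n})$, as desired.

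The main obstacle is conceptual rather than technical: one must be careful to line up the smoothness index of the wavelet characterization of fractional BMO ($2^{j(n-2r)}$) with that of the Morrey space ($2^{j(n+2t)}$), and verify that the deficit $2(r+t)$ in the exponent is exactly compensated by the Morrey scaling factor $|Q|^{1-p(r+t)/n}$. Once this bookkeeping is done, the argument is a one-line dyadic computation using only $j \geq J$; the hypotheses $r > 0$, $t \geq 0$, $1 < p < n/(r+t)$ enter only to ensure $r+t > 0$ and that both wavelet characterizations are applicable.
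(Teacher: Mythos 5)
Your proposal is correct and follows essentially the same route as the paper: the paper's proof is precisely the one-line computation factoring $2^{j(n-2r)} = 2^{-2j(r+t)}\cdot 2^{j(n+2t)} \leq |Q|^{2(r+t)/n}\, 2^{j(n+2t)}$ for $Q_{j,k}\subset Q$, then applying the Morrey bound of Theorem \ref{th7} to obtain $C|Q|$ and invoking the wavelet characterization of $BMO^{r}(\mathbb{R}^{n})$ from Lemma \ref{le2}(ii). Your write-up makes the bookkeeping of the smoothness indices more explicit, but the argument is the same.
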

\begin{proof}
For any dyadic cube $Q$, we have
\begin{eqnarray*}
&&\int\left(\sum_{\varepsilon\in E_{n}, Q_{j,k}\subset
Q}~2^{jn-2jr}|f^{\varepsilon}_{j,k}|^{2}\chi(2^{j}x-k)\right)^{p/2}dx\\
&\leq&|Q|^{{p(r+t)}/{n}}\int\left(\sum_{\varepsilon\in E_{n},
Q_{j,k}\subset
Q}~2^{j(n+2t)}|f^{\varepsilon}_{j,k}|^{2}\chi(2^{j}x-k)\right)^{p/2}dx\\
&\leq&C|Q|^{{p(r+t)}/{n}}|Q|^{1-{p(r+t)}/{n}}\\
&\leq&C|Q|.
\end{eqnarray*}
\end{proof}
\begin{lemma}\label{le5}
Suppose $r>0$ and $\
f=\sum\limits_{(\varepsilon,j,k)\in\Lambda_{n}}f^{\varepsilon}_{j,k}\Phi^{\varepsilon}_{j,k}(x)\in
BMO^{r}(\mathbb{R}^{n})$. The wavelet coefficients of $f$
satisfy
$$|f^{\varepsilon}_{j,~k}|\leq C2^{(r-{n}/{2})j},\ \forall
\varepsilon\in\{0,1\}^{n},\ j\in \mathbb{N},\ k\in\mathbb{Z}^{n}.$$
\end{lemma}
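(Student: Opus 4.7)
The plan is to identify $BMO^{r}(\mathbb{R}^{n})$ with the Triebel--Lizorkin space $F^{-r,2}_{\infty}(\mathbb{R}^{n})$, as recalled in \S 2 of the excerpt, and then to pull the desired pointwise bound on each coefficient out of the wavelet square-function characterization of Lemma \ref{le2}(ii), applied with the exponent $-r$ in place of $r$.

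Concretely, since $r>0$ and the wavelet regularity $m$ is taken large, Lemma \ref{le2}(ii) (with $r \mapsto -r$) provides some $p \in (1, \infty)$ such that for every dyadic cube $Q \in \Omega$,
\begin{equation*}
\int_{Q} \biggl( \sum_{\varepsilon \in E_{n},\ Q_{j,k} \subset Q} 2^{j(n-2r)} |f^{\varepsilon}_{j,k}|^{2} \chi(2^{j} x - k) \biggr)^{p/2} dx \leq C |Q|.
\end{equation*}
To isolate a single coefficient $f^{\varepsilon_{0}}_{j_{0},k_{0}}$ with $\varepsilon_{0} \in E_{n}$, I would specialize this inequality to $Q = Q_{j_{0},k_{0}}$, whose measure is $2^{-j_{0}n}$. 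The triple $(\varepsilon_{0}, j_{0}, k_{0})$ is admissible because $Q_{j_{0},k_{0}} \subset Q$, and on $Q$ the indicator $\chi(2^{j_{0}} x - k_{0})$ is identically $1$. Discarding all other nonnegative summands yields
\begin{equation*}
|Q_{j_{0},k_{0}}| \cdot 2^{j_{0} p (n/2 - r)} |f^{\varepsilon_{0}}_{j_{0},k_{0}}|^{p} \leq C |Q_{j_{0},k_{0}}|,
\end{equation*}
which after cancellation and a $p$-th root delivers exactly $|f^{\varepsilon_{0}}_{j_{0},k_{0}}| \leq C 2^{j_{0}(r - n/2)}$.

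The remaining indices $(\varepsilon, j, k) = (0, 0, k)$ with $k \in \mathbb{Z}^{n}$ are not captured by Lemma \ref{le2}(ii); for these I would invoke the non-homogeneous BMO-type mean bound $\sup_{|Q|=1} |f_{Q}| < \infty$ recalled in \S 2 (equivalently, the uniform local $L^{1}$-control of $(I-\Delta)^{r/2} f$ at unit scale) and pair $f$ with the compactly supported scaling function $\Phi^{0}_{0,k}$ to get $|f^{0}_{0,k}| \leq C$, matching the claimed bound when $j = 0$.

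The main conceptual step is recognizing that the wavelet square-function condition defining $F^{-r,2}_{\infty}$, once tested on the unique dyadic cube supporting a prescribed wavelet, collapses onto a pointwise estimate for that single coefficient; the rest of the argument is essentially bookkeeping about support and monotonicity of the inner sum. No cancellation, duality or fractal construction is required at this step.
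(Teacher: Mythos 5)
Your treatment of the indices $\varepsilon\in E_{n}$ is correct and is essentially the paper's own argument: the paper likewise writes $BMO^{r}=F^{-r,2}_{\infty}$, tests the square-function bound of Lemma \ref{le2}(ii) on the single cube $Q_{j,k}$, obtaining $\int\bigl(2^{jn-2jr}|f^{\varepsilon}_{j,k}|^{2}\chi(2^{j}x-k)\bigr)^{p/2}dx\leq C2^{-jn}$, and reads off $|f^{\varepsilon}_{j,k}|\leq C2^{j(r-n/2)}$.

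There is, however, a genuine gap in your handling of $\varepsilon=0$. The lemma asserts the bound for \emph{all} $\varepsilon\in\{0,1\}^{n}$ and \emph{all} $j\in\mathbb{N}$, i.e.\ it includes the scaling-function coefficients $f^{0}_{j,k}=\langle f,\Phi^{0}_{j,k}\rangle$ at every scale $j$, not only at $j=0$. These are not indexed by $\Lambda_{n}$ (so they are invisible to the square-function characterization), but they are exactly what is used later: in the proof of Theorem \ref{th3} the terms $T_{1,\varepsilon}$ and $T_{1,1,\varepsilon,l}$ involve $f^{0}_{j,k+l}$ for arbitrary $j$, and the estimate there explicitly cites Lemma \ref{le5} to get $|f^{0}_{j,k+l}|\leq C2^{(r-n/2)j}$. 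Your appeal to $\sup_{|Q|=1}|f_{Q}|<\infty$ only covers $j=0$. The missing step is the paper's case (ii): since $\Phi^{0}_{j,k}\in V_{j}$ is orthogonal to $W_{j'}$ for $j'\geq j$, one has $f^{0}_{j,k}=\bigl\langle \sum_{j'<j}f^{\varepsilon'}_{j',k'}\Phi^{\varepsilon'}_{j',k'},\Phi^{0}_{j,k}\bigr\rangle$; the coefficient bound already proved for $\varepsilon'\in E_{n}$ gives the pointwise estimate $\bigl|\sum_{j'<j}f^{\varepsilon'}_{j',k'}\Phi^{\varepsilon'}_{j',k'}(x)\bigr|\leq C\sum_{j'<j}2^{rj'}\leq C2^{rj}$ (here $r>0$ is essential to sum the geometric series), and pairing with $\|\Phi^{0}_{j,k}\|_{L^{1}}\leq C2^{-jn/2}$ yields $|f^{0}_{j,k}|\leq C2^{j(r-n/2)}$. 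Without this step the lemma as stated, and its subsequent applications, are not established.
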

\begin{proof}
Take $j\in\mathbb{N}$ and $k\in\mathbb{Z}^{n}$. We consider two
cases $\varepsilon\in E_{n}$ and $\varepsilon=0$ separately.

(i) For any $\varepsilon\in E_{n}$, by the definition of
$BMO^{r}(\mathbb{R}^{n})$, we get
$$\int\left(2^{jn-2jr}|f^{\varepsilon}_{j,k}|^{2}\chi(2^{j}x-k)\right)^{p/2}dx\leq C2^{-jn}.$$
It is easy to see that $|f^{\varepsilon}_{j,k}|\leq
C2^{j(r-{n}/{2})}$.

(ii) For $\varepsilon=0$,
$$f^{0}_{j,k}=\left<\sum_{(\varepsilon', j', k')\in\Lambda_{n}}f^{\varepsilon'}_{j', k'}\Phi^{\varepsilon'}_{j',k'},\Phi^{0}_{j,k}\right>
=\left<\sum_{j'<j}f^{\varepsilon'}_{j',
k'}\Phi^{\varepsilon'}_{j',k'},\Phi^{0}_{j,k}\right>.$$ Because
$\left|\sum\limits_{j'<j}f^{\varepsilon'}_{j',
k'}\Phi^{\varepsilon'}_{j',k'}(x)\right|\leq C2^{rj}$, we have
$|f^{0}_{j,k}|\leq C\left<2^{rj}, |\Phi^{0}_{j,k}(x)|\right>\leq
C2^{j(r-{n}/{2})}.$
\end{proof}

 Let $\Psi^{1}$ and $\Psi^{2}$ be two functions
in $C^{\mu}_{0}([-2^{M+1}, 2^{M+1}]^{n})$ with vanishing moments
$\int x^{\alpha}\Psi^{i}(x)dx=0$,  where $|\alpha|\leq \mu$ and
$i=1,2.$
Denote
$$a_{j,k, j',k'}=\left<\Psi^{1}_{j,k},\ \Psi^{2}_{j', k'}\right>.$$
The following lemma can be found in Chapter 8 of \cite{M} or Chapter
6 of \cite{Y1}.
\begin{lemma}\label{le6}
Given $|\mu|\leq m$. For $|s|<\mu$, the coefficients $a_{j,k,j',k'}$
satisfy the following condition:
\begin{equation}\label{eq8}
|a_{j,k,j',k'}|\leq C2^{-|j-j'|({n}/{2}+s)}\left(\frac{2^{-j}+2^{-j'}}{2^{-j}+2^{-j'}+|k2^{-j}-k'2^{-j'}|}\right)^{n+s}.
\end{equation}
\end{lemma}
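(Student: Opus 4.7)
The plan is a standard almost-orthogonality argument that decouples the scale decay from the center geometry. By the symmetry of the claim under $(\Psi^1,j,k) \leftrightarrow (\Psi^2,j',k')$, it suffices to treat $j \leq j'$. Because $\Psi^i \in C^\mu_0([-2^{M+1},2^{M+1}]^n)$, the support of $\Psi^i_{j,k}$ lies in a cube of side at most $2^{M+2}\,2^{-j}$ centered at $k2^{-j}$; hence $a_{j,k,j',k'} = 0$ unless $|k2^{-j}-k'2^{-j'}| \leq 2^{M+1}(2^{-j}+2^{-j'})$, and in that overlapping regime the geometric factor on the right-hand side of the desired estimate is bounded below by a positive constant depending only on $n, s, M$. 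Thus it is enough to prove the scale-only bound
\begin{equation*}
|a_{j,k,j',k'}| \leq C\, 2^{-(j'-j)(n/2+s)} \qquad \text{for } j \leq j'.
\end{equation*}

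For this I would exploit the vanishing moments of $\Psi^2_{j',k'}$. Set $x_0 = k'2^{-j'}$ and Taylor-expand
\begin{equation*}
\Psi^1_{j,k}(x) = \sum_{|\alpha|\leq \mu-1}\frac{(x-x_0)^\alpha}{\alpha!}\,D^\alpha\Psi^1_{j,k}(x_0) + R(x),
\end{equation*}
where the integral form of the remainder gives $|R(x)| \leq C|x-x_0|^\mu \sup|D^\mu \Psi^1_{j,k}|$. The polynomial part has degree $\mu-1 \leq \mu$, so pairing it with $\Psi^2_{j',k'}$ produces zero by the hypothesis $\int x^\alpha \Psi^2(x)\,dx = 0$ for $|\alpha|\leq \mu$. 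Hence $a_{j,k,j',k'} = \int R(x)\Psi^2_{j',k'}(x)\,dx$. Inserting the scale bounds $\sup|D^\mu \Psi^1_{j,k}| \leq C\, 2^{j(n/2+\mu)}$, $|x-x_0| \leq C\, 2^{-j'}$ on $\operatorname{supp}\Psi^2_{j',k'}$, and $\|\Psi^2_{j',k'}\|_{L^1} \leq C\, 2^{-j'n/2}$, I arrive at
\begin{equation*}
|a_{j,k,j',k'}| \leq C\cdot 2^{-j'\mu}\cdot 2^{j(n/2+\mu)}\cdot 2^{-j'n/2} = C\, 2^{-(j'-j)(n/2+\mu)},
\end{equation*}
and the hypothesis $s < \mu$ then upgrades this to $C\, 2^{-|j-j'|(n/2+s)}$, as required. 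The degenerate case $j=j'$ is handled directly by Cauchy--Schwarz since $\|\Psi^i_{j,k}\|_{L^2} = \|\Psi^i\|_{L^2}$.

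The only real subtlety is matching the Taylor order to the hypotheses: $C^\mu$ regularity supports a Taylor polynomial only of degree $\mu-1$ with remainder controlled by $D^\mu$, and the vanishing-moment assumption $|\alpha|\leq \mu$ is calibrated exactly to annihilate this polynomial; neither ingredient can be weakened. The strict inequality $|s|<\mu$ is what allows the clean decay rate $n/2+\mu$ coming out of the Taylor estimate to absorb the weaker target $n/2+s$. All remaining steps are routine bookkeeping of the $2^{jn/2}$ normalization factors and the Jacobian of the dyadic dilation, so no further obstacle appears.
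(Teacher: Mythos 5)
Your proof is correct. Be aware, however, that the paper does not actually prove this lemma: it is quoted from Chapter 8 of Meyer's \emph{Ondelettes et op\'erateurs} (or Chapter 6 of Yang's book), so any proof you supply is necessarily a self-contained substitute rather than a variant of an in-paper argument. Your route is the standard almost-orthogonality computation, and every step checks out for the lemma \emph{as stated}, i.e.\ for compactly supported $\Psi^{1},\Psi^{2}$: compact support forces $a_{j,k,j',k'}=0$ unless $|k2^{-j}-k'2^{-j'}|\lesssim 2^{-j}+2^{-j'}$, on which region the geometric factor is indeed bounded below by a constant (for either sign of $n+s$, since the base is at most $1$), so only the scale-only bound remains; Taylor-expanding the coarse-scale function to degree $\mu-1$ about $k'2^{-j'}$, killing the polynomial with the order-$\mu$ vanishing moments of the fine-scale function, and combining $\sup|D^{\mu}\Psi^{1}_{j,k}|\le C2^{j(n/2+\mu)}$, $|x-x_{0}|\le C2^{-j'}$ and $\|\Psi^{2}_{j',k'}\|_{L^{1}}\le C2^{-j'n/2}$ gives exactly $C2^{-(j'-j)(n/2+\mu)}\le C2^{-(j'-j)(n/2+s)}$; the normalization bookkeeping is right. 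The only point worth flagging is what your simplification gives up: in Meyer's treatment the estimate is proved for functions with merely polynomial decay (e.g.\ Meyer wavelets, which are not compactly supported), where the factor $\bigl(\frac{2^{-j}+2^{-j'}}{2^{-j}+2^{-j'}+|k2^{-j}-k'2^{-j'}|}\bigr)^{n+s}$ carries genuine content, the integral must be split into near and far regions, and the lower half of the hypothesis $|s|<\mu$ (i.e.\ $s>-\mu$) is actually used. Your argument is a complete and more elementary proof of the compactly supported statement given here, but it does not recover that more general version.
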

By wavelet characterization of $H^{r, p}(\mathbb{R}^{n})$, the
continuity of Calder\'{o}n-Zygmund operators on
$H^{r,p}(\mathbb{R}^{n})$ is equivalent to the following lemma.  We
refer the readers to  \cite{M, MY, Y1} for the proof.
\begin{lemma}\label{le7}
Suppose $s>|r|$ and $g(x)=\sum\limits_{(\varepsilon,
j,k)\in\Lambda_{n}}g^{\varepsilon}_{j,k}\Phi^{\varepsilon}_{j,k}(x)\in
H^{r, p}(\mathbb{R}^{n})$. Let
$\tilde{g}^{\varepsilon}_{j,k}=\sum\limits_{(\varepsilon, j,
k)\in\Lambda_{n}}a^{\varepsilon,\varepsilon'}_{j,k,j',k'}g^{\varepsilon}_{j',k'}$.
If the coefficients $a^{\varepsilon,\varepsilon'}_{j,k,j',k'}$
satisfy the condition (\ref{eq8}), then we have
\begin{eqnarray*}
&&\int\left(\sum_{(\varepsilon,j,k)\in\Lambda_{n}}2^{j(n+2r)}|\tilde{g}^{\varepsilon}_{j,k}|^{2}\chi(2^{j}-k)\right)^{p/2}dx
\leq
C\int\left(\sum_{(\varepsilon,j,k)\in\Lambda_{n}}2^{j(n+2r)}|g^{\varepsilon}_{j,k}|^{2}\chi(2^{j}-k)\right)^{p/2}dx.
\end{eqnarray*}
\end{lemma}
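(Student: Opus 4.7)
The plan is to reduce the inequality to the vector-valued Fefferman--Stein maximal inequality via the standard almost-diagonal argument for wavelet coefficients. By Lemma \ref{le2}(i), both sides of the asserted inequality are equivalent to $L^p$-norms of wavelet square functions, so it suffices to compare
$$S(\tilde g)(x) := \Big(\sum_{(\varepsilon,j,k)\in\Lambda_n} 2^{j(n+2r)}|\tilde g^{\varepsilon}_{j,k}|^2\chi(2^jx-k)\Big)^{1/2}$$
with its analogue $S(g)$. Setting $G^{\varepsilon'}_{j',k'} := 2^{j'(r+n/2)}|g^{\varepsilon'}_{j',k'}|$ and $F^{\varepsilon'}_{j'}(x) := \sum_{k'}G^{\varepsilon'}_{j',k'}\chi(2^{j'}x-k')$, we have $S(g) = \bigl(\sum_{\varepsilon',j'}|F^{\varepsilon'}_{j'}|^2\bigr)^{1/2}$.

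Plugging (\ref{eq8}) into the definition of $\tilde g^{\varepsilon}_{j,k}$ and factoring out $2^{j'(r+n/2)}$ yields the pointwise inequality
$$2^{j(r+n/2)}|\tilde g^{\varepsilon}_{j,k}| \leq C\sum_{\varepsilon',j'} 2^{(j-j')(r+n/2) - |j-j'|(n/2+s)}\sum_{k'}\left(\frac{\max(2^{-j},2^{-j'})}{\max(2^{-j},2^{-j'}) + |k2^{-j}-k'2^{-j'}|}\right)^{n+s}G^{\varepsilon'}_{j',k'}.$$
The scalar prefactor evaluates to $2^{-(j-j')(s-r)}$ when $j' \leq j$ and to $2^{-(j'-j)(n+r+s)}$ when $j' > j$, and the hypothesis $s > |r|$ guarantees that both exponents are strictly positive. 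The spatial sum, for each fixed $(j,j',\varepsilon')$, is a discrete convolution against a rapidly decaying approximate identity; a direct computation shows that for every $x \in Q_{j,k}$ it is bounded by $CM(F^{\varepsilon'}_{j'})(x)$ when $j'\leq j$ and by $C2^{(j'-j)n}M(F^{\varepsilon'}_{j'})(x)$ when $j' > j$, where $M$ is the Hardy--Littlewood maximal operator. The extra factor $2^{(j'-j)n}$ in the latter case arises because the finer lattice $\{k'2^{-j'}\}$ packs $2^{(j'-j)n}$ sites into a cube of side $2^{-j}$.

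After combining the scalar and spatial estimates, the apparent loss $2^{(j'-j)n}$ in the fine-to-coarse regime is absorbed by the $2^{-(j'-j)(n+r+s)}$ decay, leaving a net geometric weight $2^{-|j-j'|\sigma}$ with $\sigma := \min(s-r,\ s+r) > 0$. Hence, for every $x$,
$$S(\tilde g)(x) \leq C\Big(\sum_{\varepsilon,j}\Big|\sum_{\varepsilon',j'}2^{-|j-j'|\sigma}M(F^{\varepsilon'}_{j'})(x)\Big|^2\Big)^{1/2}.$$
A Cauchy--Schwarz (Schur) estimate in $j'$ absorbs the geometric weights and reduces the right-hand side to $C\bigl(\sum_{\varepsilon',j'}|M(F^{\varepsilon'}_{j'})|^2\bigr)^{1/2}$, and the Fefferman--Stein vector-valued maximal inequality, valid for $1 < p < \infty$, finally yields $\|S(\tilde g)\|_{L^p} \leq C\|S(g)\|_{L^p}$. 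The main obstacle I expect is the bookkeeping of the spatial kernel in the fine-to-coarse regime $j' > j$: one must verify that the lattice-density loss $2^{(j'-j)n}$ is exactly compensated by the extra decay of the scalar prefactor, a cancellation that is the entire reason the hypothesis $s > |r|$ (rather than merely $s > r$) is needed.
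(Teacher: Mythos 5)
Your argument is correct and complete: the reduction of condition (\ref{eq8}) to the weights $2^{-(j-j')(s-r)}$ (coarse-to-fine) and $2^{-(j'-j)(n+r+s)}$ (fine-to-coarse), the lattice-density factor $2^{(j'-j)n}$, the Schur estimate in $j'$, and the Fefferman--Stein vector-valued maximal inequality are exactly the standard almost-diagonal proof of $H^{r,p}$-boundedness that the paper, which gives no proof of its own and instead cites \cite{M, MY, Y1}, is relying on. In particular you correctly identify why the full hypothesis $s>|r|$ (both $s>r$ and $s>-r$) is needed, so nothing further is required.
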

We say that $T$ is a local operator if there exists some constant
$C>1$ such that for all $x\in \mathbb{R}^{n}$ and $r>0$, $T$ maps a
distribution with the support $B(x,r)$ to another distribution
supported on the ball $B(x, Cr)$. If ${t}/{2}$ is not a non-negative
integer,  the operator $(I-\Delta)^{t/2}$ is not a local operator.
Now we use wavelets to construct some special fractional
differential operators $T^{t}$, which are almost local operators and
will be used in the proof of our main result.
\begin{definition}\label{def8}
For $t\geq0$ and $h(x)=\sum\limits_{(\varepsilon, j,
k)\in\Lambda_{n}}h^{\varepsilon}_{j,k}\Phi^{\varepsilon}_{j,k}(x),$
 we define an operator $T^{t}$ corresponding to the kernel
$K^{t}(x,y)=\sum\limits_{(\varepsilon,j,k)\in\Lambda_{n}}2^{-jt}\Phi^{\varepsilon}_{j,k}(x)\Phi^{\varepsilon}_{j,k}(y)$
as
$$T^{t}h(x)=\sum\limits_{(\varepsilon,j,k)\in\Lambda_{n}}2^{-jt}h^{\varepsilon}_{j,k}\Phi^{\varepsilon}_{j,k}(x).$$
\end{definition}
It is easy to prove that $T^{0}$ is the identity operator and
$\|T^{t}h\|_{L^{p}}=\|h\|_{H^{-t,p}}$ for $1<p<\infty$. Furthermore,
we have
\begin{lemma}\label{le8}
Suppose $t\geq 0$. For any $Q_{j,k}\in \Omega$ and $x\in Q_{j,k}$,
 $2^{j({n}/{2}-t)}|h^{0}_{j,k}|\leq CMT^{t}h(x),$ where
$M$ is the Hardy-Littlewood maximal operator.
\end{lemma}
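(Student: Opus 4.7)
The plan is to express $h^0_{j,k}=\langle h,\Phi^0_{j,k}\rangle$ as a weighted combination of inner products $\langle P_{j'}g,\Phi^0_{j,k}\rangle$ where $g:=T^t h$, and then dominate each such inner product by $Mg(x)$ via the standard maximal-function bound for multiresolution projections together with the localization of $\Phi^0_{j,k}$. The case $t=0$ is the familiar Hardy--Littlewood estimate, so I restrict attention to $t>0$ and $j\ge 1$.

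Inverting $T^t$ formally on the wavelet basis, $h^\varepsilon_{j',k'}=2^{j't}g^\varepsilon_{j',k'}$ for $\varepsilon\in E_n$ and $h^0_{0,k'}=g^0_{0,k'}$. Since $\Phi^0_{j,k}\in V_j$ is orthogonal to every $W_{j'}$ with $j'\ge j$, only the low frequencies of $g$ survive:
$$h^0_{j,k}=\langle P_0 g,\Phi^0_{j,k}\rangle+\sum_{j'=0}^{j-1}2^{j't}\,\langle Q_{j'}g,\Phi^0_{j,k}\rangle.$$
Writing $Q_{j'}=P_{j'+1}-P_{j'}$ and applying Abel summation (using $\langle P_j g,\Phi^0_{j,k}\rangle=g^0_{j,k}$ because $\Phi^0_{j,k}\in V_j$) yields
$$h^0_{j,k}=2^{(j-1)t}g^0_{j,k}-(1-2^{-t})\sum_{j'=1}^{j-1}2^{j't}\,\langle P_{j'}g,\Phi^0_{j,k}\rangle.$$
Thus it suffices to estimate $|\langle P_{j'}g,\Phi^0_{j,k}\rangle|$ uniformly in $1\le j'\le j$.

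For each such inner product I would combine the pointwise bound $|P_{j'}g(y)|\le CMg(y)$ with the observation that $\Phi^0_{j,k}$ is essentially concentrated in $B(2^{-j}k,C2^{-j})$ with $\|\Phi^0_{j,k}\|_{L^1}\le C2^{-jn/2}$. For $j'\le j$ and $y$ in this ball, the averaging ball $B(y,C2^{-j'})$ controlling $P_{j'}g(y)$ is contained in $B(x,C'2^{-j'})$ for any $x\in Q_{j,k}$, so the local bound is upgraded to $|P_{j'}g(y)|\le CMg(x)$. Hence $|\langle P_{j'}g,\Phi^0_{j,k}\rangle|\le C2^{-jn/2}Mg(x)$. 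Substituting into the identity above and using $\sum_{j'=1}^{j-1}2^{j't}\le C2^{jt}$ gives $|h^0_{j,k}|\le C2^{jt-jn/2}MT^t h(x)$, which is the stated inequality. The main obstacle is the last pointwise upgrade from $Mg(y)$ to $Mg(x)$, which is precisely where the restriction $j'\le j$ is used and which justifies calling $T^t$ an \emph{almost local} operator; for Meyer wavelets one merely replaces the compact-support argument for $\Phi^0$ by its rapid decay, with no structural change.
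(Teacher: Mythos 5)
Your argument is correct, and it reaches the conclusion by a genuinely different route from the paper's. The paper works on the spatial side: it writes $h^{0}_{j,k}=\langle T^{t}h,\ T^{-t}\Phi^{0}_{j,k}\rangle$, asserts the pointwise kernel decay $|T^{-t}\Phi^{0}(x)|\leq C(1+|x|)^{-n-t}$, and then runs the standard annular decomposition of $\int |T^{t}h(x)|\,2^{jn}(1+|2^{j}x-k|)^{-n-t}dx$ against the maximal function, the sum over annuli converging because $t>0$ (the factor $\sum_{l}2^{-lt}$). You instead keep the frequency decomposition explicit: you invert $T^{t}$ on the wavelet coefficients, use the orthogonality $\Phi^{0}_{j,k}\perp W_{j'}$ for $j'\geq j$ to truncate to low frequencies, Abel-sum the telescoping $Q_{j'}=P_{j'+1}-P_{j'}$, and then bound each $\langle P_{j'}g,\Phi^{0}_{j,k}\rangle$ by $C2^{-jn/2}Mg(x)$ using the standard domination of MRA projections by the maximal function together with the locality $j'\leq j$; here the role of $t>0$ is played by $\sum_{j'<j}2^{(j'-j)t}<\infty$. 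The two proofs are morally dual: the paper's kernel estimate for $T^{-t}\Phi^{0}$ is exactly the spatial repackaging of your weighted sum over scales. What your version buys is that it avoids having to verify the decay of $T^{-t}\Phi^{0}$ (which the paper only claims is "easy to verify"), replacing it by an elementary identity plus the well-known bound $|P_{j'}g|\leq CMg$; the cost is a longer combinatorial setup, and for Meyer wavelets you do still need the tail estimate $Mg(y)\leq C(1+2^{j'}|x-y|)^{n}Mg(x)$ absorbed by the rapid decay of $\Phi^{0}_{j,k}$, which you correctly flag. Both proofs correctly isolate $t>0$ as the only case needing work and defer $t=0$ to Meyer.
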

\begin{proof}
If $t=0$, the proof was given by Meyer \cite{M}. Now we consider the
case $t>0$. Since $T^{-t}\Phi^{0}(x)=\int K^{-t}(x,y)\Phi^{0}(y)dy$,
it is easy to verify that
$$|T^{-t}\Phi^{0}(x)|\leq C(1+|x|)^{-n-t}.$$

By the fact that $t>0$, we have
$$2^{j({n}/{2}-t)}h^{0}_{j,k}=2^{j({n}/{2}-t)}\left<T^{t}h(x),\ T^{-t}\Phi^{0}_{j,k}(x)\right>
=2^{{jn}/{2}}\left<T^{t}h(x),\ (T^{-t}\Phi^{0})_{j,k}(x)\right>.$$
Hence we can get
\begin{eqnarray*}
|2^{j({n}/{2}-t)}h^{0}_{j,k}|&=&2^{{jn}/{2}}\left|\left<T^{t}h(x),\
(T^{-t}\Phi^{0})_{j,k}(x)\right>\right|\\
&\leq&C2^{{jn}/{2}}\int
|T^{t}h(x)|2^{{jn}/{2}}\frac{dx}{(1+|2^{j}x-k|)^{n+t}}\\
&\leq&C2^{jn}\left(\int_{|2^{j}x-k|\leq1}|T^{t}h(x)|dx
+\sum_{l=1}^{\infty}\int_{2^{l-1}<\leq|2^{j}x-k|\leq2^{l}}|T^{t}h(x)|\frac{dx}{(1+|2^{j}x-k|)^{n+t}}\right)\\
&\leq&C2^{jn}\left(2^{-jn}M(T^{t}h)(x)+\sum^{\infty}_{l=1}2^{-lt}M(T^{t}h)(x)2^{-jn}\right)\\
&\leq&CM(T^{t})h(x).
\end{eqnarray*}
This completes the proof of Lemma \ref{le8}.
\end{proof}

In the rest of this section,  we give a decomposition of Sobolev
spaces associated with combination atoms. For $|r|<m$ and
$g(x)=\sum\limits_{(\varepsilon,j,k)\in\Lambda_{n}}g^{\varepsilon}_{j,k}\Phi^{\varepsilon}_{j,k}(x)$,
denote
$$S_{r}g(x)=\left(\sum_{(\varepsilon,j,k)\in\Lambda_{n}}2^{j(2r+n)}|g^{\varepsilon}_{j,k}|^{2}\chi(2^{j}x-k)\right)^{1/2}$$
and for $t=0$, denote also $Sg(x)=S_{0}g(x)$.
\begin{definition}\label{def4}
Given $r\in\mathbb{R},\ \lambda>0$. For arbitrary measurable set
$E$, we say that $g(x)$ is a $(r,\lambda,E)-$combination atom, if
$\text{ supp}(S_{r}g)\subset E$ and $S_{r}g(x)\leq\lambda$. If $E$
is a dyadic cube, then $g(x)$ is called a $(r,\lambda, E)-$atom.
\end{definition}
In \cite{Y2}, Q. Yang introduced the combination atom
decomposition of Lebesgue spaces. In this paper, we need a similar result for
Sobolev spaces.
\begin{theorem}\label{th2}
If $1<p<\infty$, $|r|<m$ and $\|g\|_{H^{r,p}}\leq 1$,  there exists
a series of $(r, 2^{v}, E_{v})$-combination atoms $g_{v}(x)$ such
that $\sum\limits_{v\in\mathbb{N}}2^{pv}|E_{v}|\leq C$.
\end{theorem}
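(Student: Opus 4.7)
The plan is to run a Calder\'on--Zygmund stopping-time construction on the wavelet expansion of $g$, using the square function $S_r g$ to read off the levels. Normalizing $\|g\|_{H^{r,p}}\le 1$ and invoking Lemma \ref{le2}, I have $\|S_r g\|_{L^p(\mathbb{R}^n)}\le C$. For each $v\in\mathbb{N}$ introduce
\[
\Omega_v = \{x\in\mathbb{R}^n : S_r g(x) > 2^v\}, \qquad \tilde{\Omega}_v = \{x : M\chi_{\Omega_v}(x) > c_n\},
\]
where $M$ is the Hardy--Littlewood maximal operator and $c_n$ is a small dimensional constant. The weak-type $(1,1)$ bound gives $|\tilde{\Omega}_v|\le C|\Omega_v|$, and the layer-cake identity produces
\[
\sum_{v\in\mathbb{N}} 2^{pv}|\tilde{\Omega}_v| \le C\sum_{v\in\mathbb{N}} 2^{pv}|\Omega_v| \le C\|S_r g\|_{L^p}^p \le C,
\]
so the choice $E_v:=\tilde{\Omega}_v$ already supplies the quantitative bound $\sum_v 2^{pv}|E_v|\le C$.

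To build the atoms I partition the wavelet index set $\Lambda_n$ by a dyadic stopping time. For each $\lambda=(\varepsilon,j,k)\in\Lambda_n$ let $v(\lambda)$ be the largest $v\ge 0$ with $Q_{j,k}\subset \tilde{\Omega}_v$ (and $v(\lambda):=0$ if $Q_{j,k}\not\subset\tilde{\Omega}_0$), set $B_v=\{\lambda:v(\lambda)=v\}$, and define
\[
g_v(x) = \sum_{\lambda\in B_v} g^{\varepsilon}_{j,k}\Phi^{\varepsilon}_{j,k}(x).
\]
Then $g=\sum_{v\ge 0}g_v$, and for every $\lambda\in B_v$ with $v\ge 1$ the containment $Q_\lambda\subset \tilde{\Omega}_v$ places $\mathrm{supp}(S_r g_v)\subset \tilde{\Omega}_v=E_v$ up to a fixed enlargement, which can be absorbed by slightly lowering $c_n$.

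The heart of the proof is the pointwise estimate $S_r g_v(x)\le C\, 2^v$. For $x\notin\Omega_{v+1}$ it is immediate, since $g_v$ is a partial wavelet sum of $g$ and thus $S_r g_v(x)\le S_r g(x)\le 2^{v+1}$. When $x\in\Omega_{v+1}$ the argument is subtler: the dyadic cubes containing $x$ form a totally ordered nested chain, so among those $Q_\lambda$ with $\lambda\in B_v$ and $x\in Q_\lambda$ there is a unique smallest one $Q_{\lambda^*}$. Since $\lambda^*\in B_v$ means $Q_{\lambda^*}\not\subset \tilde{\Omega}_{v+1}$, the choice of the threshold $c_n$ in the definition of $\tilde{\Omega}_{v+1}$ forces $|Q_{\lambda^*}\cap \Omega_{v+1}^c|>0$, and I can pick $y\in Q_{\lambda^*}\cap\Omega_{v+1}^c$ with $S_r g(y)\le 2^{v+1}$. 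By nesting, $y$ lies in every $Q_\lambda\supset Q_{\lambda^*}$ with $\lambda\in B_v$, and therefore
\[
S_r g_v(x)^2 = \sum_{\substack{\lambda\in B_v\\ x\in Q_\lambda}} 2^{j(2r+n)}|g^{\varepsilon}_{j,k}|^2 \le S_r g(y)^2 \le 2^{2(v+1)}.
\]
Producing this simultaneously good $y$ for all relevant $\lambda$ is the main technical obstacle, and it is precisely the nested dyadic geometry of $\{Q_{j,k}\}$ that makes it possible. Once the pointwise bound is established each $g_v$ is an $(r,C2^v,E_v)$-combination atom, and an additive shift $v\mapsto v+\lceil\log_2 C\rceil$ absorbs the constant to give the stated decomposition with $\sum_v 2^{pv}|E_v|\le C$.
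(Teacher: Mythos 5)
Your proof is correct and follows essentially the same stopping-time construction as the paper: level sets of the square function, dyadic cubes selected at generation $v$ when contained in the $v$-th (enlarged) level set but not in the $(v+1)$-st, and the bound $S_{r}g_{v}\le C2^{v}$ obtained from a point $y$ outside $\Omega_{v+1}$ that lies in every selected cube through $x$. The only cosmetic difference is that the paper enlarges the square function itself (using $\tilde{\chi}$ in $\tilde{S}_{r}g$) where you enlarge the level sets via the Hardy--Littlewood maximal operator, and you make explicit the key pointwise estimate that the paper leaves implicit.
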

\begin{proof}
Denote
$$\tilde{S}_{r}g(x)=\left(\sum_{(\varepsilon,j,k)\in\Lambda_{n}}2^{j(2r+n)}|g^{\varepsilon}_{j,k}|^{2}\tilde{\chi}(2^{j}-k)\right)^{1/2}.$$
For  $v\geq 1$, let $E_{v}=\left\{x:
\tilde{S}_{r}g(x)>2^{v}\right\}$. By wavelet characterization
of Sobolev spaces, we have
$\sum\limits_{v\in\mathbb{N}}2^{pv}|E_{v}|\leq C$. Let
$E_{v}=\bigcup\limits_{l}Q^{v,l}$, where $Q^{v,l}$ are disjoint
maximal dyadic cubes with $|Q^{v,l}|\leq 1$. Let
$\mathfrak{F}_{v,l}$ be the set of dyadic cubes contained in
$Q^{v,l}$ but not in $E_{v+1}$,
$\mathfrak{F}_{v}=\bigcup\limits_{l}\mathfrak{F}_{v,l}$ and
$\mathfrak{F}_{0}=\Omega\backslash\bigcup\limits_{v\geq1}\mathfrak{F}_{v}$.
Let $E_{0}=\{x\in Q, Q\in\mathfrak{F}_{0}\}$ and we can write also
$E_{0}=\bigcup\limits_{l}Q^{0,l}$, where $Q^{0,l}$ are disjoint
maximal dyadic cubes in $\Omega$. The  related set $\mathfrak{F}_{0,l}$ is defined as
$\mathfrak{F}_{0,l}=\left\{Q\subset Q^{0,l} \mbox
{ and }
Q\in \mathfrak{F}_{0}\right\}
$.

For any $v\geq 0$, we write
$g_{v,l}(x)=\sum\limits_{Q_{j,k}\in\mathfrak{F}_{v,l}}g^{\varepsilon}_{j,k}\Phi^{\varepsilon}_{j,k}(x)$
and
$g_{v}(x)=\sum\limits_{Q_{j,k}\in\mathfrak{F}_{v}}g^{\varepsilon}_{j,k}\Phi^{\varepsilon}_{j,k}(x)$.
Then $g_{v}(x)$ is a desired combination atom. This completes the proof.
\end{proof}

\section{Wavelet characterization of the multiplier spaces}

In this section, we use Meyer wavelets to characterize the
multiplier spaces $X^{t}_{r,p}(\mathbb{R}^{n})$. For any $ g\in
H^{t,p}(\mathbb{R}^{n})$, let
$g^{\Phi,\varepsilon}_{j,k}=\left<g(x),\
2^{jn}(\Phi^{\varepsilon})^{2}(2^{j}x-k)\right>$. Let $\Phi(x)$ be a
function satisfying $\Phi(x)\geq 0,\ \Phi(x)\in
C^{\infty}_{0}(B(0,1))$ and $\int\Phi(x)dx=1$. For any $ g\in
H^{t,p}(\mathbb{R}^{n})$, define $g^{\Phi}_{j,k}=\left< g(x),\
2^{jn}\Phi(2^{j}x-k)\right>$. The function spaces
$S^{t}_{r,p}(\mathbb{R}^{n})$ and $S^{\Phi, t}_{r,
p}(\mathbb{R}^{n})$ are defined as follows.
\begin{definition}\label{def5}
Given $r>0$, $t\geq 0$ and $r+t<1<p<{n}/{(r+t)}$.

(i) We say $f(x)\in S^{t}_{r,p}(\mathbb{R}^{n})$ if
$f(x)=\sum\limits_{(\varepsilon,j,k)\in \Lambda_{n}
}f^{\varepsilon}_{j,k}\Phi^{\varepsilon}_{j,k}(x)$ and
$$\int\left(\sum\limits_{(\varepsilon,j,k)\in\Lambda_{n}}2^{j(n+2t)}|g^{\Phi,\varepsilon}_{j,k}|^{2}|f^{\varepsilon}_{j,k}|^{2}
\chi(2^{j}x-k)\right)^{p/2}dx\leq C,$$ where $ g\in H^{t+r,
p}(\mathbb{R}^{n})$ and $ \|g\|_{H^{r+t,p}(\mathbb{R}^{n})}\leq1.$

(ii) We say $f(x)\in S^{\Phi, t}_{r,p}(\mathbb{R}^{n})$ if
$f(x)=\sum\limits_{(\varepsilon,j,k)\in
\Lambda_{n}}~f^{\varepsilon}_{j,k}\Phi^{\varepsilon}_{j,k}(x)$ and
$$\int\left(\sum_{(\varepsilon,j,k)\in\Lambda_{n}}2^{j(n+2t)}|g^{\Phi}_{j,k}|^{2}|f^{\varepsilon}_{j,k}|^{2}
\chi(2^{j}x-k)\right)^{p/2}dx\leq C,$$ where $g\in
H^{r+t,p}(\mathbb{R}^{n})$ and
$\|g\|_{H^{r+t,p}(\mathbb{R}^{n})}\leq 1$.
\end{definition}
Now we give a  wavelet characterization of the multiplier space
$X^{t}_{r,p}(\mathbb{R}^{n})$. Let $\Phi^{0}(x)$ and
$\Phi^{\varepsilon}(x),\ \varepsilon\in E_{n}$ be the scaling
function and wavelet functions, respectively. For $(\varepsilon, j,
k)$, $(\varepsilon', j', k')$, $(\varepsilon", j', k')\in
\Lambda_{n}$ and $l\in\mathbb{Z}^{n}$, let
$$a^{\varepsilon, \varepsilon'}_{j,k, l, j', k'}=\left<\Phi^{0}_{j,k+l}(x)\Phi^{\varepsilon}_{j,k}(x),\ \Phi^{\varepsilon'}_{j',k'}(x)\right>$$
and
$$a^{\varepsilon, \varepsilon, \varepsilon'', 0}_{j,k, 0, j',
k'} =\left<(\Phi^{\varepsilon}_{j,k})^{2}-2^{jn}\Phi(2^{j}x-k),\ \Phi^{\varepsilon''}_{j',k'}(x)\right>.$$

Furthermore, for $0\leq s\leq N$, $\varepsilon'\in E_{n}$, $l\in \mathbb{Z}^{n}$
and $s+|\varepsilon-\varepsilon'|+|l|\neq0$, let
$$a^{\varepsilon, \varepsilon', \varepsilon'', s}_{j,k, l, j',
k'} =\left<\Phi^{\varepsilon'}_{j,k}(x)\Phi^{\varepsilon}_{j+s,2^{s}k+l}(x),\
\Phi^{\varepsilon''}_{j',k'}(x)\right>.$$ The following lemma is
obtained in \cite{M}.
\begin{lemma}\label{le9}
There exist sufficient big integers $N$, $N_{1}$ and $N_{2}$ such
that $\min\left\{N, N_{1}, N_{2}\right\}>8n+8m$ and the following
estimates hold.

 (i) If $(\varepsilon, j,k)$, $(\varepsilon', j', k')\in\Lambda_{n}$, $l\in \mathbb{Z}^{n}$ and $j\geq j'$, then
$$|a^{\varepsilon,\varepsilon'}_{j,k,l,j',k'}|\leq C(1+|l|)^{-N_{1}}2^{{nj'}/{2}+j'-j}(1+|k'-2^{j'-j}k|)^{-N_{2}}.$$

(ii) If $(\varepsilon, j, k)$, $(\varepsilon'', j',
k')\in\Lambda_{n}$, $0\leq s\leq N$, $\varepsilon'\in E_{n}$,
$l\in\mathbb{Z}^{n}$ and $j\geq j'$, then
$$|a^{\varepsilon,\varepsilon',\varepsilon'',s}_{j,k,l,j',k'}|\leq C(1+|l|)^{-N_{1}}2^{{nj'}/{2}+j'-j}(1+|k'-2^{j'-j}k|)^{-N_{2}}.$$

(iii) If $(\varepsilon,j,k)$, $(\varepsilon', j',
k')\in\Lambda_{n}$, $l\in\mathbb{Z}^{n}$ and $j<j'$, then
$$|a^{\varepsilon,\varepsilon'}_{j,k,l,j',k'}|\leq C(1+|l|)^{-N_{1}}2^{-{nj'}/{2}+nj+N(j-j')}(1+|k-2^{j-j'}k'|)^{-N_{2}}.$$

(iv) If $(\varepsilon,j,k)$, $(\varepsilon'', j',k')\in\Lambda_{n}$,
$0\leq s\leq N$, $\varepsilon'\in E_{n}$, $l\in\mathbb{Z}^{n}$ and
$j<j'$, then
$$|a^{\varepsilon,\varepsilon', \varepsilon'',s}_{j,k,l,j',k'}|\leq C(1+|l|)^{-N_{1}}2^{-{nj'}/{2}+nj+N(j-j')}(1+|k-2^{j-j'}k'|)^{-N_{2}}.$$
\end{lemma}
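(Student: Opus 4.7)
The plan is to combine three standard ingredients from wavelet analysis. First, the Schwartz-type decay of Meyer wavelets (or the compact support of regular Daubechies wavelets) will provide the spatial decay factors $(1+|l|)^{-N_1}$ and $(1+|k'-2^{j'-j}k|)^{-N_2}$, reflecting that the pairings are negligible when the centers of the constituent functions are far apart. Second, I would use the $L^2$-orthogonality relations $V_j\perp W_j$ and the orthonormality of the wavelet basis $\{\Phi^{\varepsilon}_{j,k}\}$ to produce the crucial mean-zero properties
\[
\int \Phi^0_{j,k+l}(x)\Phi^\varepsilon_{j,k}(x)\,dx=0,\qquad \int \Phi^{\varepsilon'}_{j,k}(x)\Phi^\varepsilon_{j+s,2^sk+l}(x)\,dx=0,
\]
where the second identity requires that $s+|\varepsilon-\varepsilon'|+|l|\neq 0$, which makes $\Phi^{\varepsilon'}_{j,k}$ and $\Phi^{\varepsilon}_{j+s,2^sk+l}$ distinct wavelet basis elements. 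Third, a Taylor/almost-orthogonality argument together with vanishing moments will convert the scale separation $|j-j'|$ into the announced power of $2$.

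\textbf{Cases (i) and (ii): $j\geq j'$.} Denote the product inside the bracket by $G_{j,k}(x):=\Phi^0_{j,k+l}(x)\Phi^\varepsilon_{j,k}(x)$ in case (i) and by $G_{j,k}(x):=\Phi^{\varepsilon'}_{j,k}(x)\Phi^\varepsilon_{j+s,2^sk+l}(x)$ in case (ii). In either situation $\int G_{j,k}\,dx=0$ by the orthogonality above. A change of variable $y=2^jx-k$ yields a pointwise bound $|G_{j,k}(x)|\lesssim 2^{jn}(1+|l|)^{-N_1}(1+|2^jx-k|)^{-N_1}$, the $(1+|l|)^{-N_1}$ factor coming from evaluating the coarser-scale function near the support of the finer one. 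I would then Taylor-expand the remaining wavelet $\Phi^{\varepsilon'}_{j',k'}$ (resp.\ $\Phi^{\varepsilon''}_{j',k'}$) around the point $k2^{-j}$: the constant term is annihilated by $\int G_{j,k}=0$, while the linear term satisfies
\[
\bigl|\nabla \Phi^{\varepsilon'}_{j',k'}(k2^{-j})\bigr|\cdot\Bigl|\int(x-k2^{-j})G_{j,k}(x)\,dx\Bigr|\lesssim 2^{nj'/2+j'}(1+|k'-2^{j'-j}k|)^{-N_2}\cdot 2^{-j}(1+|l|)^{-N_1},
\]
the first-moment integral being computed by direct substitution. Higher-order Taylor terms are dominated by this linear contribution since $j\geq j'$, which yields the announced bound.

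\textbf{Cases (iii) and (iv): $j<j'$.} Here $\Phi^{\varepsilon'}_{j',k'}$ (resp.\ $\Phi^{\varepsilon''}_{j',k'}$) is the fine-scale wavelet and automatically belongs to $E_n$ (since $j'\geq 1$), so I would instead exploit its $N$ vanishing moments. With $G_{j,k}$ as above, Taylor-expand $G_{j,k}$ around the center $k'2^{-j'}$ of the fine wavelet, writing $G_{j,k}=P_{N-1}+R_N$; the polynomial piece $P_{N-1}$ is annihilated upon integration against the fine wavelet. Each of the $N$ derivatives of $G_{j,k}$ contributes a factor $2^j$ (the scale of $G_{j,k}$), so
\[
|R_N(x)|\lesssim 2^{jn+jN}(1+|l|)^{-N_1}(1+|k-2^{j-j'}k'|)^{-N_2}\,|x-k'2^{-j'}|^N.
\]
Pairing this against $\|\Phi^{\varepsilon'}_{j',k'}\|_{L^1}\lesssim 2^{-j'n/2}$ and using $|x-k'2^{-j'}|\lesssim 2^{-j'}$ on the effective support of the fine wavelet produces the claimed factor $2^{-nj'/2+nj+N(j-j')}$ multiplied by the two decay factors.

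\textbf{Main obstacle.} The delicate technical point will be tracking how differentiation and multiplication by monomials consume the available decay of the Meyer wavelets: each derivative lowers the Schwartz decay rate by one, and each monomial factor $|x-k2^{-j}|^\alpha$ paired into the integrand reduces the decay by $|\alpha|$ powers. The hypothesis $\min\{N,N_1,N_2\}>8n+8m$ is precisely what supplies enough regularity and decay to absorb these losses; for Daubechies wavelets one short-circuits the decay argument via compactness of supports, which restricts the admissible $l$ and $k-2^{j-j'}k'$ to bounded sets. A cleaner synthetic formulation recasts $G_{j,k}$ as a ``molecule'' at scale $2^{-j}$ with the stated spatial decay and a vanishing moment of order one (or order $N$ via iterated Taylor expansion in the $j<j'$ case), and deduces the four estimates from the generic wavelet--molecule almost-orthogonality principle in \cite{M}, essentially in the spirit of Lemma \ref{le6}.
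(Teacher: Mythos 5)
The paper gives no proof of this lemma at all---it is quoted verbatim from \cite{M}---so there is no in-paper argument to compare against; your sketch reconstructs the standard almost-orthogonality proof and the overall scheme is correct: one vanishing moment of the product $G_{j,k}$ (from $V_j\perp W_j$, or from orthonormality of two distinct wavelets when $s+|\varepsilon-\varepsilon'|+|l|\neq 0$) combined with a first-order Taylor expansion of the coarse function gives the gain $2^{j'-j}$ for $j\geq j'$, while the vanishing moments of the fine wavelet combined with an order-$N$ expansion of $G_{j,k}$ give the gain $2^{N(j-j')}$ for $j<j'$, the factors $(1+|l|)^{-N_1}$ and $(1+|k'-2^{j'-j}k|)^{-N_2}$ coming from the rapid decay of Meyer wavelets.

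Three points deserve to be made explicit before this counts as a complete proof. First, items (ii) and (iv) also cover the diagonal coefficient $a^{\varepsilon,\varepsilon,\varepsilon'',0}_{j,k,0,j',k'}=\langle(\Phi^{\varepsilon}_{j,k})^{2}-2^{jn}\Phi(2^{j}\cdot-k),\ \Phi^{\varepsilon''}_{j',k'}\rangle$, where the zero mean comes not from orthogonality but from $\int(\Phi^{\varepsilon}_{j,k})^{2}\,dx=1=\int 2^{jn}\Phi(2^{j}x-k)\,dx$; your $G_{j,k}$ must be replaced by this difference there, after which the same molecule estimates apply. Second, your one-line Taylor bound evaluates $\nabla\Phi^{\varepsilon'}_{j',k'}$ at the center $k2^{-j}$, but the mean-value point lies elsewhere; to keep the factor $(1+|k'-2^{j'-j}k|)^{-N_2}$ one must split the $x$-integral into the region where $|2^{j}x-k|$ is small compared with $2^{j-j'}(1+|k'-2^{j'-j}k|)$ (where the Taylor estimate at the center is legitimate) and its complement (where one uses the raw decay of $G_{j,k}$ directly); this is exactly the bookkeeping you flag as the ``main obstacle,'' and it should be carried out rather than only announced. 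Third, since $N>8n+8m$ exceeds the number $m$ of vanishing moments of an $m$-regular Daubechies wavelet, the order-$N$ expansion in cases (iii)--(iv) is only available for Meyer wavelets (infinitely many vanishing moments and Schwartz decay), which is how the paper actually invokes the lemma in the proof of Theorem \ref{th3}; for Daubechies wavelets your argument would only yield the exponent $m(j-j')$, so the claim as stated should be read as a statement about Meyer wavelets.
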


\begin{theorem}\label{th3}
For $t\geq 0,r>0$ and $t+r<1<p<{n}/{(t+r)}$,   there exist two equivalent relations between
$X^{t}_{r,p}(\mathbb{R}^{n})$ and
$M^{t}_{r,p}(\mathbb{R}^{n})$.

(i) $f\in X^{t}_{r,p}(\mathbb{R}^{n})$ if and only if $f\in
M^{t}_{r,p}(\mathbb{R}^{n})$ and $f\in S^{t}_{r,p}(\mathbb{R}^{n})$.

(ii) $f\in X^{t}_{r,p}(\mathbb{R}^{n})$ if and only if $f\in
M^{t}_{r,p}(\mathbb{R}^{n})$ and $f\in S^{\Phi,
t}_{r,p}(\mathbb{R}^{n})$.
\end{theorem}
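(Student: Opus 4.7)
The plan is to compute $\|fg\|_{H^{t,p}}$ via the wavelet square function characterization of Lemma \ref{le2} and to expand $fg$ using the modified Dobynski decomposition (\ref{for1})--(\ref{for2}). Each piece $P_j(f)Q_j(g)$, $Q_j(f)P_j(g)$ and $Q_{j+s}(f)Q_j(g)$ is a sum of products $\Phi^{\varepsilon}_{j,k}(x)\Phi^{\varepsilon'}_{j',k'}(x)$, which, when re-expanded in the wavelet basis, produces coefficients of the form $a^{\varepsilon,\varepsilon'}_{j,k,l,j',k'}$ and $a^{\varepsilon,\varepsilon',\varepsilon'',s}_{j,k,l,j',k'}$. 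Lemma \ref{le9} provides Calder\'on--Zygmund type decay for these coefficients, so the sum splits naturally into a \emph{diagonal} part (terms where the wavelet indices of $f$ and $g$ coincide and the product collapses to $(\Phi^{\varepsilon}_{j,k})^2$), and an \emph{off-diagonal} part (separated in scale or in position), the latter being amenable to the almost-orthogonality machinery of Lemma \ref{le7}.

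\textbf{Sufficiency.} Assume $f\in M^{t}_{r,p}(\mathbb{R}^{n})\cap S^{t}_{r,p}(\mathbb{R}^{n})$. For the low-high pieces $P_j(f)Q_j(g)$ and the scale-separated high-low piece $Q_{j+N}(f)P_j(g)$, the off-diagonal decay of Lemma \ref{le9} combined with Lemma \ref{le7} reduces the contribution to a square function of the wavelet coefficients of $f$ weighted against local averages of $g$; here the Morrey characterization (Theorem \ref{th7}) bounds $f$ while the wavelet description of $H^{t+r,p}$ bounds $g$, with the almost-local operator $T^t$ (Definition \ref{def8}) used to handle the fractional derivative of order $t$ via Lemma \ref{le8}. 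For the on-scale piece $Q_j(f)Q_j(g)$ I would split off the truly diagonal term $\sum f^{\varepsilon}_{j,k} g^{\varepsilon}_{j,k}(\Phi^{\varepsilon}_{j,k})^2$, subtract a regularised surrogate $\sum f^{\varepsilon}_{j,k} g^{\Phi,\varepsilon}_{j,k}\, 2^{jn}\Phi(2^j x-k)$, and observe that the remainder has vanishing moments so its wavelet coefficients are controlled by the kernels $a^{\varepsilon,\varepsilon,\varepsilon'',0}_{j,k,0,j',k'}$ of Lemma \ref{le9}; what survives is precisely the quantity controlled by the $S^{t}_{r,p}$ hypothesis, after dealing with the fractional derivative via $T^t$.

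\textbf{Necessity.} The inclusion $X^{t}_{r,p}\subset M^{t}_{r,p}$ is Lemma \ref{le3}. To obtain $f\in S^{t}_{r,p}$, given any $g\in H^{t+r,p}$ with $\|g\|_{H^{t+r,p}}\le 1$, I would apply the combination atom decomposition of Theorem \ref{th2} to reduce to the case where $g$ is supported (in the square function sense) on a controlled level set, run the product decomposition in reverse on the diagonal of $fg$, and use $f\in X^{t}_{r,p}$ to bound the resulting $H^{t,p}$ norm. The diagonal contribution to $fg$ carries exactly the coefficients $f^{\varepsilon}_{j,k}g^{\Phi,\varepsilon}_{j,k}$ (up to the controlled off-diagonal error handled by Lemma \ref{le9}), so the multiplier estimate forces the $S^{t}_{r,p}$ bound. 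The equivalence of (i) and (ii) is immediate once one shows pointwise $|g^{\Phi,\varepsilon}_{j,k}|$ and $|g^{\Phi}_{j,k}|$ are comparable to the Hardy--Littlewood maximal function of $g$ at scale $2^{-j}$, which gives $L^p$-comparability of the two square functions.

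\textbf{Main obstacle.} The chief difficulty is the non-locality of $(I-\Delta)^{t/2}$ for $t>0$; this is the reason for introducing $T^t$, which is an almost-local surrogate coinciding with $(I-\Delta)^{t/2}$ modulo harmless kernels of Calder\'on--Zygmund type, and for Lemma \ref{le8}, which converts scaling-function coefficients of $h$ into pointwise bounds involving $M(T^t h)$. A secondary but non-trivial issue is the bookkeeping of off-diagonal remainders: the scale-gap parameter $N$ in (\ref{for1})--(\ref{for2}) must be chosen large enough (consistent with Lemma \ref{le9}) that the piece $Q_{j+N}(f)P_j(g)$ produces no diagonal contribution, so that the only genuinely diagonal source is the single on-scale term treated via $S^{t}_{r,p}$.
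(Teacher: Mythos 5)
Your toolbox matches the paper's (the modified Dobynski decomposition (\ref{for1})--(\ref{for2}), the coefficient decay of Lemma \ref{le9}, the diagonal/off-diagonal split, and the almost-local operator $T^{t}$ with Lemma \ref{le8}), but the structural device that makes the argument close is missing. The paper does not estimate $\|fg\|_{H^{t,p}}$ directly: it decomposes the product $fh$ with $h\in H^{-t,p'}$ and estimates the trilinear pairing $\langle fh,g\rangle$ for $g\in H^{t+r,p}$, recovering $\|fg\|_{H^{t,p}}$ by duality. This is not a cosmetic choice. The space $S^{t}_{r,p}$ is defined through the coefficients $g^{\Phi,\varepsilon}_{j,k}=\langle g,\,2^{jn}(\Phi^{\varepsilon})^{2}(2^{j}\cdot-k)\rangle$ with $g\in H^{t+r,p}$, and these arise precisely when the diagonal term $T_{4,\varepsilon}=\sum f^{\varepsilon}_{j,k}h^{\varepsilon}_{j,k}(\Phi^{\varepsilon}_{j,k})^{2}$ of the decomposition of $fh$ is tested against $g$. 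In your direct expansion of $fg$ the diagonal carries $f^{\varepsilon}_{j,k}g^{\varepsilon}_{j,k}$ (ordinary wavelet coefficients of $g$), and dualizing afterwards produces pairings of $(\Phi^{\varepsilon}_{j,k})^{2}$ against $h\in H^{-t,p'}$, not against $g$; so your claim that the diagonal of $fg$ ``carries exactly the coefficients $f^{\varepsilon}_{j,k}g^{\Phi,\varepsilon}_{j,k}$'' is not correct, and the hypothesis $f\in S^{t}_{r,p}$ cannot be plugged in where you want to use it.

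The same issue breaks your necessity argument. The paper proves $X^{t}_{r,p}\subset S^{t}_{r,p}$ by showing that $T_{1},T_{2},T_{3},T_{5}$ are bounded using only $f\in M^{t}_{r,p}\subset BMO^{r}$ (Lemmas \ref{le5}, \ref{le8}, \ref{le9}), so that boundedness of $\langle fh,g\rangle$ forces boundedness of $\langle T_{4,\varepsilon},g\rangle$, and then converts the bounded trilinear form into the $L^{p}$ square-function bound of Definition \ref{def5} by testing against the extremal function $h=|\tau|^{p-2}\overline{\tau}$, where $\tau^{\varepsilon}_{j,k}=2^{j(n/2+t)}f^{\varepsilon}_{j,k}g^{\Phi,\varepsilon}_{j,k}$; this yields $\|\tau\|^{p}_{L^{p}}\leq C\|\tau\|^{p/p'}_{L^{p}}$ and hence $f\in S^{t}_{r,p}$. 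Your proposal replaces this with an appeal to the combination-atom decomposition of Theorem \ref{th2}, which plays no role in this theorem (it is used later, in Theorem \ref{th11}), and gives no mechanism for extracting the square-function bound from the multiplier estimate. Finally, your derivation of (i)$\Leftrightarrow$(ii) from pointwise comparability of $|g^{\Phi,\varepsilon}_{j,k}|$ and $|g^{\Phi}_{j,k}|$ with a maximal function is not sufficient (pointwise upper bounds by $Mg$ do not give two-sided $L^{p}$ comparability of the square functions); the paper instead re-runs the decomposition with the diagonal surrogate $2^{jn}\Phi(2^{j}x-k)$ and absorbs the difference $(\Phi^{\varepsilon}_{j,k})^{2}-2^{jn}\Phi(2^{j}x-k)$, which has vanishing integral, into the off-diagonal estimates --- an idea you do mention in the sufficiency paragraph and which is the correct route.
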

\begin{proof}
Let $\Phi^{0}$ and $\Phi^{\varepsilon}$ be the scaling function and
wavelet functions of Meyer wavelets, respectively. There exists an
integer $N\geq 3$ such that $\int
x^{\alpha}\Phi^{0}(x)\Phi^{\varepsilon}(2^{N}x-k)dx=0,\ \forall
k\in\mathbb{Z}^{n},\alpha \in \mathbb{N}^{n},\ \forall
\varepsilon\in E_{n}$. Denote by $\Lambda_{\varepsilon, n}$ the set
$$\left\{(s,\ \varepsilon',\ l),\ 0\leq s\leq N,\ \varepsilon'\in E_{n},\ l\in\mathbb{Z}^{n},\ |l|\leq 2^{(M+2+s)n}
\text{ and if }s=0, \text{ then }(0, \varepsilon, 0)\neq(0,
\varepsilon', 0)\right\}.$$ Let $h(x)$ be any function in
$H^{-t,p'}(\mathbb{R}^{n})$. We  prove that for $f\in
M^{t}_{r,p}(\mathbb{R}^{n})\cap S^{t}_{r,p}(\mathbb{R}^{n})$, $fh\in
H^{-t-r,p'}(\mathbb{R}^{n})$ and
$$\|fh\|_{H^{-t-r,p'}(\mathbb{R}^{n})}\leq
C\|f\|_{M^{t}_{r,p}(\mathbb{R}^{n})\cap
S^{t}_{r,p}(\mathbb{R}^{n})}\|h\|_{H^{-t,p'}(\mathbb{R}^{n})}.$$

In fact, if the above inequality holds,  we have
\begin{eqnarray*}
\|f\|_{X^{t}_{r,p}(\mathbb{R}^{n})}&=&\sup_{\|g\|_{H^{t+r,p}(\mathbb{R}^{n})}\leq
1}\|f g\|_{H^{t,p}(\mathbb{R}^{n})}\\
&=&\sup_{\|g\|_{H^{r+t,p}(\mathbb{R}^{n})}\leq1}\sup_{\|h\|_{H^{-t,p'}(\mathbb{R}^{n})}}|\left<fg,\ h\right>|\\
&=&\sup_{\|g\|_{H^{r+t,p}(\mathbb{R}^{n})}\leq1}\sup_{\|h\|_{H^{-t,p'}(\mathbb{R}^{n})}}|\left<fh,\ g\right>|\\
&\leq&\sup_{\|g\|_{H^{r+t,p}(\mathbb{R}^{n})}\leq1}
\sup_{\|h\|_{H^{-t,p'}(\mathbb{R}^{n})}}\|f h\|_{H^{-t-r,p'}(\mathbb{R}^{n})}\|g\|_{H^{t+r,p}(\mathbb{R}^{n})}\\
&\leq&\|f\|_{M^{t}_{r,p}(\mathbb{R}^{n})\cap
S^{t}_{r,p}(\mathbb{R}^{n})}\|g\|_{H^{t+r,p}(\mathbb{R}^{n})}.
\end{eqnarray*}
Hence we can get $X^{t}_{r,p}(\mathbb{R}^{n})\subset
M^{t}_{r,p}(\mathbb{R}^{n})\cap S^{t}_{r,p}(\mathbb{R}^{n})$.

Now we begin to prove the above inequality. At first, we give a
wavelet decomposition of the product of $f h$. Denote
$$\mathbb{Z}^{n}_{N}=\left\{l=(l_{1},\cdots,l_{n})\in\mathbb{Z}^{n},
0\leq l_{i}\leq 2^{N}-1, i=1,\cdots, n\right\}.$$ For
$\varepsilon\in E_{n}$, $l\in\mathbb{Z}^{n}$, $|l|\leq 2^{(M+2)n}$
and $(\varepsilon', l)\in\Lambda_{\varepsilon, n}$, we denote
\begin{eqnarray*}
T_{1,\varepsilon}(x)&=&\sum_{j\in\mathbb{N}, k\in\mathbb{Z}^{n}
 }~\sum_{l\in\mathbb{Z}^{n}}f^{0}_{j,k+l}h^{\varepsilon}_{j,k}\Phi^{0}_{j,k+l}(x)\Phi^{\varepsilon}_{j,k}(x);\\
T_{2,0,\varepsilon, \varepsilon}(x)&=&\sum_{j\in\mathbb{N},
k\in\mathbb{Z}^{n}}~\sum_{l\in\mathbb{Z}^{n}\setminus\{0\}}f^{\varepsilon}_{j,k+l}h^{\varepsilon}_{j,k}\Phi^{\varepsilon}_{j,k+l}(x)\Phi^{\varepsilon}_{j,k}(x);\\
T_{3,\varepsilon}(x)&=&\sum_{j\in\mathbb{N}, k\in\mathbb{Z}^{n}
}~\sum_{l\in\mathbb{Z}^{n}_{N}}~\sum_{l'\in\mathbb{Z}^{n}}
f^{\varepsilon}_{j+N,2^{N}k+l}h^{0}_{j,k+l'}\Phi^{0}_{j,k+l'}(x)\Phi^{\varepsilon}_{j+N,2^{N}k+l}(x);\\
T_{4,\varepsilon}(x)&=&\sum_{j\in\mathbb{N},
k\in\mathbb{Z}^{n}}f^{\varepsilon}_{j,k}h^{\varepsilon
}_{j,k}2^{jn}(\Phi^{\varepsilon}_{j,k}(x))^{2};\\
T_{5}(x)&=&\sum_{k\in
\mathbb{Z}^{n}}~\sum_{l\in\mathbb{Z}^{n}}f^{0}_{0,k}h^{0}_{0,k+l}\Phi^{0}_{0,k}(x)\Phi^{0}_{0,
k+l}(x).
\end{eqnarray*}
Further,  for any $ \varepsilon, \varepsilon'\in E_{n}$, $0<s\leq N$
or $s=0$ and $\varepsilon\neq\varepsilon'$, denote
$$T_{2,\varepsilon, \varepsilon', s}(x)=\sum_{j\in\mathbb{N}, k\in\mathbb{Z}^{n}}\sum_{l\in\mathbb{Z}^{n}}
f^{\varepsilon'}_{j+s,2^{s}k+l}h^{\varepsilon}_{j,k}\Phi^{\varepsilon'}_{j+s,2^{s}k+l}(x)\Phi^{\varepsilon}_{j,k}(x).$$
By the formulas (\ref{for1}) and (\ref{for2}), we can divide
$f(x)h(x)$ into the sum of the above five terms, that is,
\begin{eqnarray*}
f(x)h(x)&=&\sum_{\varepsilon\in E_{n}}T_{1,\varepsilon}+\sum_{0\leq
s\leq N, \varepsilon,\varepsilon'\in E_{n}}T_{2,
s,\varepsilon,\varepsilon'}(x) +\sum_{\varepsilon\in
E_{n}}T_{3,\varepsilon}+\sum_{\varepsilon\in
E_{n}}T_{4,\varepsilon}+T_{5}(x)\\
&:=&\sum^{5}_{i=1}T_{i}(x).
\end{eqnarray*}
 If $g(x)\in H^{t+r,p}(\mathbb{R}^{n})$, $g(x)=\sum\limits_{(\varepsilon,j,k)\in\Lambda_{n}}g^{\varepsilon}_{j,k}\Phi^{\varepsilon}_{j,k}(x)$.
For $\varepsilon,\varepsilon'\in E_{n}$ and $0\leq s\leq N$, we
define $T_{1,\varepsilon}=\int T_{1,\varepsilon}(x)g(x)dx$ and
 $T_{2, s, \varepsilon,\varepsilon'}=\int T_{2, s, \varepsilon,\varepsilon'}(x)g(x)dx$.
 Let
 $$T_{1,1,\varepsilon,
l}=\sum_{j\in\mathbb{N}, k\in\mathbb{Z}^{n}}~\sum_{j\geq j'\geq 0,
\varepsilon',
k'\in\mathbb{Z}^{n}}|f^{0}_{j,k+l}||h^{\varepsilon}_{j,k}||a^{\varepsilon,
\varepsilon'}_{j,k, l, j', k'}||g^{\varepsilon'}_{j',k'}|,$$
$$T_{1,2,\varepsilon,
l}=\sum_{j\in\mathbb{N}, k\in\mathbb{Z}^{n}}~\sum_{j< j',
\varepsilon',
k'\in\mathbb{Z}^{n}}|f^{0}_{j,k+l}||h^{\varepsilon}_{j,k}||a^{\varepsilon,
\varepsilon'}_{j,k, l, j', k'}||g^{\varepsilon'}_{j',k'}|,$$
$$T_{2, 1, s, \varepsilon, \varepsilon',\ l}=\sum_{j\in\mathbb{N}, k\in\mathbb{Z}^{n}}~\sum_{j\geq j'>0,\ \varepsilon'',\ k'\in\mathbb{Z}^{n}}
|f^{\varepsilon'}_{j+s,\
2^{s}k+l}||h^{\varepsilon}_{j,k}||a^{\varepsilon, \varepsilon',
\varepsilon'', s}_{j,k, l, j', k'}||g^{\varepsilon''}_{j',\ k'}|$$
and
$$T_{2,\ 2,\ s,\ \varepsilon, \varepsilon',\ l}=\sum_{j\in\mathbb{N}, k\in\mathbb{Z}^{n}}~\sum_{j<j',\ \varepsilon'',\ k}
|f^{\varepsilon'}_{j+s,\
2^{s}k+l}||h^{\varepsilon}_{j,k}||a^{\varepsilon, \varepsilon',
\varepsilon'', s}_{j,k, l, j',
k'\in\mathbb{Z}^{n}}||g^{\varepsilon''}_{j',\ k'}|.$$ It is easy to see that
 $$ |T_{1,\varepsilon}|\leq \sum\limits_{l\in\mathbb{Z}^{n}}\left(T_{1,1,\varepsilon,
l}+T_{1,2,\varepsilon, l}\right)\text{ and } |T_{2, s,\varepsilon,\varepsilon'}|\leq
\sum\limits_{l\in\mathbb{Z}^{n}}\left(T_{2,1, s, \varepsilon,\varepsilon' l}+T_{2,2, s, \varepsilon,
\varepsilon', l}\right). $$

Let $S_{t}g_{j'}(x)=\sum\limits_{\varepsilon',
k'}2^{j'({n}/{2}+t)}g^{\varepsilon'}_{j',k'}\chi(2^{j'}x-k')$.
For fixed $x$, there is only one $k'$ such that
$\chi(2^{j'}x-k')\neq0$ and the number of $\varepsilon'$ in the sum
is finite. Then the operator $S_{t}g_{j'}(x)$ is equivalent to the
following one:
$$S_{t}g_{j'}(x)=\left(\sum_{\varepsilon',k'}2^{2j'({n}/{2}+t)}|g^{\varepsilon'}_{j',k'}|^{2}\chi(2^{j'}x-k')\right)^{1/2}.$$

Let $M$ be the Hardy-Littlewood maximal function. Then if $j<j'$,
$\forall x\in Q_{j,k}$, we have
$$\sum_{k'}(1+|k-2^{j-j'}k'|)^{-N_{2}}2^{j'(-{n}/{2}+t)}g^{\varepsilon'}_{j',k'}\leq C2^{-jn}MS_{t}g_{j'}(x).$$
Now we estimate the quantities $T_{1,1,\varepsilon,l}$,
$T_{1,2,\varepsilon,l}$, $T_{2,1,s, \varepsilon, \varepsilon',l}$
and $T_{2,2,s, \varepsilon, \varepsilon',l}$ separately.

(1) For $j'\leq j$, we have
$$T_{1,1,\varepsilon, l}=\sum_{j,k}\sum_{j\geq j', \varepsilon', k'}|f^{0}_{j,k+l}||h^{\varepsilon}_{j,k}|
|a^{\varepsilon, \varepsilon'}_{j,k, l,
j',k'}||g^{\varepsilon'}_{j',k'}|.$$
By Lemma \ref{le9}, we know
$$|a^{\varepsilon,\varepsilon'}_{j, k, l, j', k'}|\leq
(1+|l|)^{-N_{1}}(1+|k'-2^{j'-j}k|)^{-N_{2}}2^{{nj'}/{2}+j'-j}.$$ By Lemma \ref{le5}, $f\in
M^{t}_{r,p}(\mathbb{R}^{n})\subset
BMO^{r}(\mathbb{R}^{n})$ implies
$|f^{\varepsilon}_{j,k}|\leq C2^{(r-{n}/{2})j}$. Now we can get
\begin{eqnarray*}
&&\sum_{l\in \mathbb{Z}^{n}}T_{1,1,\varepsilon,l}\leq\sum_{l\in \mathbb{Z}^{n}}\sum_{k, j\geq j'}\int
2^{{3nj'}/{2}+(j'-j)+(r-{n}/{2})j}\sum_{k'}\left(\sum_{\varepsilon'}|g^{\varepsilon'}_{j',k'}|^{2}\right)^{1/2}
\frac{|h^{\varepsilon}_{j,k}|\chi(2^{j'}x-k')}{(1+|l|)^{N_{1}}(1+|k'-2^{j'-j}k|)^{N_{2}}}dx\\
&\leq&\sum_{ j\geq
j'}\int2^{{3nj'}/{2}+(j'-j)+(r-{n}/{2})j}\sum_{k'}\left(\sum_{\varepsilon'}2^{2j'({n}/{2}+t+r)}|g^{\varepsilon'}_{j',k'}|^{2}
\right)^{1/2}2^{-j'(t+r+{n}/{2})}\\
&\times&\sum_{k}\frac{|h^{\varepsilon}_{j,k}|\chi(2^{j'}x-k')}
{(1+|k'-2^{j'-j}k|)^{N_{2}}}2^{j(-{n}/{2}-t)}2^{j({n}/{2}+t)}dx\\
&\leq&\int\sum_{j\geq
j'}2^{{3nj'}/{2}-j'(t+r+{n}/{2})+(j'-j)+(r-{n}/{2})j+j({n}/{2}+t)}2^{-j'n}
S_{t+r}g_{j'}(x)M(S_{-t}h_{j})(x)dx.
\end{eqnarray*}
Because $0<t+r<1$,
\begin{eqnarray*}
\sum_{l\in\mathbb{Z}^{n}}T_{1,1,\varepsilon,l}&\leq&\sum_{j\geq
j'}2^{(j'-j)+(r+t)j-j'(r+t)}\int
S_{t+r}g_{j'}(x)M(S_{-t}h_{j})(x)dx\\
&\leq&\sum_{j\geq
j'}2^{(j'-j)(1-r-t)}\|g\|_{H^{t+r,p}(\mathbb{R}^{n})}\|h\|_{H^{-t,p'}(\mathbb{R}^{n})}\\
&\leq&C\|g\|_{H^{t+r,p}(\mathbb{R}^{n})}\|h\|_{H^{-t,p'}(\mathbb{R}^{n})}.
\end{eqnarray*}
Now we estimate the term
$$T_{2,1,s,\varepsilon,\varepsilon',l}(x)=\sum_{j,k}\sum_{j\geq j'>0}|f^{\varepsilon'}_{j,k}||h^{\varepsilon}_{j+s, 2^{s}k+l}|
|a^{\varepsilon,\varepsilon',\varepsilon'',s}_{j,k,l,j',k'}||g^{\varepsilon''}_{j',k'}|.$$
Because $f\in M^{t}_{r,p}(\mathbb{R}^{n})\subset
BMO^{r}(\mathbb{R}^{n})$, by Lemma \ref{le5},  we have
$|f^{\varepsilon'}_{j,k+l}|\leq C2^{(r-{n}/{2})j}$. By Lemma
\ref{le9},
$$|a^{\varepsilon,\varepsilon',\varepsilon'',s}_{j,k,l,j',k'}|\leq
C(1+|l|)^{-N_{1}}(1+|k'-2^{j'-j}k|)^{-N_{2}}2^{{nj'}/{2}+(j'-j)}.$$ We can get, similarly,
\begin{eqnarray*}
\sum_{l\in\mathbb{Z}^{n}_{N}}T_{2,1,s,\varepsilon,\varepsilon',l}(x)&=&\sum_{l\in\mathbb{Z}^{n}_{N}}\sum_{j,k}\sum_{j\geq
j'>0}|f^{\varepsilon'}_{j,k}||h^{\varepsilon}_{j+s, 2^{s}k+l}|
|a^{\varepsilon,\varepsilon',\varepsilon'',s}_{j,k,l,j',k'}||g^{\varepsilon''}_{j',k'}|\\
&\leq&C\sum_{l\in\mathbb{Z}^{n}_{N}}\sum_{j,k}\sum_{j\geq
j'>0}2^{(r-{n}/{2})j}2^{{nj'}/{2}+j'-j}\frac{|h^{\varepsilon}_{j+s,2^{s}k+l}||g^{\varepsilon''}_{j',k'}|}
{(1+|l|)^{N_{1}}(1+|k'-2^{j'-j}k|)^{N_{2}}}\\
&\leq&C\sum_{l\in\mathbb{Z}^{n}_{N}}\sum_{j\geq
j'>0}\int2^{{3nj'}/{2}+(j'-j)+(r-{n}/{2})j}\sum_{\varepsilon'',k'}
\frac{|g^{\varepsilon'}_{j',k'}||h^{\varepsilon}_{j+s,2^{s}k+l}|\chi(2^{j'}x-k')}
{(1+|l|)^{N_{1}}(1+|k'-2^{j'-j}k|)^{N_{2}}}dx\\
&\leq&\sum_{j\geq j'>0}2^{(j'-j)(1-t-r)}\int
S_{t+r}(g_{j'})(x)MS_{-t}h_{j+s}(x)dx\\
&\leq&C\|S_{t+r}g_{j'}\|_{L^{p}(\mathbb{R}^{n})}\|MS_{-t}h_{j+s}\|_{L^{p'}(\mathbb{R}^{n})}\\
&\leq
&C\|g\|_{H^{t+r,p}(\mathbb{R}^{n})}\|h\|_{H^{-t,p'}(\mathbb{R}^{n})}.
\end{eqnarray*}
 (2) If $j'>j$,  the  estimates of
the terms $T_{1,2,\varepsilon, l}$ and
$T_{2,2,s,\varepsilon,\varepsilon',l}$ are easier than those of $T_{1,1,\varepsilon, l}$ and
$T_{2,1,s,\varepsilon,\varepsilon',l}$.
For example, we estimate the term
\begin{eqnarray*}
T_{1,2,\varepsilon, l}=\sum_{j,k}\sum_{0\leq j<j',
\varepsilon',k'}|f^{0}_{j,k+l}||h^{\varepsilon}_{j,k}||a^{\varepsilon,\varepsilon'}_{j,k,l,j',k'}||g^{\varepsilon'}_{j',k'}|.
\end{eqnarray*}
Because $f\in M^{t}_{r,p}(\mathbb{R}^{n})\subset
BMO^{r}(\mathbb{R}^{n})$, by Lemma \ref{le5},
$|f^{0}_{j,k+l}|\leq C2^{(r-{n}/{2})j}$. By Lemma \ref{le9},
$$|a^{\varepsilon,\varepsilon'}_{j,k,l,j',k'}|\leq
C(1+|l|)^{-N_{1}}(1+|k-2^{j-j'}k'|)^{-N_{2}}2^{-{nj'}/{2}+nj+N(j-j')}.$$
 Because $2^{nj}\int
\chi(2^{j}x-k)dx=1$, we can obtain
\begin{eqnarray*}
&&\sum_{l\in\mathbb{Z}^{n}}T_{1,2,\varepsilon,l}\leq C\sum_{l\in\mathbb{Z}^{n}}\sum_{j,k}\sum_{0\leq
j<j',\varepsilon',k'}2^{(r-{n}/{2})j}2^{-{nj'}/{2}+nj+N(j-j')}\frac{|h^{\varepsilon}_{j,k}||g^{\varepsilon'}_{j',k'}|}
{(1+|l|)^{N_{1}}(1+|k-2^{j-j'}k'|)^{N_{2}}}\\
&\leq&C\sum_{l\in\mathbb{Z}^{n}}\sum_{j,k}\sum_{0\leq
j<j',\varepsilon',k'}\int2^{(r-{n}/{2})j}2^{-{nj'}/{2}+nj+N(j-j')}2^{nj}
\frac{|h^{\varepsilon}_{j,k}||g^{\varepsilon'}_{j',k'}|\chi(2^{j}x-k)}{(1+|l|)^{N_{1}}(1+|k-2^{j-j'}k'|)^{N_{2}}}dx\\
&\leq&C\sum_{\varepsilon,j,k}\int2^{(r-{n}/{2})}2^{-{nj'}/{2}+nj+N(j-j')}2^{nj}2^{-nj}2^{-j'(-{n}/{2}+t+r)}MS_{t+r}(g_{j'})(x)
|h^{\varepsilon}_{j,k}|\chi(2^{j}x-k)dx\\
&\leq&C\sum_{0\leq j<j'}\int 2^{(r-{n}/{2})j}2^{-{nj'}/{2}+nj+N(j-j')}2^{-j({n}/{2}-t)}2^{-j'(-{n}/{2}+t+r)}MS_{t+r}(g_{j'})(x)S_{-t}(h_{j})(x)dx\\
&\leq&C\|g\|_{H^{t+r,p}(\mathbb{R}^{n})}\|h\|_{H^{-t,p'}(\mathbb{R}^{n})}.
\end{eqnarray*}
The estimate for  $T_{2,2,s,\varepsilon,\varepsilon',l}(x)$ can be
obtained similarly. By the same methods used in (1) and (2), we can
get the estimate of the term $T_{5}$. We omit the details.

(3) Now we consider the term  $T_{3,\varepsilon}$. We have the
following claim.

{\it Claim 1:\ Given $r>0,\ t\geq0$ and $t+r<1<p<{n}/{(t+r)}$. If
$f\in BMO^{r}(\mathbb{R}^{n})$, then
$$|\left<T_{3,\varepsilon}(x),g(x)\right>|\leq
C\|g\|_{H^{t+r,p}(\mathbb{R}^{n})}\|h\|_{H^{-t,p'}(\mathbb{R}^{n})}.$$}
In fact, for $l\in \mathbb{Z}^{n}_{N}$, $(\varepsilon,j,k)\in\Lambda_{n}$ and $l'\in\mathbb{Z}^{n}$, let
$$g^{\varepsilon,l'}_{j+N,2^{N}k+l}=2^{-{nj}/{2}}\left<\Phi^{0}_{j,k+l'}(x)\Phi^{\varepsilon}_{j+N,2^{N}k+l}(x), g(x)\right>.$$
We have
\begin{eqnarray*}
|\left<T_{3,\varepsilon}(x),\ g(x)\right>|&=&\left|\sum_{(\varepsilon,j,k)\in\Lambda_{n}}\sum_{l\in \mathbb{Z}_{N}^{n}, l'\in\mathbb{Z}^{n}}2^{{jn}/{2}}h^{0}_{j, k+l'}g^{\varepsilon,l'}_{j+N,2^{N}k+l}f^{\varepsilon}_{j+N, 2^{N}k+l}\right|\\
&\leq&\sum_{l\in\mathbb{Z}^{n}_{N}}\sum_{(\varepsilon,j,k)\in\Lambda_{n}}\left(\sum_{l'\in\mathbb{Z}^{n}}2^{{jn}/{2}}|h^{0}_{j,
k+l'}||g^{\varepsilon,l'}_{j+N,2^{N}k+l}|\right)|f^{\varepsilon}_{j+N,
2^{N}k+l}|.
\end{eqnarray*}
Because  $(F^{r,2}_{1}(\mathbb{R}^{n}))'=BMO^{r}(\mathbb{R}^{n})$,
by Lemma \ref{le8}, we have
\begin{eqnarray*}
&&|\left<T_{3,\varepsilon}(x),\ g(x)\right>|\\
&\leq&C\sum_{l\in\mathbb{Z}^{n}_{N}}\|f\|_{BMO^{r}}\int\left(\sum_{(\varepsilon,j,k)\in\Lambda_{n}}2^{j(n+2r+t)}
\left(\sum_{l'\in\mathbb{Z}^{n}}2^{j({n}/{2}-t)}|h^{0}_{j, k+l'}||g^{\varepsilon,l'}_{j+N,2^{N}k+l}|\right)^{2}\chi(2^{j}x-k)\right)^{1/2}dx\\
&\leq&C\|f\|_{BMO^{r}}\int\left(\sum_{(\varepsilon,j,k)\in\Lambda_{n}}2^{j(n+2r+2t)}
\left(\sum_{l'\in\mathbb{Z}^{n}}(1+|l'|)^{N+n}|g^{\varepsilon,l'}_{j+N,2^{N}k+l}|\right)^{2}\chi(2^{j}x-k)\right)^{1/2}MT^{t}h(x)dx.
\end{eqnarray*}
Because
\begin{eqnarray*}
&&|g^{\varepsilon,l'}_{j+N,2^{N}k+l}|=2^{-{nj}/{2}}\left|\left<\Phi^{0}_{j,k+l'}(x)\Phi^{\varepsilon}_{j+N,2^{N}k+l}(x), \sum_{(\varepsilon',j',k')\in\Lambda_{n}}g^{\varepsilon'}_{j',k'}\Phi(x)^{\varepsilon'}_{j',k'}\right>\right|\\
&=&2^{-{nj}/{2}}\left|\left<\Phi^{0}_{j,k+l'}(x)\Phi^{\varepsilon}_{j+N,2^{N}k+l}(x), \sum_{(\varepsilon',j',k')\in\Lambda_{n}, |j'-j-N|\leq2}g^{\varepsilon'}_{j',k'}\Phi(x)^{\varepsilon'}_{j',k'}\right>\right|,
\end{eqnarray*}
we can get
\begin{eqnarray*}
&&\left(\sum_{(\varepsilon,j,k)\in\Lambda_{n}}2^{(n+2t+2t)j}\left(\sum_{l'\in\mathbb{Z}^{n}}(1+|l'|)^{N+n}|
g^{\varepsilon,l'}_{j+N,2^{N}k+l}|\right)^{2}\chi(2^{j}x-k)\right)^{1/2}
\leq\sum_{|j'-j-N|\leq2}MS_{t+r}g_{j'}(x).
\end{eqnarray*}
Hence we have
\begin{eqnarray*}
&&\int\left(\sum_{(\varepsilon,j,k)\in\Lambda_{n}}2^{j(n+2r+t)}
\left(\sum_{l'\in\mathbb{Z}^{n}}2^{j({n}/{2}-t)}|h^{0}_{j, k+l'}||g^{\varepsilon,l'}_{j+N,2^{N}k+l}|\right)^{2}\chi(2^{j}x-k)\right)^{p/2}dx\\
&\leq&\int\left(\sum_{j\geq0}\left(\sum_{|j'-j-N|\leq2}MS_{t+r}g_{j'}(x)\right)^{2}\right)^{p/2}dx\\
&\leq&C\|g\|_{H^{t+r,p}}.
\end{eqnarray*}
Finally, we obtain, by H\"{o}lder's inequality,
\begin{eqnarray*}
|<T_{3,\varepsilon}(x),\ g(x)>|&\leq&C\|f\|_{BMO^{r}}\|g\|_{H^{t+r,p}}\|MT^{t}h\|_{p'}\\
&\leq&C\|f\|_{BMO^{r}}\|g\|_{H^{t+r,p}}\|h\|_{H^{-t,p'}}.
\end{eqnarray*}
This completes the proof of  Claim 1.

 In order to deal with the term
$T_{4,\varepsilon}(x)
=\sum\limits_{(\varepsilon,j,k)\in\Lambda_{n}}f^{\varepsilon}_{j,k}h^{\varepsilon}_{j,k}\left(\Phi^{\varepsilon}_{j,k}(x)\right)^{2},$
we need the following estimate.

{\it Claim 2:\ For $r>0,\ t\geq 0$ and $t+r<1<p<{n}/{(r+t)}$,
$f(x)\in S^{t}_{r,p}(\mathbb{R}^{n})$ if and only if
$$|\left<T_{4,\varepsilon}(x),\ g(x)\right>|\leq
C\|g\|_{H^{r+t,p}(\mathbb{R}^{n})}\|h\|_{H^{-t,p'}(\mathbb{R}^{n})}.$$}

In fact, for $g\in H^{r+t,p}(\mathbb{R}^{n})$,
$\left<T_{4,\varepsilon}(x),\ g(x)\right>=
\sum\limits_{(\varepsilon,j,k)\in\Lambda_{n}}2^{{nj}/{2}}f^{\varepsilon}_{j,k}g^{\Phi,\varepsilon}_{j,k}h^{\varepsilon}_{j,k}.$
By the definitions of $S^{t}_{r,p}(\mathbb{R}^{n})$ and
$H^{-t,p'}(\mathbb{R}^{n})$, using H\"older's inequality, we have
\begin{eqnarray*}
|\left<T_{4,\varepsilon}(x),\ g(x)\right>|
&\leq&\int\sum_{(\varepsilon,j,k)\in\Lambda_{n},
j\geq0}2^{{3jn}/{2}}|f^{\varepsilon}_{j,k}||g^{\Phi,\varepsilon}_{j,k}||h^{\varepsilon}_{j,k}|\chi(2^{j}x-k)dx\\
&\leq&\int\left(\sum_{(\varepsilon,j,k)\in\Lambda_{n},
j\geq0}2^{j(n+2t)}|g^{\Phi,\varepsilon}_{j,k}|^{2}|f^{\varepsilon}_{j,k}|^{2}\chi(2^{j}x-k)\right)^{1/2}\\
&\times&\left(\sum_{(\varepsilon,j,k)\in\Lambda_{n},
j\geq0}2^{j(n-2t)}|h^{\varepsilon}_{j,k}|^{2}\chi(2^{j}x-k)\right)^{1/2}dx\\
&\leq&\left(\int\left(\sum_{(\varepsilon,j,k)\in\Lambda_{n},
j\geq0}2^{j(n+2t)}|g^{\Phi,\varepsilon}_{j,k}|^{2}|f^{\varepsilon}_{j,k}|^{2}\chi(2^{j}x-k)\right)^{p/2}dx\right)^{1/p}\\
&\times&\left(\int \left(\sum_{(\varepsilon,j,k)\in\Lambda_{n},
j\geq0}2^{j(n-2t)}|h^{\varepsilon}_{j,k}|^{2}\chi(2^{j}x-k)\right)^{p'/2}dx\right)^{1/p'}\\
&\leq&\|f\|_{S^{t}_{r,p}(\mathbb{R}^{n})}\|h\|_{H^{-t,p'}(\mathbb{R}^{n})}.
\end{eqnarray*}
Because $f\in S^{t}_{r,p}(\mathbb{R}^{n})$, we can see that
$$|\left<T_{4,\varepsilon}(x),g(x)\right>|\leq
C\|g\|_{H^{t+r,p}(\mathbb{R}^{n})}\|h\|_{H^{-t,p'}(\mathbb{R}^{n})}.$$

Conversely, let
$\tau^{\varepsilon}_{j,k}=f^{\varepsilon}_{j,k}2^{j({n}/{2}+t)}g^{\Phi,\varepsilon}_{j,k}$
and $\tau(x)=\sum\limits_{(\varepsilon,j,k)\in\Lambda_{n},j\geq
0}~\tau^{\varepsilon}_{j,k}\Phi^{\varepsilon}_{j,k}(x)$.  Denote
$h=|\tau|^{p-2}\overline{\tau}$. For
$h(x)=\sum\limits_{(\varepsilon,j,k)\in\Lambda_{n}}h^{\varepsilon}_{j,k}\Phi^{\varepsilon}_{j,k}(x)$,
we write $h_{t}(x)$  as the function
$$h_{t}(x)=\sum\limits_{(\varepsilon,j,k)\in\Lambda_{n},
j\geq0}2^{jt}h^{\varepsilon}_{j,k}\Phi^{\varepsilon}_{j,k}(x)
:=\sum\limits_{(\varepsilon,j,k)\in\Lambda_{n}}(h_{t})^{\varepsilon}_{j,k}\Phi^{\varepsilon}_{j,k}(x).$$
It is easy to see that $h\in L^{p'}(\mathbb{R}^{n})$ is equivalent to
$h_{t}\in H^{-t,p'}(\mathbb{R}^{n})$.


By the wavelet characterization of $H^{t,p}(\mathbb{R}^{n})$, we get
\begin{eqnarray*}
&&\int\left(\sum_{(\varepsilon,j,k)\in\Lambda_{n},
j\geq0}2^{j(n+2t)}|g^{\Phi,\varepsilon}_{j,k}|^{2}|f^{\varepsilon}_{j,k}|^{2}\chi(2^{j}x-k)\right)^{p/2}dx\\
&=&\int\left(\sum_{(\varepsilon,j,k)\in\Lambda_{n},
j\geq0}2^{jn}|\tau^{\varepsilon}_{j,k}|^{2}\chi(2^{j}x-k)\right)^{p/2}dx\\
&=&\int |\tau|^{p}dx=\int\tau h dx\\
&=&\sum_{(\varepsilon,j,k)\in\Lambda_{n}}2^{j(t+{n}/{2})}f^{\varepsilon}_{j,k}g^{\Phi,\varepsilon}_{j,k}h^{\varepsilon}_{j,k}\\
&=&\sum_{(\varepsilon,j,k)\in\Lambda_{n}}2^{{jn}/{2}}f^{\varepsilon}_{j,k}g^{\Phi,\varepsilon}_{j,k}(h_{t})^{\varepsilon}_{j,k}.
\end{eqnarray*}
Further, we can deduce that
\begin{eqnarray*}
\int\left(\sum_{(\varepsilon,j,k)\in\Lambda_{n},
j\geq0}2^{j(n+2t)}|g^{\Phi,\varepsilon}_{j,k}|^{2}|f^{\varepsilon}_{j,k}|^{2}\chi(2^{j}x-k)\right)^{p/2}dx
&=&\left|\left<T_{4,\varepsilon}(x),\ g(x)\right>\right|\\
&\leq&C\|g\|_{H^{t+r,p}(\mathbb{R}^{n})}\|h_{t}\|_{H^{-t,p'}(\mathbb{R}^{n})}\\
&\leq&C\|g\|_{H^{t+r,p}(\mathbb{R}^{n})}\|\tau\|^{p/p'}_{L^{p}(\mathbb{R}^{n})}.
\end{eqnarray*}
Hence $\|\tau\|_{L^{p}}<\infty$ and $f(x)\in
S^{t}_{r,p}(\mathbb{R}^{n})$. This completes the proof of (i) of
this theorem.

For the proof of (ii), similarly, we divide the product $f(x)h(x)$
into the following terms
\begin{eqnarray*}
T_{1,\varepsilon}(x)&=&\sum_{j\in\mathbb{N}, k\in\mathbb{Z}^{n}}\sum_{l\in\mathbb{Z}^{n}}f^{0}_{j,k+l}h^{\varepsilon}_{j,k}\Phi^{0}_{j,k+l}(x)\Phi^{\varepsilon}_{j,k}(x);\\
T_{2,0,\varepsilon, \varepsilon}(x)&=&\sum_{j\in\mathbb{Z},
k\in\mathbb{Z}^{n}}~\sum_{l\in\mathbb{Z}^{n},l\neq0}f^{\varepsilon}_{j,k+l}h^{\varepsilon}_{j,k}\Phi^{\varepsilon}_{j,k+l}(x)\Phi^{\varepsilon}_{j,k}(x)\\
&+&\sum_{j\in\mathbb{N}, k\in\mathbb{Z}^{n}}f^{\varepsilon}_{j,k}h^{\varepsilon}_{j,k}((\Phi^{\varepsilon})_{j,k}(x))^{2}-2^{jn}\Phi(2^{j}x-k));\\
T_{3,\varepsilon}(x)&=&\sum_{j\in\mathbb{N},
k\in\mathbb{Z}^{n}}~\sum_{l\in\mathbb{Z}^{n}_{N}}\sum_{l'\in\mathbb{Z}^{n}}
f^{\varepsilon}_{j+N,2^{N}k+l}h^{0}_{j,k+l'}\Phi^{0}_{j,k+l'}(x)\Phi^{\varepsilon}_{j+N,2^{N}k+l}(x);\\
T_{4,\varepsilon}(x)&=&\sum_{j\in\mathbb{N},
k\in\mathbb{Z}^{n}}f^{\varepsilon}_{j,k}h^{\varepsilon
}_{j,k}2^{jn}\Phi(2^{j}x-k);\\
T_{5}(x)&=&\sum_{k\in \mathbb{Z}^{n}
}~\sum_{l\in\mathbb{Z}^{n}}f^{0}_{0,k}h^{0}_{0,k+l}\Phi^{0}_{j,k}(x)\Phi^{0}_{0,
k+l}(x).
\end{eqnarray*}
For $ \varepsilon, \varepsilon'\in E_{n}$,  $0<s\leq N$ or $s=0$ and
$\varepsilon\neq\varepsilon'$, denote
$$T_{2, s, \varepsilon, \varepsilon'}(x)=\sum_{j\in\mathbb{N},
k\in\mathbb{Z}^{n}}\sum_{l\in\mathbb{Z}^{n}}f^{\varepsilon'}_{j+s,2^{s}k+l}h^{\varepsilon}_{j,k}\Phi^{\varepsilon'}_{j+s,2^{s}k+l}(x)\Phi^{\varepsilon}_{j,k}(x).$$
By  the same method used in the proof of (i), we can get the conclusion.
\end{proof}

\section{A logarithmic condition for multipliers}
By Lemma \ref{le3}, we know that the multiplier space
$X^{t}_{r,p}(\mathbb{R}^{n})\subset M^{t}_{r,p}(\mathbb{R}^{n})$. In
this section, we consider the reverse inclusion relation. At first
we introduce a logarithmic Morrey spaces.
\begin{definition}\label{def7}
Fix $1<p<{n}/{(r+t)}$ and $\tau\geq0$. We say $f(x)\in
M^{t,\tau}_{r,p}(\mathbb{R}^{n})$ if
$\sup\limits_{|Q|=1}|f_{t,Q}|\leq C$ and
$$\int_{Q}\left|(I-\Delta)^{\frac{t}{2}}f(x)-f_{t,Q}\right|^{p}dx
\leq C\left|1-\log_{2}|Q|\right|^{p\tau }|Q|^{1-{p(r+t)}/{n}},$$
for any cube $Q$ with $|Q|\leq 1$.
\end{definition}

Similar to Theorem \ref{th7},  we have the following wavelet
characterization of $M^{t,\tau}_{r,p}(\mathbb{R}^{n})$.
\begin{theorem}\label{th8}
Given $t,\tau\geq 0, r>0$, $1<p<{n}/{(r+t)}$.
$f(x)=\sum\limits_{(\varepsilon,j,k)\in\Lambda_{n}}f^{\varepsilon}_{j,k}\Phi^{\varepsilon}_{j,k}(x)$
belongs to the logarithmic Morrey spaces
$M^{t,\tau}_{r,p}(\mathbb{R}^{n})$ if and only if
$$\int_{Q}\left(\sum_{\varepsilon\in E_{n}, Q_{j,k}\subset
Q}2^{j(n+2t)}|f^{\varepsilon}_{j,k}|^{2}\chi(2^{j}x-k)\right)^{p/2}dx
\leq C (1-\log_{2}|Q|)^{-p\tau}|Q|^{1-{p(r+t)}/{n}},$$ where
$Q\in\Omega$ with $|Q|\leq 1$.
\end{theorem}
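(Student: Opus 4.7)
The plan is to adapt the wavelet characterization of the ordinary Morrey space (Theorem \ref{th7}) by carrying the scale-only logarithmic factor $(1-\log_2|Q|)^{-p\tau}$ through the cube-by-cube estimates. Because this factor depends on $|Q|$ alone and not on the position of the cube, it multiplies both sides of every dyadic estimate transparently, is preserved (up to universal constants) under bounded dilations $Q \leadsto \tilde Q$, and survives a telescoping sum over dyadic ancestors because the geometric decay in scale beats any polynomial in the log factor. Throughout, I write $\widetilde f := (I-\Delta)^{t/2}f$.

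My first step is to reduce to the case $t=0$ via the almost local operator $T^{t}$ of Definition \ref{def8}. Lemma \ref{le6} shows that the matrix of $(I-\Delta)^{t/2}$ in the wavelet basis satisfies the kernel estimate (\ref{eq8}), and Lemma \ref{le7} then gives boundedness between the relevant weighted square-function norms. This converts the weight $2^{j(n+2t)}$ on the wavelet side into $2^{jn}$ acting on the coefficients of $\widetilde f$, and turns the Morrey side into the plain $L^p$-oscillation of $\widetilde f$ about its mean on $Q$. Both sides retain the factor $(1-\log_2|Q|)^{-p\tau}|Q|^{1-p(r+t)/n}$ unchanged.

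For the direction $(\Rightarrow)$, fix a dyadic cube $Q \in \Omega$ with $|Q|\le 1$. Each wavelet $\Phi^{\varepsilon}_{j,k}$ with $\varepsilon\in E_n$ and $Q_{j,k}\subset Q$ is supported in $\tilde Q_{j,k}\subset \tilde Q$ and has vanishing moments, so
$$\widetilde f^{\varepsilon}_{j,k} = \bigl\langle \widetilde f - \widetilde f_{\tilde Q},\ \Phi^{\varepsilon}_{j,k}\bigr\rangle.$$
Applying Lemma \ref{le2}(i) (the case $F^{0,2}_p=L^p$) to the truncation $(\widetilde f - \widetilde f_{\tilde Q})\,\chi_{\tilde Q}$ yields
$$\int_Q \Bigl(\sum_{\varepsilon\in E_n,\ Q_{j,k}\subset Q} 2^{jn}\,|\widetilde f^{\varepsilon}_{j,k}|^2\,\chi(2^j x-k)\Bigr)^{p/2} dx \le C \int_{\tilde Q} \bigl|\widetilde f(x) - \widetilde f_{\tilde Q}\bigr|^p\, dx.$$
Since $|\tilde Q|\approx |Q|$ and $1-\log_2|\tilde Q|\approx 1-\log_2|Q|$, the Morrey hypothesis applied to $\tilde Q$ bounds the right-hand side by $C(1-\log_2|Q|)^{-p\tau}|Q|^{1-p(r+t)/n}$, giving the wavelet bound.

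For the direction $(\Leftarrow)$, fix $Q$ with $|Q|\le 1$ and embed it in a dyadic cube $\hat Q\in\Omega$ with $|\hat Q|\approx |Q|$. Split $\widetilde f = G + H$, where $G$ collects the wavelet terms with $\varepsilon\in E_n$ and $Q_{j,k}\subset \hat Q$, and $H$ is the complement. The hypothesis bounds the square function of $G$ by $C(1-\log_2|Q|)^{-p\tau}|Q|^{1-p(r+t)/n}$, so Lemma \ref{le2}(i) transfers this to $\|G\|_{L^p(Q)}$. The function $H$ is built from wavelets either in $V_0$ or at scales coarser than $\hat Q$, hence satisfies a Lipschitz-type estimate on $Q$; its $L^p$-oscillation telescopes over the dyadic ancestors $\hat Q\subset\hat Q^{(1)}\subset\hat Q^{(2)}\subset\cdots$, each contributing a wavelet-side term that already carries a log factor comparable to $(1-\log_2|Q|)^{-p\tau}$, and the geometric decay in scale sums the series. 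The main obstacle is the careful accounting of the off-diagonal wavelets $\Phi^{\varepsilon}_{j,k}$ whose dyadic cubes $Q_{j,k}$ are adjacent to but not contained in $Q$: the rapid decay of Meyer (or regular Daubechies) wavelets combined with the almost-orthogonality of Lemma \ref{le6} controls these tails, and the scale-only character of $(1-\log_2|Q|)^{-p\tau}$ guarantees that the off-diagonal summation disturbs neither the log factor nor the power $|Q|^{1-p(r+t)/n}$ on the right-hand side.
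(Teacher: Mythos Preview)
Your proposal is correct and takes essentially the same approach as the paper: the paper's proof of Theorem~\ref{th8} consists of a single sentence deferring to the argument for Theorem~\ref{th7} (itself cited from \cite{YSY}) and noting that the proof transfers verbatim because the scale-only factor $(1-\log_2|Q|)^{-p\tau}$ rides along harmlessly through the cube-by-cube estimates. You have simply supplied the details the paper omits; one small imprecision is that in the telescoping over ancestors the log factor attached to $\hat Q^{(\ell)}$ is not individually comparable to $(1-\log_2|Q|)^{-p\tau}$, but as you correctly note the geometric decay in scale forces the last term to dominate, which does carry the right factor.
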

\begin{proof}
Similar to that of Theorem \ref{th7}, the proof of this theorem can
be obtained by the characterization of Triebel-Lizorkin spaces (see
Lemma \ref{le2} ) . We omit the detail.
\end{proof}

In \cite{Fe},  C. Fefferman established the following relation:
$$M^{t}_{r,q}(\mathbb{R}^{n})\subset
X^{t}_{r,p}(\mathbb{R}^{n})\subset M^{t}_{r,p}(\mathbb{R}^{n}),$$
where $q>p>1$.  In this section, we use wavelet characterization to
give a logarithmic type inclusion. Let $r>0$, $t\geq0$,
$1<p<{n}/{(r+t)}$ and $\tau>{1}/{p'}$. We prove that
$M^{t,\tau}_{r,p}(\mathbb{R}^{n})$ is a subspace of
$X^{t}_{r,p}(\mathbb{R}^{n})$ in Theorem \ref{th5}. Hence, for $
q>p$,
$$M^{t}_{r,q}(\mathbb{R}^{n})\subset M^{t,\tau}_{r,p}(\mathbb{R}^{n})\subset X^{t}_{r,p}(\mathbb{R}^{n})
\subset M^{t}_{r,p}(\mathbb{R}^{n})=M^{t,0}_{r,p}(\mathbb{R}^{n}).$$

\begin{lemma}\label{le10}
If $\tau>0$, $r>0$, $t\geq0$ and $1<p<{n}/{(r+t)}$, $f\in
M^{t,\tau}_{r,p}(\mathbb{R}^{n})$ implies that
$|f^{\varepsilon}_{j,k}|\leq C2^{j(r-{n}/{2})}(1+j)^{-\tau}$.
\end{lemma}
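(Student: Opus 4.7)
The plan is to mimic the proof of Lemma \ref{le5}, splitting into the cases $\varepsilon\in E_n$ and $\varepsilon=0$, and upgrading from the crude BMO bound to the sharper wavelet characterization of $M^{t,\tau}_{r,p}(\mathbb{R}^n)$ furnished by Theorem \ref{th8}.

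For $\varepsilon\in E_n$, I would apply the wavelet inequality of Theorem \ref{th8} to the single dyadic cube $Q=Q_{j,k}$, which has $|Q|=2^{-jn}$ and $1-\log_2|Q|=1+jn$. Inside the sum under the $p/2$-power, retain only the single term indexed by $(\varepsilon,j,k)$; every other term is non-negative, so this produces a lower bound. Since $\chi(2^jx-k)\equiv 1$ on $Q_{j,k}$, integrating over $Q$ collapses the left-hand side to $2^{jp(n+2t)/2}|f^{\varepsilon}_{j,k}|^p\cdot 2^{-jn}$. Comparing with the right-hand side $C(1+jn)^{-p\tau}\,2^{-jn+jp(r+t)}$ and using $p(r+t)-p(n+2t)/2=p(r-n/2)$, taking the $p$-th root gives
$$|f^{\varepsilon}_{j,k}|\leq C(1+j)^{-\tau}\,2^{j(r-n/2)}.$$

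For $\varepsilon=0$, I would exploit that $\Phi^{0}_{j,k}\in V_j$ is orthogonal to every wavelet $\Phi^{\varepsilon'}_{j',k'}$ with $\varepsilon'\in E_n$ and $j'\geq j$, so only the scales $j'<j$ contribute:
$$f^{0}_{j,k}=\Bigl\langle \sum_{j'<j,\,\varepsilon'\in E_n,\,k'}f^{\varepsilon'}_{j',k'}\Phi^{\varepsilon'}_{j',k'},\ \Phi^{0}_{j,k}\Bigr\rangle.$$
Using the estimate just established for $\varepsilon'\in E_n$ together with the pointwise bound $\sum_{k'}|\Phi^{\varepsilon'}_{j',k'}(x)|\leq C\,2^{j'n/2}$ (from the compact support of the Daubechies wavelet), one gets
$$\Bigl|\sum_{j'<j,\,\varepsilon',k'}f^{\varepsilon'}_{j',k'}\Phi^{\varepsilon'}_{j',k'}(x)\Bigr|\leq C\sum_{j'<j}2^{j'r}(1+j')^{-\tau}\leq C\,2^{jr}(1+j)^{-\tau},$$
the last step because $r>0$ makes the sum geometric with ratio $2^r>1$, hence dominated by its last term. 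Pairing with $\Phi^{0}_{j,k}$, whose $L^1$-norm equals $2^{-jn/2}\|\Phi^0\|_{L^1}$, produces the claimed bound.

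No step poses a real obstacle: the argument is structurally identical to Lemma \ref{le5}. The only point demanding care is tracking the exponent bookkeeping and verifying that the logarithmic factor $(1-\log_2|Q_{j,k}|)^{-p\tau}=(1+jn)^{-p\tau}$ converts, after the $p$-th root, into the expected $(1+j)^{-\tau}$ decay (the factor $n^{-\tau}$ is absorbed into $C$).
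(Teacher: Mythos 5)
Your proposal is correct and follows essentially the same route as the paper: for $\varepsilon\in E_n$ the paper likewise specializes the wavelet characterization of Theorem \ref{th8} to the single cube $Q=Q_{j,k}$ and keeps one term (it merely inserts an intermediate $2^{j(n-2r)}$-weighted inequality before doing so), and for $\varepsilon=0$ it uses the identical reduction to scales $j'<j$ with the geometric summation $\sum_{j'<j}2^{j'r}(1+j')^{-\tau}\leq C\,2^{jr}(1+j)^{-\tau}$ and the $L^{1}$-bound on $\Phi^{0}_{j,k}$. The exponent bookkeeping in your argument checks out.
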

\begin{proof}
Because $f\in M^{t,\tau}_{r,p}(\mathbb{R}^{n})$, then for any
dyadic cube $Q\in \Omega$,
\begin{eqnarray*}
&&\int_{Q}\left(\sum_{\varepsilon\in E_{n}, Q_{j,k}\subset
Q}2^{j(n+2t)}|f^{\varepsilon}_{j,k}|^{2}\chi(2^{j}x-k)\right)^{p/2}dx
\leq C(1-\log_{2}|Q|)^{-p\tau}|Q|^{1-{p(r+t)}/{n}}.
\end{eqnarray*}
We have
\begin{eqnarray*}
&&\int_{Q}\left(\sum_{\varepsilon\in E_{n}, Q_{j,k}\subset
Q}2^{j(n-2r)}|f^{\varepsilon}_{j,k}|^{2}\chi(2^{j}x-k)\right)^{p/2}dx\\
&\leq&C|Q|^{{p(r+t)}/{n}}\int_{Q}\left(\sum_{\varepsilon\in
E_{n}, Q_{j,k}\subset
Q}2^{j(n+2t)}|f^{\varepsilon}_{j,k}|^{2}\chi(2^{j}x-k)\right)^{p/2}dx\\
&\leq&C|Q|^{{p(r+t)}/{n}}|Q|^{1-{p(r+t)}/{n}}
(1-\log_{2}|Q|)^{-\tau p}\\
&\leq&C|Q|(1-\log_{2}|Q|)^{-\tau p}.
\end{eqnarray*}
For $\varepsilon\in E_{n}$, $j\in\mathbb{N}$ and $k\in
\mathbb{Z}^{n}$, take $Q=Q_{j,k}$. By the wavelet characterization of
$BMO^{r}(\mathbb{R}^{n})$, we get
\begin{eqnarray*}
\int\left(2^{jn-2jr}|f^{\varepsilon}_{j,k}|^{2}\chi(2^{j}x-k)\right)^{p/2}dx
&\leq&C2^{-jn}(1-\log_{2}2^{-jn})^{-\tau p}
\leq C2^{-jn}(1+j)^{-\tau p}.
\end{eqnarray*}
Then we have
$|Q_{j,k}|(2^{j({n}/{2}-r)}|f^{\varepsilon}_{j,k}|)^{p}\leq
C2^{-jn}(1+j)^{-\tau p}$ and $|f^{\varepsilon}_{j,k}|\leq
C2^{j(r-{n}/{2})}(1+j)^{-\tau}.$

 When $\varepsilon=0$,
\begin{eqnarray*}
f^{0}_{j,k}&=&\left<\sum_{(\varepsilon',j,k)\in\Lambda_{n}}f^{\varepsilon'}_{j',k'}\Phi^{\varepsilon'}_{j',k'},\
\Phi^{0}_{j,k}\right>
=\left<\sum_{j'<j}f^{\varepsilon'}_{j',k'}\Phi^{\varepsilon'}_{j',k'},\
\Phi^{0}_{j,k}\right>.
\end{eqnarray*}
Since
$\left|\sum\limits_{j'<j}f^{\varepsilon'}_{j',k'}\Phi^{\varepsilon'}_{j',k'}(x)\right|\leq
C2^{(r-{n}/{2})j}(1+j)^{-\tau},$ then
$$|f^{0}_{j,k}|\leq C\left<2^{rj}(1+j)^{-\tau},\ |\Phi^{0}_{j,k}|\right>\leq C2^{(r-{n}/{2})j}(1+j)^{-\tau}.$$
\end{proof}
For $\beta=(\beta_{1}, \beta_{2},\cdots,\beta_{n})$, $\beta_{i}\in\mathbb{N}_{+}$, define $f_{\beta}(x)=({\partial}/{\partial x})^{\beta}f$. We
have the following two lemmas.
\begin{lemma}\label{le11}
For $\tau>0$, $r>0$, $t\geq0$ and $1<p<{n}/{(r+t)}$, the function
$f\in M^{t,\tau}_{r,p}(\mathbb{R}^{n})$ implies its derivative
$f_{\beta}\in M^{t-|\beta|,\tau}_{r+|\beta|, p}(\mathbb{R}^{n})$,
where $|\beta|=\sum\limits^{n}_{i=1}\beta_{i}$.
\end{lemma}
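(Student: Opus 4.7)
The plan is to reduce the claim to the wavelet characterization in Theorem \ref{th8} and then express the wavelet coefficients of $f_\beta$ as an almost-diagonal operator applied to the wavelet coefficients of $f$. First, I would compute
$$(f_\beta)^{\varepsilon}_{j,k} = \langle \partial^\beta f, \Phi^\varepsilon_{j,k}\rangle = (-1)^{|\beta|}\langle f, \partial^\beta\Phi^\varepsilon_{j,k}\rangle = (-1)^{|\beta|}2^{j|\beta|}\langle f, (\partial^\beta\Phi^\varepsilon)_{j,k}\rangle,$$
and set $\widetilde{f}^{\varepsilon}_{j,k} = 2^{-j|\beta|}(f_\beta)^{\varepsilon}_{j,k}$. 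Expanding $f$ in its wavelet series gives
$$\widetilde{f}^{\varepsilon}_{j,k} = (-1)^{|\beta|}\sum_{(\varepsilon',j',k')\in\Lambda_n} b^{\varepsilon,\varepsilon'}_{j,k,j',k'}\, f^{\varepsilon'}_{j',k'}, \quad b^{\varepsilon,\varepsilon'}_{j,k,j',k'} := \langle (\partial^\beta\Phi^\varepsilon)_{j,k}, \Phi^{\varepsilon'}_{j',k'}\rangle.$$
Since the wavelets are taken $m$-regular with $m$ large, the functions $\partial^\beta\Phi^\varepsilon$ are smooth, compactly supported, and retain vanishing moments of sufficiently high order; hence Lemma \ref{le6} applies and $b^{\varepsilon,\varepsilon'}_{j,k,j',k'}$ obeys the almost-diagonal decay estimate \eqref{eq8} for any exponent $s$ with $|s|<\mu-|\beta|$, in particular for some $s>r+t$.

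Next, I would observe that the target estimate for $f_\beta\in M^{t-|\beta|,\tau}_{r+|\beta|,p}(\mathbb{R}^n)$ via Theorem \ref{th8} requires, for every dyadic cube $Q$ with $|Q|\leq 1$,
$$\int_Q\Bigl(\sum_{\varepsilon\in E_n, Q_{j,k}\subset Q} 2^{j(n+2(t-|\beta|))}|(f_\beta)^\varepsilon_{j,k}|^2 \chi(2^jx-k)\Bigr)^{p/2}\,dx \leq C(1-\log_2|Q|)^{-p\tau}|Q|^{1-p(r+t)/n},$$
where I have used the crucial identity $(r+|\beta|)+(t-|\beta|)=r+t$ so that the power of $|Q|$ on the right is unchanged. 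Substituting $(f_\beta)^\varepsilon_{j,k} = \pm 2^{j|\beta|}\widetilde{f}^\varepsilon_{j,k}$ collapses the weight $2^{j(n+2(t-|\beta|))}|(f_\beta)^\varepsilon_{j,k}|^2$ to $2^{j(n+2t)}|\widetilde{f}^\varepsilon_{j,k}|^2$. Thus the task reduces to showing that the wavelet-coefficient operator $\{f^{\varepsilon'}_{j',k'}\}\mapsto\{\widetilde f^\varepsilon_{j,k}\}$ maps $M^{t,\tau}_{r,p}(\mathbb{R}^n)$ to itself, i.e., preserves the wavelet characterization of Theorem \ref{th8}.

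To handle that final step I would proceed in the spirit of Lemma \ref{le7} (where the analogous statement is made for $H^{r,p}$), splitting the sum over $(\varepsilon',j',k')$ into the two regimes $j\geq j'$ and $j<j'$. In each regime the almost-diagonal bound on $b^{\varepsilon,\varepsilon'}_{j,k,j',k'}$ with $s>r+t$ produces a geometric factor $2^{-|j-j'|\sigma}$ (for some $\sigma>0$) together with a discrete shifted maximal-function type expression in $k$, which is controlled by the Hardy–Littlewood maximal operator applied to the localized square function $S_tf_{j'}$ introduced in the proof of Theorem \ref{th3}. Summing the resulting geometric series and integrating over $Q$, the Morrey bound on $f$ at the appropriate subcubes transfers to the same bound on $\widetilde f$, with the logarithmic factor $(1-\log_2|Q|)^{-p\tau}$ passing through unchanged because $|Q|$ is the same. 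The main obstacle is precisely this last estimate: one must verify that the off-diagonal decay of $b^{\varepsilon,\varepsilon'}_{j,k,j',k'}$ interacts properly with the localized testing over $Q$ so that the logarithmic weight is not destroyed when passing from subcubes of various scales back to $Q$; this is where some care, analogous to the proof of Theorem \ref{th3}, but adapted to the weighted Morrey setting, is required.
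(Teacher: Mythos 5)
Your proposal is correct and its core reduction is the same as the paper's: the weight collapse $2^{j(n+2(t-|\beta|))}\cdot 2^{2j|\beta|}=2^{j(n+2t)}$ combined with the identity $(r+|\beta|)+(t-|\beta|)=r+t$, which leaves the right-hand side of the Theorem \ref{th8} characterization, including the logarithmic factor $(1-\log_2|Q|)^{-p\tau}$, completely unchanged. Where you genuinely diverge is in how the wavelet coefficients of $f_\beta$ are related to those of $f$: the paper simply writes $f^{\beta,\varepsilon}_{j,k}=2^{j|\beta|}f^{\varepsilon}_{j,k}$ and stops, i.e.\ it treats $\partial^\beta$ as acting diagonally on the wavelet coefficients, whereas you correctly point out that for a fixed orthonormal wavelet basis this is not a literal identity, and you interpose the almost-diagonal matrix $b^{\varepsilon,\varepsilon'}_{j,k,j',k'}=\langle(\partial^\beta\Phi^\varepsilon)_{j,k},\Phi^{\varepsilon'}_{j',k'}\rangle$ controlled by Lemma \ref{le6}, reducing the problem to the boundedness of such matrices on $M^{t,\tau}_{r,p}(\mathbb{R}^n)$. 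That extra layer is the honest justification of the step the paper merely asserts, and it is the right fix; the cost is that the Morrey-space analogue of Lemma \ref{le7} (boundedness of operators satisfying \eqref{eq8} on the logarithmic Morrey space, with the testing integral localized to a cube $Q$) is exactly the point you label ``the main obstacle'' and leave as a sketch. It does hold --- one splits, for fixed $Q$, the sum over $(\varepsilon',j',k')$ into cubes $Q_{j',k'}$ contained in a fixed dilate of $Q$ (handled by the $H^{t,p}$ argument of Lemma \ref{le7} applied locally) and a tail controlled by the pointwise bound $|f^{\varepsilon}_{j,k}|\leq C2^{j(r-n/2)}(1+j)^{-\tau}$ of Lemma \ref{le10} together with the decay in $|k-2^{j-j'}k'|$ --- but as written your argument establishes this only in outline, so strictly speaking the one nontrivial estimate of your route is asserted rather than proved, much as the diagonal identity is asserted rather than proved in the paper.
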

\begin{proof}
If $f\in M^{t,\tau}_{r,p}(\mathbb{R}^{n})$ and
$f(x)=\sum\limits_{(\varepsilon, j,
k)\in\Lambda_{n}}f^{\varepsilon}_{j,k}\Phi^{\varepsilon}_{j,k}(x)$,
 by Theorem \ref{th8}, we have
$$\int\left(\sum_{\varepsilon\in E_{n}, Q_{j,k}\subset Q}2^{j(n+2t)}|f^{\varepsilon}_{j,k}|^{2}\chi(2^{j}x-k)\right)^{p/2}dx
\leq C(1-\log_{2}|Q|)^{-p\tau}|Q|^{1-{p(r+t)}/{n}}.$$ Denote by
$f^{\varepsilon, \beta}_{j,k}$ the wavelet coefficients of
$f_{\beta}(x)$. We can get $f^{\beta,
\varepsilon}_{j,k}=2^{j|\beta|}f^{\varepsilon}_{j,k}$ and
\begin{eqnarray*}
&&\int\left(\sum_{\varepsilon\in E_{n}, Q_{j,k}\subset
Q}2^{j(n+2t-2|\beta|)}|f^{\beta,\varepsilon}_{j,k}|^{2}\chi(2^{j}x-k)\right)^{p/2}dx\\
&\leq&\int\left(\sum_{\varepsilon\in E_{n}, Q_{j,k}\subset
Q}2^{j(n+2t)}|f^{\varepsilon}_{j,k}|^{2}\chi(2^{j}x-k)\right)^{p/2}dx\\
&\leq&C(1-\log_{2}|Q|)^{-p\tau}|Q|^{1-\frac{p[(r+|\beta|)+(t-|\beta|)]}{n}}.
\end{eqnarray*}
This implies that $f_{\beta}(x)\in
M^{t-|\beta|,\tau}_{r+|\beta|,p}(\mathbb{R}^{n})$.
\end{proof}

To get the sufficient condition for multiplier spaces, we need to
consider carefully the relationship of different dyadic cubes
relative to combination atoms. Because of this reason, we use
Daubechies wavelets in the rest of this section.

If $g_{u}(x)$ is a $(m, 2^{u}, E_{u})-$combination atom, then we
denote the number of biggest dyadic cubes in $E_{u}$ by $i_{1}$.
Denote by $F_{u,1}$ the set $\left\{i\in\mathbb{N}, i=1,\cdots,
i_{1}\right\}$. If $i\in F_{u,1}$, we denote such cube by $Q_{u, 1,
i}$. The volume of $Q_{u, 1, i}$ is denoted by $2^{-nj_{u,1}}$, that
is, $|Q_{u,1,i}|=2^{-nj_{u,1}}$. Denote $E_{u,1}=E_{u}\setminus
(\bigcup\limits_{i\in F_{u,1}}Q_{u,1,i})$.

We denote the number of biggest dyadic cubes in $E_{u,1}$ by
$i_{2}$. Denote by $F_{u,2}$ the set $\left\{i\in\mathbb{N},
i=1,\cdots, i_{2}\right\}$. If $i\in F_{u,2}$, we denote such cube
by $Q_{u,2,i}$. The  volume of $Q_{u,2,i}$ is denoted by
$2^{-nj_{u,2}}$, that is, $|Q_{u,2,i}|=2^{-nj_{u,2}}$. Denote
$E_{u,2}=E_{u,1}\setminus(\bigcup\limits_{i\in F_{u,2}}Q_{u,2,i})$.

We continue this process until there exists some $s$ such that
$E_{u, s+1}$ is empty.  For $s'\geq s+1$, we denote $i_{s'}=0$ and
 $F_{u, s'}$ and $E_{u, s'}$  are empty sets. Otherwise we
continue until infinitely. Then $E_{u}=\bigcup\limits_{s\geq1, i\in
F_{u,s}}Q_{u,s,i}$ and $g_{u}(x)=\sum\limits_{s\geq1, i\in
F_{u,s}}g_{u,s,i}(x)$, where
$g_{u,s,i}(x)=\sum\limits_{Q_{j,k}\subset
Q_{u,s,i}}g^{\varepsilon}_{j,k}\Phi^{\varepsilon}_{j,k}(x)$.

To compute the norm of $f(x)g_{u}(x)$, we need to find out a special
set of dyadic cubes denoted by
$\left\{Q_{u,s,i,k}\right\}_{(s,i,k)\in H_{u}}$ such that
$\text{supp}g_{u}\subset \bigcup\limits_{(s,i,k)\in
H_{u}}Q_{u,s,i,k}$. $g_{u}(x)$ is nearly $L^{\infty}$ function on
$Q_{u,s,i,k}$ and satisfies the estimate of Lemma \ref{le13}.
We divide such process into the following three steps.

Step 1. For $\forall s\geq1$, if $i\in F_{u,s}$ and
$k\in\mathbb{Z}^{n}$ with $|k|<\sqrt{n}2^{(M+3)n}$, we denote
$(i,k)\in G_{u,s}$.  Denote $F_{u}=\left\{(s,i,k),\ s\geq1, (i,k)\in
G_{u,s}\right\}$. For $\forall (s,i,k)\in F_{u}$, we denote
$Q_{u,s,i,k}=2^{-j_{u,s}}k+Q_{u,s,i}$. For $\forall s\geq1$, denote
$\tilde{E}^{0}_{u,s}=\bigcup\limits_{(i,k)\in G_{u,s}}Q_{u,s,i,k}$.
In the next step, we choose a special subcover to the support of $g_{u}(x)$.

Step 2. We define now $H_{u,s}(s\geq 1)$, $\tilde{E}^{1}_{u,s}(s\geq 1)$ and $G^{1}_{u,s}(s\geq 2)$.

For $s=1$, denote $H_{u,1}=G_{u,1}$ and
$\tilde{E}^{1}_{u,s}=\tilde{E}^{0}_{u,s}$.
For $s=2$, denote $(i,k)\in G^{1}_{u,2}$, if there exists $0\leq
j\leq j_{u,1}$, $l\in\mathbb{Z}^{n}$ such that $Q_{j, l}\subset
\bigcup\limits_{|k-k'|\leq \sqrt{n}2^{(M+2)n}}Q_{u, 2, i, k'}$ and
$<g_{u}, \Phi^{0}_{j,l}(x)>\neq0$. We know that
$\bigcup\limits_{(i,k)\in G^{1}_{u,2}}Q_{u,2,i,k}\subset
\tilde{E}^{1}_{u,1}$. Denote $H_{u,2}=G_{u,2}\setminus G^{1}_{u,2}$
and $\tilde{E}^{1}_{u,2}=\bigcup\limits_{(i,k)\in
H_{u,2}}Q_{u,2,i,k}$.

For $s=3$, denote $(i,k)\in G^{1}_{u,3}$, if there exists $0\leq
j\leq j_{u,2}$, $l\in\mathbb{Z}^{n}$ such that
$Q_{j,l}\subset\bigcup\limits_{|k-k'|\leq\sqrt{n}2^{(M+2)n}}Q_{u, 2,
i, k'}$ and $<g_{u}(x),\ \Phi^{0}_{j_{u,s}, l}(x)>\neq0$. We know
that $\bigcup\limits_{(i,k)\in G^{1}_{u,3}}Q_{u,3,i,k}\subset
\bigcup\limits_{1\leq s\leq2}\tilde{E}^{1}_{u,s}$. Denote
$H_{u,3}=G_{u,3}\setminus G^{1}_{u,3}$ and
$\tilde{E}^{1}_{u,3}=\bigcup\limits_{(i,k)\in H_{u,3}}Q_{u,3,i,k}$.

We continue this process until infinity. For $s\geq2$,
maybe, a party of $G^{1}_{u,s}$, $H_{u,s}$ and $\tilde{E}^{1}_{u,s}$ are empty set.

Step 3.  Let $H_{u}=\left\{(s,i,k),\ s\geq1, (i,k)\in H_{u,s}\right\}$. It is
easy to see that the support of $g_{u}(x)$ is contained in
$\bigcup\limits_{s\geq1}\tilde{E}^{1}_{u,s}$.

For a $(t+r, 2^{u}, E_{u})$-combination atom $g_{u}(x)$ and $g^{0}_{j,k}=<g_{u},
\Phi^{0}_{j,k}>$, we have the following estimate.
\begin{lemma}\label{le13}
Given $r>0$, $t\geq0$, $1<p<{n}/{(t+r)}$ and $s\geq1$. For
$(i,m')\in H_{u,s}$ and $Q_{j,k}\subset Q_{u,s,i,m'}$, we have
$|g^{0}_{j,k}|\leq C2^{u-{nj}/{2}}2^{-(t+r)j_{u,s}}.$
\end{lemma}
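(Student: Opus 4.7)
The plan is to bound $g^{0}_{j,k}=\langle g_{u},\Phi^{0}_{j,k}\rangle$ via the Hölder-type estimate $|g^{0}_{j,k}|\le \|g_{u}\|_{L^{\infty}(B)}\,\|\Phi^{0}_{j,k}\|_{L^{1}}$, where $B=\mathrm{supp}\,\Phi^{0}_{j,k}$. Since $\|\Phi^{0}_{j,k}\|_{L^{1}}\le C\,2^{-nj/2}$ and $Q_{j,k}\subset Q_{u,s,i,m'}$ with $j\ge j_{u,s}$ forces $B$ into a controlled neighborhood of $Q_{u,s,i,m'}$ of size $\lesssim 2^{-j_{u,s}}$, the whole task reduces to showing $\|g_{u}\|_{L^{\infty}(B)}\le C\,2^{u}\,2^{-(t+r)j_{u,s}}$.

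First I would use $(i,m')\in H_{u,s}$ to kill the low-frequency part of $g_{u}$ on $B$. Writing $g_{u}=P_{j_{u,s-1}}g_{u}+\sum_{j'\ge j_{u,s-1}}Q_{j'}g_{u}$ and expanding $P_{j_{u,s-1}}g_{u}=\sum_{l}g^{0}_{j_{u,s-1},l}\Phi^{0}_{j_{u,s-1},l}$, only the $l$ whose Daubechies support meets $B$ can contribute there; the oversized constant $\sqrt{n}\,2^{(M+2)n}$ in the definition of $G^{1}_{u,s}$ is designed so that each such $Q_{j_{u,s-1},l}$ fits inside the enlarged neighborhood $\bigcup_{|m'-k'|\le\sqrt{n}2^{(M+2)n}}Q_{u,s,i,k'}$, forcing $g^{0}_{j_{u,s-1},l}=0$ by hypothesis. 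Thus $P_{j_{u,s-1}}g_{u}\equiv 0$ on $B$, so that $g_{u}(x)=\sum_{j'\ge j_{u,s-1}}Q_{j'}g_{u}(x)$ for $x\in B$.

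Next, the combination-atom condition $S_{t+r}g_{u}(x)\le 2^{u}$ yields $|g^{\varepsilon'}_{j',k'}|\le 2^{u}\,2^{-j'(n/2+t+r)}$ for every $(\varepsilon',j',k')$ with $Q_{j',k'}\subset E_{u}$, and the compact support of the Daubechies wavelets then gives the pointwise bound $|Q_{j'}g_{u}(x)|\le C\,2^{u}\,2^{-j'(t+r)}$. The geometric series $\sum_{j'\ge j_{u,s}}2^{-j'(t+r)}\le C\,2^{-(t+r)j_{u,s}}$ (convergent since $t+r>0$) already supplies the desired factor. The delicate step, which I expect to be the main obstacle, is the removal of the middle scales $j_{u,s-1}\le j'<j_{u,s}$: every nonzero wavelet in $g_{u}$ at such a scale has $Q_{j',k'}\subset Q_{u,s'',i''}$ with $s''\le s-1$ (forced by $j_{u,s''}\le j'<j_{u,s}$) and these $Q_{u,s'',i''}$ are disjoint from $Q_{u,s,i}$. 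The role of the constant $2^{(M+2)n}$ in $H_{u,s}$ is to ensure that the Daubechies reach $2^{-j'+M}$ of any such middle-scale wavelet cannot touch $B$ when $(i,m')\notin G^{1}_{u,s}$; this geometric separation will be carried out by a careful comparison between the wavelet-support radius at each middle scale and the enlarged neighborhood of $Q_{u,s,i,m'}$.

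Once the middle-scale wavelets are excluded on $B$, only $j'\ge j_{u,s}$ survive and the geometric sum gives $\|g_{u}\|_{L^{\infty}(B)}\le C\,2^{u}\,2^{-(t+r)j_{u,s}}$; multiplying by $\|\Phi^{0}_{j,k}\|_{L^{1}}\le C\,2^{-nj/2}$ produces the stated bound $|g^{0}_{j,k}|\le C\,2^{u-nj/2}\,2^{-(t+r)j_{u,s}}$.
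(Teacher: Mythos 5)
Your argument is correct and follows essentially the same route as the paper: use the membership $(i,m')\in H_{u,s}$ to discard all scales $j'<j_{u,s}$ of $g_{u}$ near $Q_{u,s,i,m'}$, bound the surviving coefficients by $|g^{\varepsilon'}_{j',k'}|\le C2^{u}2^{-j'(n/2+t+r)}$ from the combination-atom condition, and sum the geometric series in $j'\ge j_{u,s}$ against $\|\Phi^{0}_{j,k}\|_{L^{1}}\lesssim 2^{-nj/2}$. The middle-scale exclusion you single out as the delicate point is precisely the step the paper also leaves implicit, dispatching it with ``by the definition of $H_{u,s}$''.
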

\begin{proof}
By the definition of $H_{u,s}$, we have
\begin{eqnarray*}
g^{0}_{j,k}&=&\left<g_{u}(x),\ \Phi^{0}_{j,k}(x)\right>\\
&=&\left<\sum_{(\varepsilon', j', k')\in\Lambda_{n}}g^{\varepsilon'}_{j',k'}\Phi^{\varepsilon'}_{j',k'}(x),\ \Phi^{0}_{j,k}(x)\right>\\
&=&\left<\sum_{(\varepsilon', j', k')\in\Lambda_{n}}\sum_{j'\geq j_{u,s}}g^{\varepsilon'}_{j',k'}\Phi^{\varepsilon'}_{j',k'}(x),\ \Phi^{0}_{j,k}(x)\right>.
\end{eqnarray*}
Because $g_{u}$ is a $(m, 2^{u},
E_{u})$-combination atom, $S_{t+r}(g_{u})(x)\leq C2^{u}$. Hence for every $(\varepsilon', j', k')$, $|g^{\varepsilon'}_{j',k'}|\leq C2^{u}2^{-j'(r+t+{n}/{2})}$. We can obtain
\begin{eqnarray*}
|g^{0}_{j,k}|&=&\left|\left<\sum_{(\varepsilon', j', k')\in\Lambda_{n}}\sum_{j'\geq j_{u,s}}g^{\varepsilon'}_{j',k'}\Phi^{\varepsilon'}_{j',k'}(x),\ \Phi^{0}_{j,k}(x)\right>\right|\\
&\leq&C2^{u}\int\sum_{j'\geq j_{u,s}}2^{-j'(r+t)}2^{-jn'/2}2^{j'n/2}2^{jn/2}|\Phi^{\varepsilon'}(2^{j'}x-k')||\Phi^{0}(2^{j}x-k)|dx\\
&\leq&C2^{u}2^{-jn/2}2^{-j_{u,s}(r+t)}.
\end{eqnarray*}
\end{proof}
\begin{theorem}\label{th11}
Suppose that $\tau>{1}/{p'}$, $t\geq 0,r>0$ and
$1<p<{n}/{(r+t)}$. If $f\in M^{t,\tau}_{r,p}(\mathbb{R}^{n})$ and
$g_{u}$ is a $(t+r,2^{u}, E_{u})-$combination atom, then for
$s\geq1$, $(i,m')\in H_{u,s}$ and $Q=Q_{u,s,i,m'}$, we have
$\|fg_{u}\|_{H^{t,p}(Q)}\leq C(1+j_{u,s})^{-\tau}2^{u}|Q|^{1/p}.$
\end{theorem}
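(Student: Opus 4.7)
The plan is to reduce the estimate to a duality inequality against test functions localized to $Q=Q_{u,s,i,m'}$. By Lemma \ref{le2} and the duality $(H^{t,p})'=H^{-t,p'}$, it suffices to prove $|\langle fg_{u},h\rangle|\le C(1+j_{u,s})^{-\tau}2^{u}|Q|^{1/p}$ for every $h\in H^{-t,p'}$ with $\|h\|_{H^{-t,p'}}\le 1$ whose wavelet coefficients are supported on dyadic sub-cubes of $Q$. I then expand $fg_{u}=\sum_{i=1}^{5}T_{i}$ using the Dobynski-type five-term decomposition (\ref{for1})--(\ref{for2}) from Theorem \ref{th3}, with $g_{u}$ playing the role of the $H^{t+r,p}$-factor and $h$ the role of the $H^{-t,p'}$-factor. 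Pairing each $T_{i}$ with $h$ produces a trilinear sum in the wavelet coefficients of $f$, $g_{u}$ and $h$, which I will bound term by term.

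The three key ingredients are: (a) Lemma \ref{le10}, which gives the pointwise decay $|f^{\varepsilon}_{j,k}|\le C2^{j(r-n/2)}(1+j)^{-\tau}$ at every scale; (b) Lemma \ref{le13}, which provides the sharp control $|g^{0}_{j,k}|\le C2^{u-nj/2}2^{-(t+r)j_{u,s}}$ on the scaling coefficients of $g_{u}$ for $Q_{j,k}\subset Q$; and (c) the logarithmic wavelet characterization of $f$ (Theorem \ref{th8}), which on $Q$ supplies the factor $(1+j_{u,s})^{-p\tau}|Q|^{1-p(r+t)/n}$. Combined with the almost-diagonal kernel estimates of Lemma \ref{le9} and the Calder\'on--Zygmund continuity of Lemma \ref{le7}, these reduce each $\langle T_{i},h\rangle$ to a classical wavelet sum. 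The pure wavelet--wavelet term $T_{2}$ and the diagonal term $T_{4}$ can be handled exactly as in Theorem \ref{th3}, appealing to Lemma \ref{le11} to transfer derivatives onto $f$ when $t>0$; the log gain for these pieces arises directly from (c) and the $S^{t}_{r,p}$-characterization.

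The delicate pieces are the paraproduct terms $T_{1}$, $T_{3}$ and the low-frequency remainder $T_{5}$, each of which pairs a scaling coefficient of one factor with wavelet coefficients of another. For these I would split the scale sum at the threshold $j=j_{u,s}$: in the regime $j<j_{u,s}$ one combines (a) and (b), while for $j\ge j_{u,s}$ one invokes the localized logarithmic Morrey bound (c). In both regimes, the constraint that the wavelet coefficients of $h$ are supported on sub-cubes of $Q$, together with the off-diagonal decay from Lemma \ref{le9}, localizes the sums to a bounded neighborhood of $Q$, and the construction of $H_{u,s}$ rules out the low-frequency contributions of $g_{u}$ that are not captured by Lemma \ref{le13}.

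I expect the main obstacle to be extracting the single factor $(1+j_{u,s})^{-\tau}$ cleanly and simultaneously from both regimes of these paraproduct terms. The hypothesis $\tau>1/p'$ is designed precisely to close this step: after applying H\"older's inequality to convert the $p$-th power square-function Morrey bound into a pointwise log-weighted estimate one loses a factor $(1+j_{u,s})^{1/p'}$, so the residual weight $(1+j_{u,s})^{1/p'-\tau}$ is summable against the geometric series across scales exactly when $\tau>1/p'$. Combining the estimates for the five pieces then yields the asserted bound $C(1+j_{u,s})^{-\tau}2^{u}|Q|^{1/p}$.
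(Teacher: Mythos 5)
Your overall architecture (paraproduct decomposition of $fg_{u}$, Lemmas \ref{le10} and \ref{le13}, the localized Morrey--log bound from Theorem \ref{th8}) matches the paper's, which splits $fg_{u}$ into six terms and estimates the square function $S_{t}(T_{i})$ on $Q$ directly, reserving duality for the diagonal term $T_{6}$ alone. However, there is a genuine gap in your key quantitative step. You propose to extract the factor $(1+j_{u,s})^{-\tau}$ by ``losing $(1+j_{u,s})^{1/p'}$'' from the Morrey bound and then summing the residual weight $(1+j_{u,s})^{1/p'-\tau}$ across scales using $\tau>1/p'$. That mechanism cannot produce the asserted conclusion: summability of $(1+j)^{1/p'-\tau}$ yields at best a bounded constant, not the decaying prefactor $(1+j_{u,s})^{-\tau}$ that the statement requires --- and that decay is indispensable, because it is precisely what is later summed over the atom levels $u$ (via H\"older's inequality, which is where $\tau>1/p'$ is actually consumed) in Lemma \ref{le20} and Theorem \ref{th5}. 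In the paper no such loss occurs: Lemma \ref{le10} already gives the pointwise coefficient bound $|f^{\varepsilon}_{j,k}|\le C2^{j(r-n/2)}(1+j)^{-\tau}$ with the full logarithmic gain, and since every scale occurring in the decomposition on $Q$ satisfies $j\ge j_{u,s}$, one simply bounds $(1+j)^{-\tau}\le(1+j_{u,s})^{-\tau}$ and pulls the factor out; likewise the square-function Morrey bound on $Q$ delivers $(1-\log_{2}|Q|)^{-\tau}|Q|^{1/p-(r+t)/n}$ with no $1/p'$ loss. Within Theorem \ref{th11} itself the hypothesis $\tau>1/p'$ plays no role; the argument uses only $\tau>0$.

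A second problem: for the diagonal term you appeal to ``the $S^{t}_{r,p}$-characterization,'' but membership of $f$ in $S^{t}_{r,p}(\mathbb{R}^{n})$ is, by Theorem \ref{th3}, equivalent to the multiplier property that this whole chain of results is trying to establish, so invoking it here is circular. The paper avoids this by exploiting the extra structure of the combination atom: the diagonal contribution $T_{6}$ is paired with $h\in H^{-t,p'}(Q)$ and estimated by Cauchy--Schwarz as $S_{t+r}(g_{u})(x)\bigl(\sum 2^{(n-2t-2r)j}|f^{\varepsilon}_{j,k}|^{2}\chi(2^{j}x-k)\bigr)^{1/2}$, then bounded using $S_{t+r}(g_{u})\le C2^{u}$ together with the shifted Morrey--log estimate for $f$ on $Q$, which is what produces $C2^{u}|Q|^{1/p}(1+j_{u,s})^{-\tau}$. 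You would need to replace your circular step with an argument of this kind, and to rework the extraction of the logarithmic factor as described above, before the proof closes.
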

\begin{proof}

First, for $t\geq 0$, we prove $\|fg_{u}\|_{L^{p}(Q)}\leq
Cj_{u,s}^{-\tau}2^{u}|Q|^{1/p}.$ Let
$f(x)=\sum\limits_{(\varepsilon,j,k)\in\Lambda_{n}}f^{\varepsilon}_{j,k}\Phi^{\varepsilon}_{j,k}(x)$
and
$g_{u}(x)=\sum\limits_{(\varepsilon,j,k)\in\Lambda_{n}}g^{\varepsilon}_{j,k}\Phi^{\varepsilon}_{j,k}(x)$.
Denote by $\Lambda_{n}'$ the set
$$\left\{(\varepsilon,\varepsilon', j,k,l),\ \varepsilon, \varepsilon'\in E_{n},
(\varepsilon,k)\neq(\varepsilon',k+l), j\in\mathbb{Z},
k,l\in\mathbb{Z}^{n}, |l|\leq\sqrt{n}2^{(M+2)n}\right\}.$$ For any
dyadic cube $Q\subset E_{u}$, by the formulas (\ref{for1}) and
(\ref{for2}), we decompose the product $f(x)g_{u}(x)$ into the
following parts.
\begin{eqnarray*}
T_{1}(x)&=&\sum_{(\varepsilon,j,k)\in\Lambda_{n},
j=j_{u,s}}~\sum_{|l|\leq\sqrt{n}2^{(M+2)n}}f^{0}_{j,k+l}g^{0}_{j,k}\Phi^{0}_{j,k+l}(x)\Phi^{0}_{j,k}(x);\\
T_{2}(x)&=&\sum_{(\varepsilon,j,k)\in\Lambda_{n},
j\geq j_{u,s}}~\sum_{|l|\leq\sqrt{n}2^{(M+2)n}}f^{0}_{j,k+l}g^{\varepsilon}_{j,k}\Phi^{0}_{j,k+l}(x)\Phi^{\varepsilon}_{j,k}(x);\\
T_{3}(x)&=&\sum_{(\varepsilon,j,k)\in\Lambda_{n},
j\geq j_{u,s}}~\sum_{|l|\leq\sqrt{n}2^{(M+2)n}}f^{\varepsilon}_{j,k+l}g^{0}_{j,k}\Phi^{\varepsilon}_{j,k+l}(x)\Phi^{0}_{j,k}(x);\\
T_{4}(x)&=&\sum_{(\varepsilon,\varepsilon',j,k, l)\in\Lambda'_{n},
j\geq j_{u,s}}~\sum_{|l|\leq\sqrt{n}2^{(M+2)n}}f^{\varepsilon'}_{j,k+l}g^{\varepsilon}_{j,k}\Phi^{\varepsilon'}_{j,k+l}(x)\Phi^{\varepsilon}_{j,k}(x);\\
T_{5}(x)&=&\sum_{(\varepsilon,j,k)\in\Lambda_{n}, j\geq
j_{u,s}}f^{\varepsilon}_{j,k}g^{\varepsilon}_{j,k}\left((\Phi^{\varepsilon}_{j,k}(x))^{2}-2^{{nj}/{2}}\Phi^{0}_{j,k}(x)\right);\\
T_{6}(x)&=&\sum_{(\varepsilon,j,k)\in\Lambda_{n}, j\geq
j_{u,s}}f^{\varepsilon}_{j,k}g^{\varepsilon}_{j,k}2^{{nj}/{2}}\Phi^{0}_{j,k}(x).
\end{eqnarray*}
We divide the rest of the proof into three steps.

 {\bf Step 1.} For
$t\geq 0$, we estimate the norm $\|\cdot\|_{L^{p}(Q)}$ for the terms
$T_{i}(x), i=1,2,\cdots, 6.$ By the wavelet characterization of
Sobolev spaces, we obtain
$$f=\sum_{(\varepsilon,j,k)\in\Lambda_{n}}f^{\varepsilon}_{j,k}\Phi^{\varepsilon}_{j,k}(x)\in L^{p}\text{ if and only if }
\left\|\left(\sum_{(\varepsilon,j,k)\in\Lambda_{n}}2^{jn}|f^{\varepsilon}_{j,k}|^{2}\chi(2^{j}x-k)\right)^{1/2}\right\|_{L^{p}}.$$
Denote by $S_{0}(f)(x)$  the operator
$\left(\sum\limits_{(\varepsilon,j,k)\in\Lambda_{n}}2^{jn}|f^{\varepsilon}_{j,k}|^{2}\chi(2^{j}x-k)\right)^{1/2}$.
We have $\|f\|_{L^{p}}\simeq \|S_{0}(f)\|_{L^{p}}.$

(1) Because $f\in M^{t,\tau}_{r,p}$, by Lemma \ref{le10},
$|f_{j_{u,s},k}^{0}|\leq(1+j_{u,s})^{-\tau}2^{(r-{n}/{2})j_{u,s}}$.
By Lemma \ref{le13}, we have $|g^{0}_{j_{u,s},k}|\leq
C2^{u-(r+t+{n}/{2})j_{u,s}}$. Hence we can get
\begin{eqnarray*}
&&S_{0}(T_{1})(x)\\
&=&\left(\sum_{|Q_{j,k}|=|Q_{j_{u,s},k}|}2^{2j_{u,s}n}|f^{0}_{j_{u,s},k}|^{2}|g^{0}_{j_{u,s},k}|^{2}\chi(2^{j_{u,s}}x-k)\right)^{1/2}\\
&\leq&\left(\sum_{|Q_{j,k}|=|Q_{j_{u,s},k}|}2^{2j_{u,s}n}(1+j_{u,s})^{-2\tau}
2^{2(r-{n}/{2})j_{u,s}}2^{2u-2(r+t+{n}/{2})j_{u,s}}2^{2j_{u,s}({n}/{2}+t)}\chi(2^{j_{u,s}}x-k)\right)^{1/2}.
\end{eqnarray*}
Because $j_{u,s}\geq0$, we have
$$S_{0}(T_{1})(x)\leq(1+j_{u,s})^{-\tau
}2^{u}\left(\sum_{|Q_{j,k}|=|Q_{j_{u,s},k}|}\chi(2^{j}x-k)\right)^{1/2}.$$
and
$$\|T_{1}\|_{L^{p}(Q)}=\|S_{0}(T_{1})\|_{L^{p}(Q)}\leq C(1+j_{u,s})^{-\tau}2^{u}|Q|^{1/p}.$$

(2) Now we estimate
$T_{2}(x)=\sum\limits_{(\varepsilon,j,k)\in\Lambda_{n}, j\geq
j_{u,s}}~\sum\limits_{|l|\leq\sqrt{n}2^{(M+2)n}}f^{0}_{j,k+l}g^{\varepsilon}_{j,k}\Phi^{0}_{j,k+l}(x)\Phi^{\varepsilon}_{j,k}(x)$.
Because $j\geq j_{u,s}\geq0$, we have
$$|f_{j,k}^{0}|\leq(1+j_{u,s})^{-\tau}2^{(r-{n}/{2})j}\leq (1+j_{u,s})^{-\tau}2^{(r+t-{n}/{2})j}.$$
Let
$$\Lambda_{n,2}^{Q}=\left\{(\varepsilon,j,k)\in\Lambda_{n}\mid j\geq j_{u,s}, \forall l\leq\sqrt{n}2^{(M+2)n},
|{\rm supp}(\Phi^{0}_{j,k+l}(x)\Phi^{\varepsilon}_{j,k}(x))\cap Q|\neq0\right\}.$$
Then we can get
\begin{eqnarray*}
S_{0}(T_{2})(x)&=&\left(\sum_{(\varepsilon,j,k)\in\Lambda_{n,2}^{Q}}|f^{0}_{j,k+l}|^{2}|g^{\varepsilon}_{j,k}|^{2}\chi(2^{j}x-k)\right)^{1/2}\\
&\leq&C\left(\sum_{(\varepsilon,j,k)\in\Lambda_{n,2}^{Q}}2^{2jn}(1+j)^{-2\tau}2^{2j(r+t-{n}/{2})}|g^{\varepsilon}_{j,k}|^{2}\chi(2^{j}x-k)\right)^{1/2}\\
&\leq&C(1+j_{u,s})^{-\tau}S_{t+r}(g_{u})(x).
\end{eqnarray*}
Because $g_{u}(x)$ is a $(t+r, 2^{u}, E_{u})-$combination atom,
\begin{eqnarray*}
\|S_{0}(T_{2})\|_{L^{p}(Q)}&\leq&C(1+j_{u,s})^{-\tau}\|S_{t+r}(g_{u})\|_{L^{p}(Q)}
\leq C(1+j_{u,s})^{-\tau}2^{u}|Q|^{1/p}.
\end{eqnarray*}

(3) Since $g_{u}(x)$ is a $(t+r, 2^{u}, E_{u})-$combination atom, for
$s\geq1$, $(i, m')\in H_{u,s}$, $j\geq0$ and $Q=Q_{u,s,i,m'}$,
$$|g^{0}_{j,k}|\leq C2^{u-{nj}/{2}}2^{-(r+t)j}=2^{u-{nj}/{2}}|Q|^{{(r+t)}/{n}}.$$
Let
$$\Lambda_{n,3}^{Q}=\left\{(\varepsilon,j,k)\in\Lambda_{n}\mid j\geq j_{u,s}, \forall l\leq\sqrt{n}2^{(M+2)n},
|{\rm supp}(\Phi^{0}_{j,k}(x)\Phi^{\varepsilon}_{j,k+l}(x))\cap Q|\neq0\right\}.$$
We have, by $j_{u,s}\geq0$,
\begin{eqnarray*}
S_{0}(T_{3})(x)&=&\left(\sum_{(\varepsilon,j,k)\in\Lambda_{n,3}^{Q}}
2^{jn}|g^{0}_{j,k}|^{2}|f^{\varepsilon}_{j,k}|^{2}\chi(2^{j}x-k)\right)^{1/2}\\
&\leq&C2^{u}|Q|^{{(r+t)}/{n}}\left(\sum_{(\varepsilon,j,k)\in\Lambda_{n,3}^{Q}}
2^{j(n+2t)}|f^{\varepsilon}_{j,k}|^{2}\chi(2^{j}x-k)\right)^{1/2}.
\end{eqnarray*}
By the fact that $j\geq j_{u,s}$ and $f\in M^{t,\tau}_{r,p}(\mathbb{R}^{n})$, we get
\begin{eqnarray*}
\|S_{0}(T_{3})\|_{L^{p}(Q)}\leq
C2^{u}|Q|^{{(r+t)}/{n}}\left(-\log_{2}|Q|\right)^{-\tau}|Q|^{{1}/{p}-{(r+t)}/{n}}\leq
C2^{u}j_{u,s}^{-\tau}|Q|^{1/p}.
\end{eqnarray*}

(4) Now we estimate the term $T_{4}(x)$. Let
$$\Lambda_{n,4}^{Q}=\left\{(\varepsilon,j,k)\in\Lambda_{n}\mid j\geq j_{u,s}, \forall (\varepsilon,\varepsilon',j,k,l)\in\Lambda'_{n},
|{\rm
supp}(\Phi^{\varepsilon'}_{j,k+l}(x)\Phi^{\varepsilon}_{j,k}(x))\cap
Q|\neq0\right\}.$$ Because $f\in M^{t,\tau}_{r,p}(\mathbb{R}^{n})$,
we have
$$|f^{\varepsilon}_{j,k}|\leq C(1+j)^{-\tau}2^{j(r-{n}/{2})}\leq C(1+j)^{-\tau}2^{j(r+t-{n}/{2})}$$ and
\begin{eqnarray*}
S_{0}(T_{4})(x)&=&\left(\sum_{(\varepsilon,j,k)\in \Lambda_{n,4}^{Q}}2^{jn}(1+j)^{-2\tau}2^{j(2r+2t-n)}2^{jn}|g^{\varepsilon}_{j,k}|^{2}\chi(2^{j}x-k)\right)^{1/2}\\
&\leq&C(1+j_{u,s})^{-\tau}S_{t+r}(g_{u})(x).
\end{eqnarray*}
Then we can get, by the fact that and $g_{u}$ is a $(t+r, 2^{u},
E_{u})-$combination atom,
$$\|S_{0}(T_{4})\|_{L^{p}(Q)}\leq C(1+j_{u,s})^{-\tau}\|S_{t+r}(g_{u})\|_{L^{p}(Q)}\leq C2^{u}(1+j_{u,s})^{-\tau}|Q|^{1/p}.$$

(5) Now we estimate  the term
$T_{5}(x)=\sum\limits_{(\varepsilon,j,k)\in\Lambda_{n},j\geq
j_{u,s}}f^{\varepsilon}_{j,k}g^{\varepsilon}_{j,k}\left((\Phi^{\varepsilon}_{j,k}(x))^{2}-2^{{nj}/{2}}\Phi^{0}_{j,k}(x)\right).$
Because the function
$(\Phi^{\varepsilon}_{j,k}(x))^{2}-2^{{nj}/{2}}\Phi^{0}_{j,k}(x)$
plays the role as that of $\Phi^{\varepsilon}_{j,k}(x)$, we have
$\|S_{0}(T_{5})\|_{L^{p}(Q)}\leq
C2^{u}(1+j_{u,s})^{-\tau}|Q|^{1/p}$.

(6) To estimate the term $T_{6}(x)$, we take $h\in L^{
p'}(Q)$.
Let
$$\Lambda_{n,6}^{Q}=\left\{(\varepsilon,j,k)\in\Lambda_{n}\mid j\geq j_{u,s},
|{\rm supp}(\Phi^{0}_{j,k+l}(x)\Phi^{\varepsilon}_{j,k}(x))\cap Q|\neq0\right\}.$$
By the orthogonality of the wavelet function, we have
\begin{eqnarray*}
<T_{6}, h>&=&\left<\sum_{(\varepsilon,
j,k)\Lambda^{Q}_{n,6}}f^{\varepsilon}_{j,k}g^{\varepsilon}_{j,k}2^{{jn}/{2}}\Phi^{0}_{j,k},
h\right>
=\sum_{(\varepsilon,j,k)\in\Lambda_{n,6}^{Q}}f^{\varepsilon}_{j,k}g^{\varepsilon}_{j,k}2^{{nj}/{2}}h^{0}_{j,k}.
\end{eqnarray*}
Then
\begin{eqnarray*}
|<T_{6}, h>|&\leq&\int\sum_{(\varepsilon,j,k)\in\Lambda_{n,6}^{Q}}2^{nj}|f^{\varepsilon}_{j,k}||g^{\varepsilon}_{j,k}|2^{nj/2}
|h^{0}_{j,k}|\chi(2^{j}x-k)dx\\
&\leq&\int\sum_{(\varepsilon,j,k)\in\Lambda_{n,6}^{Q}}2^{nj}|f^{\varepsilon}_{j,k}||g^{\varepsilon}_{j,k}|\chi(2^{j}x-k)M(h)(x)dx.
\end{eqnarray*}
By H\"older's inequality and $j\geq j_{u,s}\geq0$, it can be deduced
that
\begin{eqnarray*}
&&\sum_{(\varepsilon,j,k)\in\Lambda_{n,6}^{Q}}2^{nj}|f^{\varepsilon}_{j,k}||g^{\varepsilon}_{j,k}|\chi(2^{j}x-k)\\
&\leq&\left(\sum_{(\varepsilon,j,k)\in\Lambda_{n,6}^{Q}}2^{(n-2t-2r)j}|f^{\varepsilon}_{j,k}|^{2}\chi(2^{j}x-k)\right)^{1/2}
\left(\sum_{(\varepsilon,j,k)\in\Lambda_{n,6}^{Q}}2^{(n+2t+2r)j}|g^{\varepsilon}_{j,k}|^{2}\chi(2^{j}x-k)\right)^{1/2}\\
&\leq&C2^{u}|Q|^{{(r+t)}/{n}}
\left(\sum_{(\varepsilon,j,k)\in\Lambda_{n,6}^{Q}}2^{(n-2t-2r)j}|f^{\varepsilon}_{j,k}|^{2}\chi(2^{j}x-k)\right)^{1/2}.
\end{eqnarray*}
Finally we can get
\begin{eqnarray*}
|<T_{6}, h>|&\leq&C2^{u}|Q|^{{(r+t)}/{n}}
\left\|\left(\sum_{(\varepsilon,j,k)\in\Lambda_{n,6}^{Q}}
2^{(n-2t-2r)j}|f^{\varepsilon}_{j,k}|^{2}\chi(2^{j}x-k)\right)^{1/2}\right\|_{L^{p}(Q)}\|h\|_{L^{p'}(Q)}\\
&\leq&C2^{u}|Q|^{{(r+t)}/{n}}|1-\log_{2}|Q||^{-\tau}|Q|^{{1}/{p}-{(r+t)}/{n}}\\
&\leq&C2^{u}|Q|^{1/p}(1+j_{u,s})^{-\tau}.
\end{eqnarray*}
Because $h$ takes over all functions in $L^{p'}(Q)$, it is obvious
that $\|S_{0}(T_{6})\|_{L^{p}(Q)}\leq
C(1+j_{u,s})^{-\tau}2^{u}|Q|^{1/p}$.

{\bf Step 2.} Assume that $0\leq t<1$. We need to prove for
$g(x)=\sum_{(\varepsilon,j,k)\in\Lambda_{n}}g^{\varepsilon}_{j,k}\Phi^{\varepsilon}_{j,k}(x)$
and
$S_{t}(g)(x)=\left(\sum\limits_{(\varepsilon,j,k)\in\Lambda_{n}}2^{j(2t+n)}|g^{\varepsilon}_{j,k}|^{2}\chi(2^{j}x-k)\right)^{1/2},$
 $\|S_{t}(fg_{u})\|_{L^{p}(Q)}\leq
C2^{u}(1+j_{u,s})^{-\tau}|Q|^{1/p}$. The index sets $\Lambda_{n,i}^{Q}, i=1,2,\cdots, 6$ are the same as  Step I.

(1) For the term $T_{1}$, we have
\begin{eqnarray*}
&&S_{t}(T_{1})(x)
=\left(\sum_{|Q_{j,k}|=|Q_{j_{u,s},k}|}
2^{j_{u,s}(n+2t)}2^{j_{u,s}n}|f^{0}_{j_{u,s},k}|^{2}|g^{0}_{j_{u,s},k}|^{2}\chi(2^{j_{u,s}}x-k)\right)^{1/2}\\
&\leq&\left(\sum_{|Q_{j,k}|=|Q_{j_{u,s},k}|}
2^{j_{u,s}(n+2t)}2^{j_{u,s}n}(1+j_{u,s})^{-2\tau}
2^{2(r-{n}/{2})j_{u,s}}2^{2u-2(r+t+{n}/{2})j_{u,s}}2^{2j_{u,s}({n}/{2}+t)}\chi(2^{j_{u,s}}x-k)\right)^{1/2}\\
&\leq&(1+j_{u,s})^{-\tau
}2^{u}\left(\sum_{|Q_{j,k}|=|Q_{j_{u,s},k}|}\chi(2^{j}x-k)\right)^{1/2}.
\end{eqnarray*}
Then we have
$\|T_{1}\|_{H^{t,p}(Q)}=\|S_{t}(T_{1})\|_{L^{p}(Q)}\leq C(1+j_{u,s})^{-\tau}2^{u}|Q|^{1/p}.$

(2) For the term
 $$T_{2}(x)=\sum\limits_{(\varepsilon,j,k)\in\Lambda_{n}, j\geq
j_{u,s}}~\sum\limits_{|l|\leq\sqrt{n}2^{(M+2)n}}f^{0}_{j,k+l}g^{\varepsilon}_{j,k}\Phi^{0}_{j,k+l}(x)\Phi^{\varepsilon}_{j,k}(x),$$
we have
\begin{eqnarray*}
S_{t}(T_{2})(x)&=&\left(\sum_{(\varepsilon,j,k)\in\Lambda_{n,2}^{Q}}2^{j(n+2t)}2^{jn}|f^{0}_{j,k+l}|^{2}|g^{\varepsilon}_{j,k}|^{2}\chi(2^{j}x-k)\right)^{1/2}\\
&\leq&C\left(\sum_{(\varepsilon,j,k)\in\Lambda_{n,2}^{Q}}2^{j(n+2t)}2^{jn}(1+j)^{-2\tau}2^{2j(r-{n}/{2})}|g^{\varepsilon}_{j,k}|^{2}\chi(2^{j}x-k)\right)^{1/2}\\
&\leq&C(1+j_{u,s})^{-\tau}S_{t+r}(g_{u})(x).
\end{eqnarray*}
Because $g_{u}(x)$ is a $(t+r, 2^{u}, E_{u})-$combination atom, we
have
\begin{eqnarray*}
&&\|S_{t}(T_{2})\|_{L^{p}(Q)}\leq
C(1+j_{u,s})^{-\tau}\|S_{t+r}(g_{u})\|_{L^{p}(Q)} \leq
C(1+j_{u,s})^{-\tau}2^{u}|Q|^{1/p}.
\end{eqnarray*}

(3) Because $g_{u}(x)$ is a $(t+r, 2^{u}, E_{u})-$combination atom,
for $s\geq1$, $(i, m')\in H_{u,s}$ and $Q=Q_{u,s,i,m'}$, we can get
$|g^{0}_{j,k}|\leq
C2^{u-{nj}/{2}}2^{-(r+t)j}=2^{u-{nj}/{2}}|Q|^{{(r+t)}/{n}}$
and
\begin{eqnarray*}
S_{t}(T_{3})(x)&=&\left(\sum_{(\varepsilon,j,k)\in\Lambda_{n,3}^{Q}}
2^{j(n+2t)}|g^{0}_{j,k}|^{2}|f^{\varepsilon}_{j,k}|^{2}\chi(2^{j}x-k)\right)^{1/2}\\
&\leq&C2^{u}\left(\sum_{(\varepsilon,j,k)\in\Lambda_{n,3}^{Q}}
2^{j(n+2t)}2^{-nj}2^{-(r+t)j}|f^{\varepsilon}_{j,k}|^{2}\chi(2^{j}x-k)\right)^{1/2}\\
&\leq&C2^{u}|Q|^{{(r+t)}/{n}}\left(\sum_{(\varepsilon,j,k)\in\Lambda_{n,3}^{Q}}
2^{j(n+2t)}|f^{\varepsilon}_{j,k}|^{2}\chi(2^{j}x-k)\right)^{1/2}.
\end{eqnarray*}
From the fact that $j\geq j_{u,s}$ and $f\in M^{t,\tau}_{r,p}(\mathbb{R}^{n})$, we
deduce that
\begin{eqnarray*}
\|S_{t}(T_{3})\|_{L^{p}(Q)}\leq
C2^{u}|Q|^{{(r+t)}/{n}}\left(-\log_{2}|Q|\right)^{-\tau}|Q|^{{1}/{p}-{(r+t)}/{n}}\leq
C2^{u}j_{u,s}^{-\tau}|Q|^{1/p}.
\end{eqnarray*}

(4) For the term $T_{4}(x)$, because $f\in M^{t,\tau}_{r,p}(\mathbb{R}^{n})$ and
$g_{u}$ is a $(t+r, 2^{u}, E_{u})-$combination atom, we have
$|f^{\varepsilon}_{j,k}|\leq C(1+j)^{-\tau}2^{j(r-{n}/{2})}$ and
\begin{eqnarray*}
S_{t}(T_{4})(x)&=&\left(\sum_{(\varepsilon,j,k)\in\Lambda_{n,4}^{Q}}
2^{j(n+2t)}(1+j)^{-2\tau}2^{j(2r-n)}2^{jn}|g^{\varepsilon}_{j,k}|^{2}\chi(2^{j}x-k)\right)^{1/2}
\leq C(1+j_{u,s})^{-\tau}S_{t+r}(g_{u})(x).
\end{eqnarray*}
Then we can get
$$\|S_{t}(T_{4})\|_{L^{p}(Q)}\leq C(1+j_{u,s})^{-\tau}\|S_{t+r}(g_{u})\|_{L^{p}(Q)}\leq C2^{u}(1+j_{u,s})^{-\tau}|Q|^{1/p}.$$

(5) Now we estimate  the term
$$T_{5}(x)=\sum_{(\varepsilon,j,k)\in\Lambda_{n},j\geq
j_{u,s}}f^{\varepsilon}_{j,k}g^{\varepsilon}_{j,k}\left((\Phi^{\varepsilon}_{j,k}(x))^{2}-2^{{nj}/{2}}\Phi^{0}_{j,k}(x)\right).$$
Because the function
$(\Phi^{\varepsilon}_{j,k}(x))^{2}-2^{{nj}/{2}}\Phi^{0}_{j,k}(x)$
plays the role as that of $\Phi^{\varepsilon}_{j,k}(x)$, we have
$\|S_{t}(T_{5})\|_{L^{p}(Q)}\leq
C2^{u}(1+j_{u,s})^{-\tau}|Q|^{1/p}$.

(6) For the term $T_{6}(x)$, we take $h\in H^{-t,
p'}(Q)$. By the orthogonality of the wavelet
functions, we have
\begin{eqnarray*}
\left<T_{6}, h\right>&=&\left<\sum_{(\varepsilon,
j,k)\in\Lambda^{Q}_{n,6}}f^{\varepsilon}_{j,k}g^{\varepsilon}_{j,k}2^{{jn}/{2}}\Phi^{0}_{j,k},
h\right>
=\sum_{(\varepsilon,j,k)\in\Lambda_{n,6}^{Q}}f^{\varepsilon}_{j,k}g^{\varepsilon}_{j,k}2^{{nj}/{2}}h^{0}_{j,k}.
\end{eqnarray*}
We can get
\begin{eqnarray*}
\left|\left<T_{6},\
h\right>\right|&\leq&\left|\int\sum_{(\varepsilon,
j,k)\in\Lambda^{Q}_{n}}2^{{nj}/{2}}g^{\varepsilon}_{j,k}f^{\varepsilon}_{j,k}2^{jn}h^{0}_{j,k}\chi(2^{j}x-k)dx\right|\\
&\leq&\int\sum_{(\varepsilon,
j,k)\in\Lambda^{Q}_{n}}2^{nj}|g^{\varepsilon}_{j,k}||f^{\varepsilon}_{j,k}|M(h)(x)\chi(2^{j}x-k)dx.
\end{eqnarray*}
By H\"older's inequality, we have
\begin{eqnarray*}
&&\sum_{(\varepsilon,j,k)\in\Lambda_{n,6}^{Q}}2^{nj}|f^{\varepsilon}_{j,k}||g^{\varepsilon}_{j,k}|\chi(2^{j}x-k)\\
&\leq&\left(\sum_{(\varepsilon,j,k)\in\Lambda_{n,6}^{Q}}2^{(n-2t-2r)j}|f^{\varepsilon}_{j,k}|^{2}\chi(2^{j}x-k)\right)^{1/2}
\left(\sum_{(\varepsilon,j,k)\in\Lambda_{n,6}^{Q}}2^{(n+2t+2r)j}|g^{\varepsilon}_{j,k}|^{2}\chi(2^{j}x-k)\right)^{1/2}\\
&\leq&S_{t+r}(g_{u})(x)
\left(\sum_{(\varepsilon,j,k)\in\Lambda_{n,6}^{Q}}2^{(n-2t-2r)j}|f^{\varepsilon}_{j,k}|^{2}\chi(2^{j}x-k)\right)^{1/2}.
\end{eqnarray*}
Because $j\geq j_{u,s}$ implies that $2^{-j(r+t)}\leq 2^{-j_{u,s}(r+t)}$, we can get
\begin{eqnarray*}
|<T_{6}, h>|&\leq&C2^{u}
\left\|\left(\sum_{(\varepsilon,j,k)\in\Lambda_{n,6}^{Q}}
2^{(n-2t-2r)j}|f^{\varepsilon}_{j,k}|^{2}\chi(2^{j}x-k)\right)^{1/2}\right\|_{L^{p}(Q)}\|h\|_{L^{p'}(Q)}\\
&\leq&C2^{u}|Q|^{{(r+t)}/{n}}|1-\log_{2}|Q||^{-\tau}|Q|^{{1}/{p}-{(r+t)}/{n}}\\
&\leq&C2^{u}|Q|^{1/p}(1+j_{u,s})^{-\tau}.
\end{eqnarray*}
Because $h$ takes over all functions in $H^{-t,p'}(Q)$, we can get
$\|S_{t}(T_{6})\|_{L^{p}(Q)}\leq
C(1+j_{u,s})^{-\tau}2^{u}|Q|^{1/p}$. This completes the proof for
the case $0\leq t<1$.

{\bf Step 3.} Now we consider the case $t\geq1$.
In this case, there exists an integer $[t]$ such that $[t]\leq
t<[t]+1$. For any $\alpha=(\alpha_{1}, \alpha_{2}, \cdots,
\alpha_{n})\in \mathbb{N}^{n}$ with $|\alpha|=\sum\limits_{i=1}^{n}\alpha_{i}=[t]$,
the derivative $\frac{\partial^{\alpha}}{\partial x^{\alpha}}$ of
the product $fg_{u}$ can be represented as
$$\frac{\partial^{\alpha}}{\partial x^{\alpha}}(fg_{u})
=\sum_{\gamma,\beta}C_{\alpha,\beta,\gamma}(\frac{\partial^{\beta}}{\partial
x^{\beta}})f(x)\frac{\partial^{\gamma}}{\partial
x^{\gamma}}g_{u}(x),$$ where $|\alpha|=|\beta|+|\gamma|$.
 Denote $\frac{\partial^{\beta}}{\partial
x^{\beta}}f(x)$ by $f_{\beta}(x)$ and denote
$\frac{\partial^{\gamma}}{\partial x^{\gamma}}g_{u}(x)$ by
$g_{u,\gamma}(x)$. Applying the conclusion in Step 1, we only need
to prove
$$\|f_{\beta}g_{u,\gamma}\|_{H^{t-[t],p}}\leq C\tau_{Q}^{-\tau}2^{u}|Q|^{1/p},$$
that is,
$\|S_{t-[t]}(f_{\beta}g_{u,\gamma})\|_{L^{p}}\leq C2^{u}\tau_{Q}^{-\tau}|Q|^{1/p}.$

If $g_{u}$ is a $(t+r,2^{u}, E_{u})-$combination atom, then
\begin{eqnarray*}
g_{u,\gamma}(x)&=&\sum_{Q_{j,k}\in\mathcal{F}_{v,l}}g^{\varepsilon}_{j,k}(\frac{\partial}{\partial
x})^{\gamma}[2^{nj/2}\Phi^{\varepsilon}_{j,k}(x)]
=\sum_{Q_{j,k}\in\mathcal{F}_{v,l}}g^{\varepsilon}_{j,k}2^{nj/2}2^{j|\gamma|}\left(\frac{\partial^{\gamma}}{\partial
x^{\gamma}}\Phi^{\varepsilon}\right)(2^{j}x-k).
\end{eqnarray*}
Hence
\begin{eqnarray*}
S_{t-|\gamma|}(g_{u,\gamma})(x)&=&\left(\sum_{(\varepsilon,j,k)\in\Lambda_{n}}2^{j(n+2t-2|\gamma|)}
|g^{\varepsilon}_{j,k}|^{2}2^{j|\gamma|}\chi(2^{j}x-k)\right)^{1/2}\\
&=&\left(\sum_{(\varepsilon,j,k)\in\Lambda_{n}}2^{j(n+2t)}|g^{\varepsilon}_{j,k}|^{2}\chi(2^{j}x-k)\right)^{1/2}\\
&=&S_{t}(g_{u})(x).
\end{eqnarray*}
On the other hand, if $f\in M^{t,\tau}_{r,p}(\mathbb{R}^{n})$, then
$$\int_{Q}\left(\sum_{\varepsilon\in E_{n}, Q_{j,k}\subset Q}2^{j(n+2t)}|f^{\varepsilon}_{j,k}|^{2}\chi(2^{j}x-k)\right)^{p/2}dx
\leq C(1-\log_{2} |Q|)^{-p\tau}|Q|^{1-{p(r+t)}/{n}}$$ and
$(f_{\beta})^{\varepsilon}_{j,k}=2^{j|\beta|}f^{\varepsilon}_{j,k}$.
We have
\begin{eqnarray*}
&&\int_{Q}\left(\sum_{\varepsilon\in E_{n}, Q_{j,k}\subset
Q}2^{j(n+2t-2|\beta|)}|f^{\beta,\varepsilon}_{j,k}|^{2}\chi(2^{j}x-k)\right)^{p/2}dx\\
&=&\int_{Q}\left(\sum_{\varepsilon\in E_{n}, Q_{j,k}\subset
Q}2^{j(n+2t-2|\beta|)}2^{2j|\beta|}|f^{\varepsilon}_{j,k}|^{2}\chi(2^{j}x-k)\right)^{p/2}dx\\
&\leq&C(1-\log_{2}|Q|)^{-p\tau}|Q|^{1-{p(t-|\beta|+r+|\beta|)}/{n}},
\end{eqnarray*}
that is, $f_{\beta}(x)\in
M^{t-|\beta|,\tau}_{r+|\beta|,p}(\mathbb{R}^{n})$.

For any cube $Q$, the function
$f_{\beta}(x)=\sum\limits_{(\varepsilon,j,k)\in\Lambda_{n}}f^{\beta,\varepsilon}_{j,k}\Phi^{\varepsilon}_{j,k}(x)$
and any dyadic cube $Q\subset S_{u}$, we divide the product
$f_{\beta}\cdot g_{u,\gamma}$ into the following parts.
\begin{eqnarray*}
T^{\beta,\gamma}_{1}(x)&=&\sum_{(\varepsilon,j,k)\in\Lambda_{n},
j=j_{u,s}}~\sum_{|l|\leq\sqrt{n}2^{(M+2)n}}f^{\beta, 0}_{j,k+l}g^{\gamma,0}_{j,k}\Phi^{0}_{j,k+l}(x)\Phi^{0}_{j,k}(x);\\
T^{\beta,\gamma}_{2}(x)&=&\sum_{(\varepsilon,j,k)\in\Lambda_{n},
j\geq j_{u,s}}~\sum_{|l|\leq\sqrt{n}2^{(M+2)n}}f^{\beta,0}_{j,k+l}g^{\gamma,\varepsilon}_{j,k}\Phi^{0}_{j,k+l}(x)\Phi^{\varepsilon}_{j,k}(x);\\
T^{\beta,\gamma}_{3}(x)&=&\sum_{(\varepsilon,j,k)\in\Lambda_{n},
j\geq j_{u,s}}~\sum_{|l|\leq\sqrt{n}2^{(M+2)n}}f^{\beta,\varepsilon}_{j,k+l}g^{\gamma,0}_{j,k}\Phi^{\varepsilon}_{j,k+l}(x)\Phi^{0}_{j,k}(x);\\
T^{\beta,\gamma}_{4}(x)&=&\sum_{(\varepsilon,\varepsilon',j,k,
l)\in\Lambda'_{n},
j\geq j_{u,s}}~\sum_{|l|\leq\sqrt{n}2^{(M+2)n}}f^{\beta,\varepsilon'}_{j,k+l}g^{\gamma,\varepsilon}_{j,k}\Phi^{\varepsilon'}_{j,k+l}(x)\Phi^{\varepsilon}_{j,k}(x);\\
T^{\beta,\gamma}_{5}(x)&=&\sum_{(\varepsilon,j,k)\in\Lambda_{n},
j\geq
j_{u,s}}f^{\beta,\varepsilon}_{j,k}g^{\gamma,\varepsilon}_{j,k}\left((\Phi^{\varepsilon}_{j,k}(x))^{2}-2^{{nj}/{2}}
\Phi^{0}_{j,k}(x)\right);\\
T^{\beta,\gamma}_{6}(x)&=&\sum_{(\varepsilon,j,k)\in\Lambda_{n},
j\geq
j_{u,s}}f^{\beta,\varepsilon}_{j,k}g^{\gamma,\varepsilon}_{j,k}2^{{nj}/{2}}\Phi^{0}_{j,k}(x).
\end{eqnarray*}
Similar to the method used in the case $0\leq t<1$, we can complete
the proof of the case $t\geq 1$. This completes the proof of this
theorem.
\end{proof}
By Theorem \ref{th11}, we can get the following lemma.
\begin{lemma}\label{le20}
Given $r>0, t\geq 0, 1<p<{n}/{(r+t)}$ and $\tau>{1}/{p'}$. If
$f\in M^{t,\tau}_{r,p}(\mathbb{R}^{n})$ and $g_{u}$ is a
$(t+r,2^{u}, E_{u})-$combination atom, then
$$\|fg_{u}\|_{H^{t,p}}\leq C(1+u)^{-\tau}2^{u}|E_{u}|^{1/p}.$$
\end{lemma}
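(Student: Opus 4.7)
The plan is to apply Theorem \ref{th11} cube-by-cube to the cover $\{Q_{u,s,i,m'} : (s,i,m')\in H_u\}$ of $\text{supp}(g_u)$ and then to assemble the local estimates into a global bound via the wavelet square function.

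First, I would invoke the Step 2 construction preceding Theorem \ref{th11} to see that $\text{supp}(g_u) \subset \bigcup_{(s,i,m')\in H_u} Q_{u,s,i,m'}$. The base cubes $\{Q_{u,s,i}\}_{s\ge 1,\, i\in F_{u,s}}$ are pairwise disjoint with union equal to $E_u$, and for each fixed $(s,i)$ the translates $Q_{u,s,i,m'}$ run over only boundedly many shifts $|m'|<\sqrt{n}2^{(M+3)n}$, so this cover has bounded overlap. Using the wavelet characterization $\|h\|_{H^{t,p}} \simeq \|S_t h\|_{L^p}$ together with the fact that $S_t(fg_u)$ is essentially supported on a slight enlargement of this cover, I would deduce
$$\|fg_u\|_{H^{t,p}}^p \le C \sum_{(s,i,m')\in H_u} \|fg_u\|_{H^{t,p}(Q_{u,s,i,m'})}^p.$$

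Next, on each $Q = Q_{u,s,i,m'}$, Theorem \ref{th11} supplies $\|fg_u\|_{H^{t,p}(Q)}^p \le C(1+j_{u,s})^{-p\tau} 2^{up}|Q|$. I then convert $(1+j_{u,s})^{-\tau}$ decay into $(1+u)^{-\tau}$ using that $g_u$ is a combination atom arising from the decomposition of Theorem \ref{th2} applied to a unit-norm $g\in H^{t+r,p}$, which forces $|E_u|\le C2^{-up}$. Since $Q_{u,s,i}\subset E_u$ with $|Q_{u,s,i}| = 2^{-nj_{u,s}}$, I obtain $j_{u,s} \ge (p/n)u - C$, hence $(1+j_{u,s})^{-p\tau}\le C(1+u)^{-p\tau}$. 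Summing the local bounds and using disjointness of the base cubes together with bounded overlap of their translates,
$$\sum_{(s,i,m')\in H_u}|Q_{u,s,i,m'}| \le C\sum_{s\ge 1,\, i\in F_{u,s}} |Q_{u,s,i}| = C|E_u|,$$
which yields $\|fg_u\|_{H^{t,p}}^p \le C(1+u)^{-p\tau} 2^{up}|E_u|$, and the claim follows by taking the $p$-th root.

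The main obstacle I expect is the first step: because $\|\cdot\|_{H^{t,p}}$ is non-local for non-integer $t$, assembling the local bounds from Theorem \ref{th11} into a global estimate is not automatic. One must work through the wavelet square function, noting that $S_t(fg_u)$ is not literally supported on the chosen cover but only essentially so (up to the scale $2^{-j}$ of the wavelet supports), and verify that the bounded-overlap summation captures $\|S_t(fg_u)\|_{L^p}^p$ up to a constant. A secondary subtlety is to ensure that the bound $|E_u|\le C 2^{-up}$ used to promote $j_{u,s}$-decay to $u$-decay is available in the intended setup, which it is for the atoms produced by Theorem \ref{th2}.
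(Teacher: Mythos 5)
Your proposal follows the route the paper intends: the paper offers no written proof of Lemma \ref{le20} beyond the remark that it follows from Theorem \ref{th11}, and summing the local estimates of Theorem \ref{th11} over the bounded-overlap cover $\{Q_{u,s,i,m'}\}_{(s,i,m')\in H_{u}}$ of ${\rm supp}(g_{u})$, with $\sum_{(s,i,m')\in H_{u}}|Q_{u,s,i,m'}|\leq C|E_{u}|$ coming from the disjointness of the base cubes $Q_{u,s,i}$ and the boundedly many translates $m'$, is the only reasonable way to carry that out. Your handling of the non-locality of $H^{t,p}$ is also defensible: the index sets $\Lambda^{Q}_{n,i}$ in the proof of Theorem \ref{th11} already collect every coefficient whose wavelet support meets $Q$, and since only scales $j\geq j_{u,s}$ occur over $Q_{u,s,i,m'}$, the part of $S_{t}(fg_{u})$ living outside the cover is confined to $C2^{-j_{u,s}}$-neighbourhoods of the cubes, whose total measure is again $\leq C|E_{u}|$.

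The point you downgrade to a ``secondary subtlety'' is actually the crux, and it exposes a defect of the lemma's statement rather than of your argument. The passage from $(1+j_{u,s})^{-\tau}$ to $(1+u)^{-\tau}$ requires $j_{u,s}\gtrsim u$, which you obtain from $2^{-nj_{u,s}}=|Q_{u,s,i}|\leq|E_{u}|\leq C2^{-pu}$; but the bound $|E_{u}|\leq C2^{-pu}$ is nowhere among the hypotheses of Lemma \ref{le20} and does not follow from Definition \ref{def4}. It is genuinely needed: $g_{u}=2^{u}\Phi^{\varepsilon}_{0,0}$ is a $(t+r,2^{u},[0,1]^{n})$-combination atom for every $u$, and the stated conclusion applied to it would give $\|f\Phi^{\varepsilon}_{0,0}\|_{H^{t,p}}\leq C(1+u)^{-\tau}$ for all $u$, forcing $f\Phi^{\varepsilon}_{0,0}=0$. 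So Lemma \ref{le20} as written is false for arbitrary combination atoms; what your argument proves --- and all that Theorem \ref{th5} actually needs --- is the version restricted to atoms satisfying $2^{pu}|E_{u}|\leq C$, which the decomposition of Theorem \ref{th2} supplies. You should state that extra hypothesis explicitly rather than leaving it to ``the intended setup''.
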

\begin{theorem}\label{th5}
Given $r>0, t\geq 0, 1<p<{n}/{(r+t)}$ and $\tau>{1}/{p'}$. If
$f\in M^{t,\tau}_{r,p}(\mathbb{R}^{n})$, then $f\in
X^{t}_{r,p}(\mathbb{R}^{n})$.
\end{theorem}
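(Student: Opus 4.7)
The plan is to reduce the multiplier estimate to the atomic estimate in Lemma \ref{le20} via the combination atom decomposition of Theorem \ref{th2}. First, I would take any $g\in H^{t+r,p}(\mathbb{R}^{n})$ with $\|g\|_{H^{t+r,p}}\le 1$ and apply Theorem \ref{th2} (with index $t+r$ in place of $r$) to obtain a family of $(t+r,2^{u},E_{u})$-combination atoms $g_{u}$ with $g=\sum_{u\in\mathbb{N}} g_{u}$ and $\sum_{u}2^{pu}|E_{u}|\le C$. Since $H^{t,p}(\mathbb{R}^{n})$ is a Banach space, the triangle inequality yields
$$\|fg\|_{H^{t,p}(\mathbb{R}^{n})}\le \sum_{u}\|fg_{u}\|_{H^{t,p}(\mathbb{R}^{n})}.$$

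Next, I would invoke Lemma \ref{le20}, which is the main engine and provides the atom-wise bound $\|fg_{u}\|_{H^{t,p}(\mathbb{R}^{n})}\le C(1+u)^{-\tau}2^{u}|E_{u}|^{1/p}$ with a constant proportional to $\|f\|_{M^{t,\tau}_{r,p}(\mathbb{R}^{n})}$. Substituting this into the previous line gives
$$\|fg\|_{H^{t,p}(\mathbb{R}^{n})}\le C\|f\|_{M^{t,\tau}_{r,p}(\mathbb{R}^{n})}\sum_{u\in\mathbb{N}}(1+u)^{-\tau}\,2^{u}|E_{u}|^{1/p}.$$

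The remaining step is the summation. Applying H\"{o}lder's inequality with conjugate exponents $p'$ and $p$ to the pair of sequences $\{(1+u)^{-\tau}\}$ and $\{2^{u}|E_{u}|^{1/p}\}$, I obtain
$$\sum_{u}(1+u)^{-\tau}2^{u}|E_{u}|^{1/p}\le \Bigl(\sum_{u}(1+u)^{-\tau p'}\Bigr)^{1/p'}\Bigl(\sum_{u}2^{pu}|E_{u}|\Bigr)^{1/p}.$$
The second factor is controlled by the atomic decomposition, and the first factor is finite precisely because the hypothesis $\tau>1/p'$ forces $\tau p'>1$. Combining these estimates yields $\|fg\|_{H^{t,p}(\mathbb{R}^{n})}\le C\|f\|_{M^{t,\tau}_{r,p}(\mathbb{R}^{n})}\|g\|_{H^{t+r,p}(\mathbb{R}^{n})}$, and taking the supremum over admissible $g$ gives $f\in X^{t}_{r,p}(\mathbb{R}^{n})$ as required.

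The heavy lifting has already been absorbed into Lemma \ref{le20} (and Theorem \ref{th11} behind it), where the paraproduct-style decomposition into the six pieces $T_{1},\dots,T_{6}$ together with the stopping-time selection of the cubes $Q_{u,s,i,k}$ furnishes the crucial logarithmic gain $(1+j_{u,s})^{-\tau}$. Consequently, the only genuine obstacle remaining in this final step is recognising that the index $\tau=1/p'$ is the exact H\"{o}lder threshold for summability of $(1+u)^{-\tau p'}$; this is why $\tau>1/p'$ is sharp, in complete agreement with the counterexample developed in \S 5 for $0<\tau\le 1/p'$.
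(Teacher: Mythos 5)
Your proposal is correct and follows essentially the same route as the paper: decompose $g$ into $(t+r,2^{u},E_{u})$-combination atoms via Theorem \ref{th2}, apply the atom-wise bound of Lemma \ref{le20}, and sum using H\"{o}lder's inequality with exponents $p'$ and $p$, where $\tau p'>1$ guarantees convergence. The paper's proof is just a terser version of exactly this chain of inequalities.
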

\begin{proof}
By Lemma \ref{le20}, we have
\begin{eqnarray*}
\|fg\|_{H^{t,p}}&\leq&\sum_{u\geq0}\|fg_{u}\|_{H^{t,p}}\\
&\leq&C\sum_{u\geq0}(1+u)^{-\tau}2^{u}|E_{u}|^{1/p}\\
&\leq&C\left(\sum_{u\geq0}(1+u)^{-p'\tau}\right)^{1/p'}\left(\sum_{u\geq0}2^{pu}|E_{u}|\right)^{1/p}\\
&\leq&C\|g\|_{H^{t+r,p}}.
\end{eqnarray*}
\end{proof}

\section{The sharpness for  the multiplier spaces
$M^{t, \tau }_{r,p}(\mathbb{R}^{n})$} In this section,  applying our
wavelet characterization of multiplier spaces and fractal theory, we
prove that the scope of the index of
$M^{t,\tau}_{r,p}(\mathbb{R}^{n})$ obtained in Theorem \ref{th5} is
sharp for $r+t<1$.  Precisely, by Meyer wavelets, we construct a
counterexample to show that  Theorem
\ref{th5} is not true for the case $0\leq\tau\leq {1}/{p'}$.

Our key idea  is to construct a group of sets $C_{s}$ composed by
special dyadic cubes and fractal set $H$ with Hausdorff dimension $n-(t+r)p$.
Denote by $S_{s}$ the union
$\bigcup\limits_{Q\in C_{s}} Q $ and $H=\bigcap\limits_{s\geq 1} S_{s}$.
By the above dyadic cubes $S_{s}, s\geq0$, we construct a special $L^{p}$ function
$g(x)$, which is bounded on $S_{s}\backslash S_{s+1}$ for all $s\geq
1$. The fractional integration $I_{r+t}g(x)$ bumps on the fractal
set $H$. Then we construct a multiplier $f(x)$ such that its wavelet
coefficients
 are based on these special dyadic cubes $C_{s}$ for all $s\geq 1$.
Applying our wavelet characterization of multiplier spaces,  we
prove that the product of the above multiplier $f(x)$ and the function
$I_{r+t}g(x)$ will go out the desired space $H^{t,p}(\mathbb{R}^{n})$.

For the above purpose, we  give first another characterization of
$X^{t}_{r,p}(\mathbb{R}^{n})$ associated with the fractional
integration. Let $\Psi(x)\geq0$, $\Psi(x)\in C^{\infty}_{0}(B(0,2))$
with $\Psi(x)=1$ on $B(0,1)$. We know that $\tilde{g}\in
H^{t+r,p}(\mathbb{R}^{n})$ if and only if there exists $g\in
L^{p}(\mathbb{R}^{n})$ such that $\tilde{g}(x)=J_{t+r}g(x)=\int
g(y)\Psi(|x-y|)|x-y|^{t+r-n}dy$. For $g\in L^{p}(\mathbb{R}^{n})$,
let $g_{j,k}=\int(2^{-j}+|y-2^{-j}k|)^{t+r-n}g(y)dy$. We define the
following function space.
\begin{definition}\label{def10}
Given $r>0$, $t\geq0$ and $1<p<{n}/{(r+t)}$. For
$f(x)=\sum\limits_{(\varepsilon,j,k)\in\Lambda_{n}}f^{\varepsilon}_{j,k}\Phi^{\varepsilon}_{j,k}(x)$,
$f\in Q^{t}_{r,p}(\mathbb{R}^{n})$ if and only if
$$\int\left(\sum_{(\varepsilon,j,k)\in\Lambda_{n}}2^{j(n+2t)}|g_{j,k}|^{2}|f^{\varepsilon}_{j,k}|^{2}\chi(2^{j}x-k)\right)^{p/2}dx\leq C,$$
where $g\in L^{p}(\mathbb{R}^{n})$ and $g(x)\geq0$.
\end{definition}
Let $f(x)=\sum\limits_{(\varepsilon,j,k)\in
\Lambda_{n}}f^{\varepsilon}_{j,k}\Phi^{\varepsilon}_{j,k}(x)$ and
$h(x)=\sum\limits_{(\varepsilon,j,k)\in
\Lambda_{n}}h^{\varepsilon}_{j,k}\Phi^{\varepsilon}_{j,k}(x)$.
Define
$$T_{\Phi}(f,h)(x)=\sum_{j\in\mathbb{N}}\sum_{k\in\mathbb{Z}^{n}}2^{jn}f^{\varepsilon}_{j,k}h^{\varepsilon}_{j,k}\Phi(2^{j}x-k).$$
Similar to Claim 2 in Theorem \ref{th3}, we can get
\begin{lemma}\label{le14}
Given $t\geq 0,r>0$ and $t+r<1<p<{n}/{(t+r)}$. Let $g\in
H^{t+r,p}(\mathbb{R}^{n})$ and $h\in H^{-t,p'}(\mathbb{R}^{n})$. The
function $f(x)\in S^{\Phi, t}_{r,p}(\mathbb{R}^{n})$ if and only if
$$|\left<T_{\Phi}(f,h),\ g\right>|\leq
C\|g\|_{H^{r+t,p}(\mathbb{R}^{n})}\|h\|_{H^{-t,p'}(\mathbb{R}^{n})}.$$
\end{lemma}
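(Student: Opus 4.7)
My plan is to mirror the argument for Claim 2 inside the proof of Theorem \ref{th3}(ii), since this lemma is essentially the Claim extracted as a standalone statement with $T_\Phi$ in place of $T_{4,\varepsilon}$.

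For the forward direction, I would start by unfolding the pairing. Writing $g^\Phi_{j,k}=\langle g,2^{jn}\Phi(2^j x-k)\rangle$ and using orthogonality I get
\[
\langle T_\Phi(f,h),g\rangle = \sum_{(\varepsilon,j,k)\in\Lambda_n} f^{\varepsilon}_{j,k}\,h^{\varepsilon}_{j,k}\,g^\Phi_{j,k}.
\]
Then, using $2^{jn}\int \chi(2^j x-k)\,dx=1$ to reinsert the cube indicator, I bound $|\langle T_\Phi(f,h),g\rangle|$ by an integral of a triple product and split the factor $2^{jn}\chi$ as $\bigl(2^{j(n/2+t)}\chi^{1/2}\bigr)\bigl(2^{j(n/2-t)}\chi^{1/2}\bigr)$. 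Cauchy--Schwarz in $(\varepsilon,j,k)$ then separates the $|f||g^\Phi|$ factor from $|h|$, and H\"{o}lder in $L^p\cdot L^{p'}$ gives
\[
|\langle T_\Phi(f,h),g\rangle|\le \Bigl\|A_{f,g}^{1/2}\Bigr\|_{L^p}\Bigl\|B_h^{1/2}\Bigr\|_{L^{p'}},
\]
where $A_{f,g}=\sum 2^{j(n+2t)}|g^\Phi_{j,k}|^2|f^{\varepsilon}_{j,k}|^2\chi(2^j x-k)$ and $B_h=\sum 2^{j(n-2t)}|h^{\varepsilon}_{j,k}|^2\chi(2^j x-k)$. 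The first factor is controlled by $\|f\|_{S^{\Phi,t}_{r,p}}\|g\|_{H^{r+t,p}}$ by homogeneity in $g$ of Definition \ref{def5}(ii), and by Lemma \ref{le2} the second factor is equivalent to $\|h\|_{H^{-t,p'}}$.

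For the reverse direction, I would use duality exactly as in Claim 2. Set $\tau^{\varepsilon}_{j,k}=2^{j(n/2+t)}f^{\varepsilon}_{j,k}g^\Phi_{j,k}$ and $\tau(x)=\sum \tau^{\varepsilon}_{j,k}\Phi^{\varepsilon}_{j,k}(x)$. By the wavelet characterization of $L^p$,
\[
\int \Bigl(\sum 2^{j(n+2t)}|g^\Phi_{j,k}|^2|f^{\varepsilon}_{j,k}|^2\chi(2^j x-k)\Bigr)^{p/2}dx\ \simeq\ \|\tau\|_{L^p}^{\,p}.
\]
Let $\tilde h=|\tau|^{p-2}\overline{\tau}$, which satisfies $\|\tilde h\|_{L^{p'}}=\|\tau\|_{L^p}^{p-1}$ and $\langle \tau,\tilde h\rangle=\|\tau\|_{L^p}^{\,p}$. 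Now choose $h$ so that $h^{\varepsilon}_{j,k}=2^{jt}\tilde h^{\varepsilon}_{j,k}$, i.e.\ $h=T^{-t}\tilde h$ in the language of Definition \ref{def8}; then by Lemma \ref{le2} we have $\|h\|_{H^{-t,p'}}\simeq\|\tilde h\|_{L^{p'}}$. Rewriting
\[
\langle \tau,\tilde h\rangle = \sum 2^{j(n/2+t)}f^{\varepsilon}_{j,k}g^\Phi_{j,k}\tilde h^{\varepsilon}_{j,k}
\]
and matching exponents against $\langle T_\Phi(f,h),g\rangle=\sum f^{\varepsilon}_{j,k}h^{\varepsilon}_{j,k}g^\Phi_{j,k}$ (absorbing the remaining $2^{j(n/2)}$ prefactor into the definition of $h$, which is exactly the device used in the ``Conversely'' part of Claim 2 where $h_t$ was introduced), the hypothesis yields $|\langle\tau,\tilde h\rangle|\le C\|g\|_{H^{r+t,p}}\|h\|_{H^{-t,p'}}\le C'\|g\|_{H^{r+t,p}}\|\tau\|_{L^p}^{p/p'}$, so dividing gives $\|\tau\|_{L^p}\le C\|g\|_{H^{r+t,p}}$, which is exactly $f\in S^{\Phi,t}_{r,p}$.

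The only subtlety, and the part I would be most careful about, is the scaling bookkeeping in the duality step: the definition of $g^\Phi_{j,k}$ carries a $2^{jn}$ factor, the wavelet coefficients carry an implicit $2^{jn/2}$, and the $T^{t}$ / $H^{-t,p'}$ identification absorbs another $2^{jt}$. These three powers of $2^j$ have to line up so that the auxiliary function $h$ built from $\tilde h=|\tau|^{p-2}\bar\tau$ lies in $H^{-t,p'}$ with norm equivalent to $\|\tau\|_{L^p}^{p-1}$; this is precisely the role of the operator $T^t$ from Definition \ref{def8} and the identity $\|T^th\|_{L^p}=\|h\|_{H^{-t,p}}$ noted immediately after. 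Apart from this scaling, the argument is a direct transcription of Claim 2 and requires no new ideas.
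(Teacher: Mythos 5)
Your proposal follows exactly the route the paper itself intends: Lemma \ref{le14} is justified there by the remark ``similar to Claim 2 in Theorem \ref{th3}'', and both your forward step (reinserting $2^{jn}\int\chi(2^jx-k)\,dx=1$, splitting $2^{jn}=2^{j(n/2+t)}\cdot 2^{j(n/2-t)}$, then Cauchy--Schwarz and H\"older) and your converse step (duality with $\tilde h=|\tau|^{p-2}\overline{\tau}$ and the $2^{jt}$ reweighting that identifies $L^{p'}$ with $H^{-t,p'}$) are a faithful transcription of that Claim, so the approach is the same and sound. The scaling subtlety you flag is real but is inherited from the paper's own misprint: for the identity $\int\bigl(\sum 2^{j(n+2t)}|g^{\Phi}_{j,k}|^{2}|f^{\varepsilon}_{j,k}|^{2}\chi(2^{j}x-k)\bigr)^{p/2}dx\simeq\|\tau\|_{L^{p}}^{p}$ to be consistent with the wavelet characterization of $L^{p}$ in Lemma \ref{le2}, one must take $\tau^{\varepsilon}_{j,k}=2^{jt}f^{\varepsilon}_{j,k}g^{\Phi}_{j,k}$ (no $2^{jn/2}$ factor), after which $h^{\varepsilon}_{j,k}=2^{jt}\tilde h^{\varepsilon}_{j,k}$ gives $\|h\|_{H^{-t,p'}}\simeq\|\tilde h\|_{L^{p'}}=\|\tau\|_{L^{p}}^{p/p'}$ and every power of $2^{j}$ lines up.
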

Now we give another characterization of
$X^{t}_{r,p}(\mathbb{R}^{n})$.
\begin{theorem}\label{th10}
Given $t\geq 0,r>0$ and $t+r<1<p<{n}/{(t+r)}$. $f\in
X^{t}_{r,p}(\mathbb{R}^{n})$ if and only if $f\in
M^{t}_{r,p}(\mathbb{R}^{n})$ and $f\in Q^{t}_{r,p}(\mathbb{R}^{n})$.
\end{theorem}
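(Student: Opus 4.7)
The strategy is to reduce Theorem \ref{th10} to Theorem \ref{th3}(ii) by showing that, given $f \in M^{t}_{r,p}(\mathbb{R}^{n})$, the conditions $f \in S^{\Phi,t}_{r,p}(\mathbb{R}^{n})$ and $f \in Q^{t}_{r,p}(\mathbb{R}^{n})$ are equivalent. Since Theorem \ref{th3}(ii) already identifies $X^{t}_{r,p}$ with $M^{t}_{r,p}\cap S^{\Phi,t}_{r,p}$, this substitution yields Theorem \ref{th10} at once. The bridge between the two conditions is the identification $H^{t+r,p}(\mathbb{R}^{n}) = J_{t+r}(L^{p}(\mathbb{R}^{n}))$ with equivalent norms, used just before Definition \ref{def10}: every $\tilde g \in H^{t+r,p}$ admits a representation $\tilde g = J_{t+r}g$ with $g \in L^{p}$ and $\|g\|_{L^{p}} \simeq \|\tilde g\|_{H^{t+r,p}}$.

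For such $\tilde g$, Fubini gives
\[
\tilde g^{\Phi}_{j,k} = \int K_{j,k}(y)\,g(y)\,dy,\qquad
K_{j,k}(y) = \int \Psi(|x-y|)|x-y|^{t+r-n}\,2^{jn}\Phi(2^{j}x-k)\,dx.
\]
Changing variables $u = 2^{j}x-k$ and using $\Psi \equiv 1$ on $B(0,1)$, $\Phi \geq 0$, one checks by direct estimation that $K_{j,k}(y) \asymp (2^{-j}+|y-2^{-j}k|)^{t+r-n}$ for $|y-2^{-j}k| \leq 1$, while $K_{j,k}(y) \equiv 0$ for $|y-2^{-j}k| \geq 2$. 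In particular, for $g \geq 0$ we obtain both an upper bound $\tilde g^{\Phi}_{j,k} \leq C\,g_{j,k}$ and a lower bound $\tilde g^{\Phi}_{j,k} \geq c\,g_{j,k}^{\mathrm{loc}}$, where $g_{j,k}^{\mathrm{loc}}$ denotes the integral in Definition \ref{def10} restricted to $|y-2^{-j}k| \leq 1$.

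For the $(\Leftarrow)$ direction, assume $f \in M^{t}_{r,p} \cap Q^{t}_{r,p}$. For any $\tilde g \in H^{t+r,p}$ with unit norm, write $\tilde g = J_{t+r}g$; the upper kernel bound gives $|\tilde g^{\Phi}_{j,k}| \leq C\,|g|_{j,k}$, and inserting this into the defining square function of $S^{\Phi,t}_{r,p}$ and invoking $Q^{t}_{r,p}$ applied to $|g| \geq 0$ delivers the $S^{\Phi,t}_{r,p}$ bound; Theorem \ref{th3}(ii) then yields $f \in X^{t}_{r,p}$. For the $(\Rightarrow)$ direction, Lemma \ref{le3} gives $f \in M^{t}_{r,p}$, and the remaining task is to verify the $Q^{t}_{r,p}$ condition. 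Given $g \in L^{p}$, $g \geq 0$, set $\tilde g = J_{t+r}g$ and split $g_{j,k} = g_{j,k}^{\mathrm{loc}} + g_{j,k}^{\mathrm{far}}$. The local piece satisfies $g_{j,k}^{\mathrm{loc}} \leq C\,\tilde g^{\Phi}_{j,k}$, so its contribution is controlled by the $S^{\Phi,t}_{r,p}$ condition on $f$. The far piece $g_{j,k}^{\mathrm{far}}$ is uniformly bounded by $C\|g\|_{L^{p}}$ (by H\"older, using $t+r < n/p$), and its contribution is absorbed using the wavelet-coefficient bound $|f^{\varepsilon}_{j,k}| \leq C\,2^{j(r-n/2)}$ of Lemma \ref{le5} (available since Lemma \ref{le4} gives $M^{t}_{r,p} \subset BMO^{r}$), mimicking the tail estimates in the proof of Theorem \ref{th3}.

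The main obstacle is precisely this far-tail mismatch at $|y-2^{-j}k| \geq 1$, where $K_{j,k}$ is truncated by $\Psi$ but the Riesz-type weight $(2^{-j}+|y-2^{-j}k|)^{t+r-n}$ in $g_{j,k}$ is not. Handling this tail requires combining the uniform bound on $g_{j,k}^{\mathrm{far}}$ with the pointwise decay of the wavelet coefficients of $f$ coming from $BMO^{r}$ regularity, in the spirit of Claim 1 in the proof of Theorem \ref{th3}; the near-diagonal kernel comparison itself is an elementary change-of-variables exercise. Once this tail analysis is in hand, the equivalence of $S^{\Phi,t}_{r,p}$ and $Q^{t}_{r,p}$ under $M^{t}_{r,p}$ follows directly from the kernel comparison, and Theorem \ref{th10} is established.
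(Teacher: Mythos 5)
Your overall route is the same as the paper's: both arguments reduce Theorem \ref{th10} to Theorem \ref{th3}(ii) together with Lemma \ref{le14}, write $\tilde g=J_{t+r}g$ with $g\in L^{p}$, and compare the averaged coefficient $\tilde g^{\Phi}_{j,k}=\int K_{j,k}(y)g(y)\,dy$ with $g_{j,k}=\int(2^{-j}+|y-2^{-j}k|)^{t+r-n}g(y)\,dy$. Your treatment of the direction $M^{t}_{r,p}\cap Q^{t}_{r,p}\Rightarrow X^{t}_{r,p}$ (pointwise upper bound $K_{j,k}(y)\leq C(2^{-j}+|y-2^{-j}k|)^{t+r-n}$, then apply the $Q$-condition to $|g|$) is correct and is exactly what the paper's chain of displayed equivalences amounts to.

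The gap is in the far-tail step of the converse, which you rightly single out as the main obstacle but do not actually close. With only the uniform bound $g^{\mathrm{far}}_{j,k}\leq C\|g\|_{L^{p}}$ and the coefficient decay $|f^{\varepsilon}_{j,k}|\leq C2^{j(r-n/2)}$ from Lemma \ref{le5}, the far contribution to the $Q$-square function is controlled pointwise only by $\sum_{j}2^{j(n+2t)}2^{j(2r-n)}=\sum_{j}2^{2j(t+r)}$, which diverges since $t+r>0$; the tail estimates of Claim 1 in Theorem \ref{th3} concern cross-scale interactions $j\neq j'$ and do not help here. In fact the far tail cannot be absorbed at all under Definition \ref{def10} as written: take $f\equiv 1$, which lies in $X^{t}_{r,p}(\mathbb{R}^{n})$ and in $M^{t}_{r,p}(\mathbb{R}^{n})$, and $g=\chi_{B(0,1)}\geq 0$. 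Then $g_{0,k}\simeq(1+|k|)^{t+r-n}$, and already the $j=0$ block of the $Q$-square function contributes $\sum_{k}(1+|k|)^{p(t+r-n)}$, which is infinite whenever $p\leq n/(n-t-r)$, a nonempty subrange of $1<p<n/(t+r)$. So the implication $X^{t}_{r,p}\subset Q^{t}_{r,p}$ fails unless the weight in $g_{j,k}$ is truncated to a bounded neighbourhood of $2^{-j}k$. This is the same soft spot the paper elides when it asserts $\int 2^{jn}\Phi(2^{j}x-k)\Psi(|x-y|)|x-y|^{t+r-n}dx\simeq(2^{-j}+|y-2^{-j}k|)^{t+r-n}$, which is false for $|y-2^{-j}k|$ large since the left-hand side then vanishes. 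You have located a genuine weakness of the published argument, but your proposed repair does not work; the honest fix is to restate $Q^{t}_{r,p}$ with the truncated weight, after which your local kernel comparison does give the theorem and the application in Theorem \ref{th6} (which only uses $y$ in a bounded set) is unaffected.
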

\begin{proof}
By modifying the coefficients such that
$f^{\varepsilon}_{j,k}h^{\varepsilon}_{j,k}=|f^{\varepsilon}_{j,k}||h^{\varepsilon}_{j,k}|$,
we could suppose $g(y)\geq 0$. By Theorem \ref{th3} (ii) and Lemma
\ref{le14}, we know that $f\in S^{\Phi, t}_{r,p}(\mathbb{R}^{n})$ is
equivalent to
\begin{eqnarray}\label{eq1}
&&\int\int\sum_{(\varepsilon,j,k)\in
\Lambda_{n}}|f^{\varepsilon}_{j,k}||h^{\varepsilon}_{j,k}|2^{jn}\Phi(2^{j}x-k)\Psi(|x-y|)\frac{g(y)}{|x-y|^{n-(t+r)}}dy dx\\
&\leq& C\|g\|_{L^{p}}\|h\|_{H^{-t,p'}}, \forall g\geq0.\nonumber
\end{eqnarray}
By a calculation of
the integral associated to $dx$, we get
$$\int2^{jn}\Phi(2^{j}x-k)\Psi(|x-y|)|x-y|^{t+r-n}dx\simeq (2^{-j}+|y-2^{-j}k|)^{(t+r-n)}.$$
So (\ref{eq1}) is equivalent to the following inequality:
\begin{equation}\label{eq2}
\int\sum_{(\varepsilon,j,k)\in
\Lambda_{n}}|f^{\varepsilon}_{j,k}||h^{\varepsilon}_{j,k}|(2^{-j}+|y-2^{-j}k|)^{t+r-n}g(y)dy\leq
C\|g\|_{L^{p}}\|h\|_{H^{-t,p'}}, \forall g\geq0.
\end{equation}
That is to say
\begin{eqnarray}\label{eq3}
&&\int\int\sum_{(\varepsilon,j,k)\in
\Lambda_{n}}2^{jn}|f^{\varepsilon}_{j,k}||h^{\varepsilon}_{j,k}|\chi(2^{j}x-k)\frac{g(y)}{(2^{-j}+|y-2^{-j}k|)^{n-(t+r)}}dx dy\\
&\leq& C\|g\|_{L^{p}}\|h\|_{H^{-t,p'}}, \forall g\geq0.\nonumber
\end{eqnarray}
Let
$S_{-t}h(x)=\left(\sum\limits_{(\varepsilon,j,k)\in\Lambda_{n}}2^{j(n-2t)}|h^{\varepsilon}_{j,k}|^{2}\chi(2^{j}x-k)\right)^{1/2}$.
We have $\|S_{-t}h\|_{p'}\simeq\|h\|_{H^{-t,p'}}$. Then the
inequality (\ref{eq3}) is equivalent to
\begin{eqnarray}\label{eq4}
&&\int\left(\sum_{(\varepsilon,j,k)\in
\Lambda_{n}}2^{j(n+2t)}|f^{\varepsilon}_{j,k}|^{2}|g_{j,k}|^{2}\chi(2^{j}x-k)\right)^{p/2}dx
\leq C\|g\|^{p}_{L^{p}}, \forall g\geq0.
\end{eqnarray}
The above inequality is equivalent to $f\in
Q^{t}_{r,p}(\mathbb{R}^{n})$.
\end{proof}
\begin{theorem}\label{th6}
If $0\leq\tau\leq\frac{1}{p'}$, there exists $f\in M^{t,
\tau}_{r,p}(\mathbb{R}^{n})$ such that $f$ does not belong to
$X^{t}_{r,p}(\mathbb{R}^{n})$.
\end{theorem}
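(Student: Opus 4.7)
The plan is to invoke Theorem \ref{th10} as a reduction: since $M^{t,\tau}_{r,p}(\mathbb{R}^{n})\subset M^{t}_{r,p}(\mathbb{R}^{n})$ for every $\tau\geq 0$, it suffices to produce $f\in M^{t,\tau}_{r,p}(\mathbb{R}^{n})$ which violates the $Q^{t}_{r,p}$-criterion of Definition \ref{def10}; such an $f$ automatically lies outside $X^{t}_{r,p}$. Concretely, we must exhibit a single nonnegative $g\in L^{p}(\mathbb{R}^{n})$ for which the pairing integral \eqref{eq4} diverges. Both $f$ and $g$ will be concentrated near a Cantor-type fractal $H\subset [0,1]^{n}$ of Hausdorff dimension $d=n-p(r+t)$.

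The scaffolding is built as follows. Fix an integer sequence $(j_{s})_{s\geq 1}$ (take $j_{s}=s$ for $\tau>0$; the case $\tau=0$ is the Lemari\'e situation and requires geometric $j_{s}$) and inductively choose nested families $C_{s}$ of disjoint dyadic cubes of side $2^{-j_{s}}$ with $|C_{s}|\asymp 2^{j_{s}d}$, each cube of $C_{s}$ containing the same number $|C_{s+1}|/|C_{s}|$ of cubes of $C_{s+1}$. Set $S_{s}=\bigcup_{Q\in C_{s}}Q$ and $H=\bigcap_{s}S_{s}$. Define the candidate multiplier via its Meyer-wavelet coefficients
\[
f^{\varepsilon}_{j_{s},k}=\mu_{s}:=2^{j_{s}(r-n/2)}(1+j_{s})^{-\tau-1/p},\qquad Q_{j_{s},k}\in C_{s},\ \varepsilon\in E_{n},
\]
and zero elsewhere. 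By Theorem \ref{th8} and the self-similarity of $C_{s}$, the Morrey-log integral over any dyadic cube $Q$ of level $J$ is dominated by $|Q|^{1-p(r+t)/n}\sum_{j_{s}\geq J}(1+j_{s})^{-p\tau-1}\asymp (1+J)^{-p\tau}|Q|^{1-p(r+t)/n}$, placing $f$ precisely in $M^{t,\tau}_{r,p}(\mathbb{R}^{n})$.

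For the adversarial function, set $g(y)=\sum_{u}\eta_{u}\mathbf{1}_{S_{u}\setminus S_{u+1}}(y)$ with $\eta_{u}=2^{j_{u}(r+t)}\tilde\eta_{u}$ and $(\tilde\eta_{u})$ at the $L^{p}$-borderline, e.g.\ $\tilde\eta_{u}=(1+u)^{-1/p}(\log(1+u))^{-\beta}$ with $\beta>1/p$; since $|S_{u}\setminus S_{u+1}|\asymp 2^{-j_{u}p(r+t)}$, one has $\|g\|_{L^{p}}^{p}\asymp\sum_{u}\tilde\eta_{u}^{p}<\infty$. A shell-by-shell decomposition of $g_{j_{s},k}=\int(2^{-j_{s}}+|y-2^{-j_{s}}k|)^{t+r-n}g(y)\,dy$ around a point $x_{0}\in H$ shows that each shell $S_{u}\setminus S_{u+1}$ with $u\leq s$ contributes $\asymp\tilde\eta_{u}$ (its nearest portion lies at distance $\asymp 2^{-j_{u+1}}$ from $x_{0}$ and the integral of the kernel over the shell in the ancestor cube is $\asymp 2^{-j_{u}(r+t)}$), giving $g_{j_{s},k}\asymp\sum_{u\leq s}\tilde\eta_{u}\asymp s^{1/p'}(\log s)^{-\beta}$. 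Substituting into \eqref{eq4} and using the algebraic identity $|C_{s}|\cdot 2^{j_{s}(n+2t)p/2-j_{s}n}\mu_{s}^{p}=(1+j_{s})^{-p\tau-1}$, which is forced by $d=n-p(r+t)$, the integral collapses to $\sum_{s}(1+j_{s})^{-p\tau-1}g_{j_{s},k}^{p}\asymp\sum_{s}s^{p-2-p\tau}(\log s)^{-p\beta}$. For $\tau<1/p'$ the exponent satisfies $p-2-p\tau>-1$ and the sum diverges outright; at the endpoint $\tau=1/p'$ the exponent equals exactly $-1$ and divergence is recovered by iterated-log refinements of $\tilde\eta_{u}$ that preserve $g\in L^{p}$.

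The principal technical obstacle is the endpoint calibration at $\tau=1/p'$, where every exponential factor cancels and one is left with a critical logarithmic sum: the fractal self-similarity of $C_{s}$ delivers this cancellation, and the threshold $1/p'$ emerges from the duality $1/p+1/p'=1$ between the $L^{p}$-norm of $g$ and the Morrey-log decay of $f$. Further care is needed to verify the Morrey-log bound on arbitrary dyadic cubes (not only those aligned with the scaffolding) and to estimate $g_{j_{s},k}$ including far-field contributions via a dyadic-annulus decomposition, which, thanks to $(p-1)(r+t)>0$, contributes only lower-order corrections to the dominant near-field sum.
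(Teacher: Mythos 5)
Your overall architecture coincides with the paper's: reduce to the $Q^{t}_{r,p}$-criterion of Theorem \ref{th10}, build a Cantor scaffold of dimension $n-p(r+t)$, place the wavelet coefficients of $f$ on its generations, take $g$ constant on the shells $S_{u}\setminus S_{u+1}$ so that $g_{j_{s},k}\asymp\sum_{u\le s}\tilde\eta_{u}$, and collapse the integral \eqref{eq4} to a single series. Your argument is sound for $0\le\tau<1/p'$. The genuine gap is at the endpoint $\tau=1/p'$ --- which is in fact the only case that matters, since an endpoint counterexample lies in $M^{t,\tau}_{r,p}$ for every $\tau\le 1/p'$ and would settle the whole range. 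Your claim that ``divergence is recovered by iterated-log refinements of $\tilde\eta_{u}$'' is false. With your calibration $\mu_{s}=2^{j_{s}(r-n/2)}(1+j_{s})^{-\tau-1/p}$ at $\tau=1/p'$ the weight becomes $(1+j_{s})^{-p}$, and the critical sum is
\begin{equation*}
\sum_{s}s^{-p}\Bigl(\sum_{u\le s}\tilde\eta_{u}\Bigr)^{p}\le\Bigl(\frac{p}{p-1}\Bigr)^{p}\sum_{u}\tilde\eta_{u}^{p}
\end{equation*}
by the discrete Hardy inequality; it is finite for \emph{every} $\tilde\eta\in\ell^{p}$, iterated logarithms or not. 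Worse, the obstruction is structural: the Morrey-log condition at $\tau=1/p'$ forces $\sum_{m\ge s}a_{m}^{p}\lesssim s^{1-p}$ for coefficients of the form $\mu_{m}=a_{m}2^{j_{m}(r-n/2)}$, and an Abel summation combined with H\"older and Hardy then bounds $\sum_{s}a_{s}^{p}G_{s}^{p}$ by $C\|\tilde\eta\|_{\ell^{p}}^{p}$ for any such $a$ and any shell-constant $g$. So no tuning of $\tilde\eta$ (nor of $a$ within your calibration scheme, with generations at bounded spacing) rescues the endpoint.

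By contrast, the paper takes $f^{\varepsilon}_{u_{s},k_{s}}\sim s^{-1/p'}2^{u_{s}(r-n/2)}$, i.e.\ the sequence $a_{s}=s^{-1/p'}$ rather than your $s^{-1}$; with this larger choice the critical series becomes $\sum_{s}s^{-(p-1)}G_{s}^{p}\asymp\sum_{s}[\log_{2}(1+s)]^{-(1+\delta)}=\infty$, and the whole burden shifts to verifying that this larger $f$ still satisfies the logarithmic Morrey bound (the delicate step in the paper's Step on the shell measures $|Q\cap(S_{s+l}\setminus S_{s+l+1})|$, which you should scrutinize carefully if you adopt it). To repair your proof you must either move to the paper's calibration and carry out that Morrey verification, or escape the Hardy obstruction by a genuinely different device (e.g.\ non-uniform generation spacing together with a family of test functions $g_{U}$ of unit norm whose integrals tend to infinity, rather than a single divergent $g$); as written, the endpoint case, and hence the theorem as stated, is not proved.
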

\begin{proof}
We use Meyer wavelets and suppose that $\varepsilon=(1,1,\cdots,
1)$ and $\Phi^{\varepsilon}(0)>0$. First of all, we construct a
group of self similar cubes $\left\{Q_{u}\right\}$ such that the
limitation is a set with Hausdorff measure $n-(t+r)p$.

We construct two series of integers $\{\tau_{s}\}_{s\geq1}$ and
$\{v_{s}\}_{s\geq1}$ such that
$v_{1}=\max\left\{[\frac{4n}{n-(t+r)p}], 2^{(M+2)n}\right\}$, $1\leq
\tau_{s}<[\frac{4n}{n-(t+r)p}]\leq
v_{s}\leq\max\left\{[\frac{5n}{n-(t+r)p}], 2^{(M+2)n}\right\}$.
Denote $\sigma_{s}=\sum\limits_{1\leq i\leq s}\tau_{i}$ and
$u_{s}=\sum\limits_{1\leq i\leq s}v_{i}$. We take $\tau_{s}$ such
that there exists $C>0$ satisfying that
$2^{n\sigma_{s}-(n-(t+r)p)u_{s}}\geq 2^{-nu_{1}}$ and
$\lim\limits_{s\rightarrow+\infty}2^{n\sigma_{s}-(n-(t+r)p)u_{s}}=C$
 and
$n-(t+r)p=\lim\limits_{s\rightarrow+\infty}\frac{n\sigma_{s}}{u_{s}}.$
For $\tau_{s}$ and $v_{s}$, denote $l_{s}=(l_{s,1}, l_{s,2}, \cdots,
l_{s,n})\in\mathbb{Z}^{n}_{s,0}$, if $l_{s}\in\mathbb{Z}^{n}$ and
for $i=1,2,\cdots, n$, we have $0\leq l_{s,i}<2^{\tau_{s}-1}$ or
$0\leq l_{s,i}+2^{\tau_{s}-1}-2^{v_{s}}<2^{\tau_{s}-1}$. Denote
$\mathbb{Z}^{n}_{1}=\mathbb{Z}^{n}_{1,0}$ and for $s\geq 2$, denote
$k_{s}\in\mathbb{Z}^{n}_{s}$, if there exists
$k_{s-1}\in\mathbb{Z}^{n}_{s-1}$ and $l_{s}\in\mathbb{Z}^{n}_{s,0}$
such that $k_{s}=2^{v_{s}}k_{s-1}+l_{s}$.

We divide the unit dyadic cube $Q_{0}=[0,1]^{n}$ into $2^{nv_{1}}$
dyadic cubes. Then we reserve the $2^{n\tau_{1}}$ dyadic cubes which
are near the $2^{n}$ vertex points, that is, we reserve
$C_{1}=\left\{Q_{v_{1}, l_{1}}:\
l_{1}\in\mathbb{Z}^{n}_{1,0}\right\}$ and denote $x\in S_{1}$, if
there exists $l_{1}\in \mathbb{Z}^{n}_{1,0}$ such that $x\in
Q_{v_{1}, l_{1}}$.

For the dyadic cube $Q_{v_{1}, l_{1}}\in C_{1}$, we divide it into
$2^{nv_{2}}$ dyadic cubes, we reserve the $2^{n\tau_{2}}$ dyadic
cubes which are near the $2^{n}$ vertex points, that is, we reserve
$C_{2,l_{1}}=\left\{Q_{u_{2}, 2^{v_{2}}l_{1}+l_{2}},
l_{2}\in\mathbb{Z}^{n}_{1,0}\right\}$. For $s=2$, denote
$C_{2}=\left\{Q_{u_{2}, k_{2}}, k_{2}=2^{v_{2}}l_{1}+l_{2}, l_{1},
l_{2}\in\mathbb{Z}^{n}_{1,0}\right\}$ and denote $x\in S_{2}$, if
there exists $k_{2}\in\mathbb{Z}^{n}_{2}$ such that $x\in Q_{u_{2},
k_{2}}$.

We continue this process until infinity, we get a series of dyadic
cubes $C_{s}$ and sets $ S_{s} $. We know that
$|S_{s}|=2^{n(\sigma_{s}-u_{s})}$ and the limitation of
$\left\{S_{s}\right\}$ is a fractal set $H$ with Hausdorff dimension
$n-(t+r)p$.

For $s\geq1$, let $f_{s}(x)=\sum\limits_{Q_{u_{s}, k_{s}}\subset
C_{s}} f^{\varepsilon}_{u_{s}, k_{s}}\Phi^{\varepsilon}_{u_{s},
k_{s}}(x)$, where $f^{\varepsilon}_{u_{s},
k_{s}}=s^{-{1}/{p'}}2^{-tu_{s}}2^{(t+r)u_{s}}$. Let
$f(x)=\sum\limits_{s\geq 1} f_{s}(x)$. Applying the wavelet
characterization of Morrey spaces, $f\in
M^{t,{1}/{p'}}_{r,p}(\mathbb{R}^{n})$. In fact, for $s, l\geq1$
and any cube $Q$ with $2^{-nu_{s+1}}\leq|Q|<2^{-nu_{s}}$, we have
$|Q\cap(S_{s+l}\backslash S_{s+l+1})|\leq
C2^{-n(\sigma_{s+l}-\sigma_{s})-nu_{s}}$. Hence we get
\begin{eqnarray*}
&&\int_{Q}\left(\sum_{Q_{u_{s}, k_{s}}\subset
Q}2^{u_{s}(n+2t)}|f^{\varepsilon}_{u_{s},
k_{s}}|^{2}\chi(2^{u_{s}}x-k_{s})\right)^{p/2}dx\\
&\lesssim&\int_{Q\backslash S_{s+1}}\left(\sum_{Q_{u_{s},
k_{s}}\subset
Q}2^{u_{s}(n+2t)}s^{-{2}/{p'}}2^{-2tu_{s}}2^{2(r+t)u_{s}}\chi(2^{u_{s}}x-k_{s})\right)^{p/2}dx\\
&+&\sum_{l\geq1}\int_{Q\cap (S_{s+l}\backslash
S_{S+l+1})}\left(\sum_{Q_{u_{s}, k_{s}}\subset
Q}2^{u_{s}(n+2t)}s^{-{2}/{p'}}2^{-2tu_{s}}2^{2(r+t)u_{s}}\chi(2^{u_{s}}x-k_{s})\right)^{p/2}dx\\
&\lesssim&s^{-{p}/{p'}}2^{(t+r)pu_{s}}2^{-nu_{s}}+C\sum_{l\geq1}(s+l)^{-{p}/{p'}}2^{(t+r)pu_{s+l}}2^{-n(\sigma_{s+l}-\sigma_{s})-nu_{s}}\\
&\lesssim&(1-\log_{2}|Q|)^{-{p}/{p'}}|Q|^{1-{(t+r)p}/{n}}.
\end{eqnarray*}
Let $\delta$ be a sufficient small positive real number. For
$\forall x\in [0,1]^{n}\backslash S_{1}$, then $g(x)=1$. For $s\geq
1$ and $x\in S_{s}\backslash S_{s+1}$, we take
$g(x)=s^{-{1}/{p}}[\log_{2}(1+s)]^{-{(1+\delta)}/{p}}2^{(t+r)u_{s}}$.
Then we have $g\in L^{p}([0,1]^{n})$. In fact,
\begin{eqnarray*}
\int_{[0, 1]^{n}}|g(x)|^{p}dx&=&\int_{[0,1]^{n}\backslash
S_{1}}g^{p}(x)dx+\sum_{s\geq1}\int_{S_{s}\backslash
S_{s+1}}g^{p}(x)dx\\
&\leq&C+C\sum_{s\geq1}\int_{S_{s}\backslash
S_{s+1}}s^{-1}[\log_{2}(1+s)]^{-(1+\delta)}2^{(t+r)pu_{s}}dx\\
&\leq&C\sum_{s\geq1}s^{-1}[\log_{2}(1+s)]^{-(1+\delta)}2^{(t+r)pu_{s}}2^{n(\sigma_{s}-u_{s})}\\
&\leq&C.
\end{eqnarray*}
Now we estimate the coefficients $|g_{u_{s},k_{s}}|$. We divide the
estimate into two cases.

(1) For $\text{ dist }(Q_{u_{s},k_{s}},S_{s})\leq 2^{-u_{s}}$,
\begin{eqnarray*}
g_{j,k}&=&\int_{S_{s}}g(y)(2^{-u_{s}}+|y-2^{-u_{s}}k_{s}|)^{(t+r-n)}dy\\
&+&\sum_{1\leq l\leq s-1}\int_{S_{l}\backslash
S_{l+1}}l^{-{1}/{p}}[\log_{2}(1+l)]^{-{(1+\delta)}/{p}}2^{{n(u_{l}-\sigma_{l})}/{p}}(2^{-u_{l}}+|y-2^{-u_{l}}k_{l}|)^{(t+r-n)}dy\\
&\geq&s^{-{1}/{p}}[\log_{2}(1+s)]^{-{(1+\delta)}/{p}}+\sum_{1\leq
l\leq s-1}s^{-{1}/{p}}[\log_{2}(1+s)]^{-{(1+\delta)}/{p}}\\
&\geq&s^{{1}/{p'}}[\log_{2}(1+s)]^{-{(1+\delta)}/{p}}.
\end{eqnarray*}

(2) For  $\text{ dist }(Q_{u_{s},k_{s}},S_{s})>2^{-u_{s}}$, there
exists $l<s$ such that $Q_{u_{s},k_{s}}\subset S_{l-1}\backslash
S_{l}$. It is easy to see that $g_{j,k}$ is equivalent to
$l^{-{1}/{p'}}[\log_{2}(1+l)]^{-{(1+\delta)}/{p}}$.

Finally, we have
\begin{eqnarray*}
&&\int\left(\sum_{(\varepsilon,j,k)\in\Lambda_{n}}2^{j(n+2t)}|g_{j,k}|^{2}|f^{\varepsilon}_{j,k}|^{2}\chi(2^{j}x-k)\right)^{{p}/{2}}dx\\
&\geq&C\sum_{s\geq2}\left(\sum_{\text{ dist }(Q_{u_{s},k_{s}},S_{s})\leq 2^{-u_{s}}}
2^{u_{s}(n+2t)}|g_{u_{s},k_{s}}|^{2}|f^{\varepsilon}_{u_{s},k_{s}}|^{2}\chi(2^{u_{s}}x-k_{s})\right)^{{p}/{2}}dx\\
&\geq&C\sum_{s\geq2}\left(\sum_{\text{ dist
}(Q_{u_{s},k_{s}},S_{s})\leq
2^{-u_{s}}}2^{u_{s}(n+2t)}s^{{2}/{p'}}[\log_{2}(1+s)]^{-{2(1+\delta)}/{p}}
s^{-{2}/{p'}}2^{-2tu_{s}}2^{2(t+r)u_{s}}\chi(2^{u_{s}}x-k_{s})\right)^{{p}/{2}}dx\\
&\geq&\sum_{s\geq2}[\log_{2}(1+s)]^{-(1+\delta)}2^{(t+r)pu_{s}}2^{n(\sigma_{s}-u_{s})}\\
&\geq&\sum_{s\geq2}[\log_{2}(1+s)]^{-(1+\delta)}=\infty.
\end{eqnarray*}
This completes the proof.
\end{proof}

\section{An application to Schr\"{o}dinger type operators with non-smooth potentials}
In \cite{MV},  the multipliers
from $H^{1,2}(\mathbb{R}^{n})$ to
 $H^{-1,2}(\mathbb{R}^{n})$ were studied by V. Maz'ya and I. E. Verbitsky. For a Schr\"odinger operator $L=I-\Delta+V$,
they established many sufficient and necessary conditions such that
$V$ is a multiplier from ${H}^{1,2}(\mathbb{R}^{n})$ to
${H}^{-1,2}(\mathbb{R}^{n})$. In this section, we give an
application of the wavelet characterization of
$X^{t}_{r,p}(\mathbb{R}^{n})$ to the Schr\"{o}dinger type operator
$I+(-\Delta)^{{r}/{2}}+V$.

For $V\in
M^{t,\tau}_{r,p}(\mathbb{R}^{n}),\ (\tau>{1}/{p'})$ and $g(x)\in
H^{t,p}(\mathbb{R}^{n})$, we want to find a solution $f\in H^{t+r,p}(\mathbb{R}^{n})$ to the equation
\begin{equation}\label{eq5}
 (I+(-\Delta)^{{r}/{2}}+V) f(x) = g(x).
 \end{equation}

\begin{remark}
Fixed $r>0$, $t\geq 0$ and $1<p<{n}/{(r+t)}$.
\begin{itemize}
\item[(i)] If there exists a $\delta>0$ such that
$\|V\|_{C^{r+t+\delta}}$ is sufficient small,
according to the continuity of Calder\'on-Zygmund operator
$(I+(-\Delta)^{{r}/{2}}) ( I+(-\Delta)^{{r}/{2}}+V)^{-1}$,
the equation (\ref{eq5}) can be solved easily.
But if we consider a non smooth potential
$V\in M^{t,\tau}_{r,p}(\mathbb{R}^{n})$,
applying the same proof in Lemmas \ref{le4} and \ref{le5},
it is possible that $V$ is not a $L^{\infty}$ function.
\item[(ii)] The condition $\tau>1/p'$ can not be weaken to $\tau\geq 1/p'$.
In fact, according to our counterexample in \S 5, if $r+t<1$, there
exists some $V\in M^{t,1/p'}_{r,p}(\mathbb{R}^{n})$ such that the
operator $I+(-\Delta)^{{r}/{2}}+V$ is not continuous from
$H^{t+r,p}(\mathbb{R}^{n})$ to $H^{t,p}(\mathbb{R}^{n})$.
\end{itemize}
\end{remark}

Now, we use our sufficient condition of multiplier spaces
$X^{t}_{r,p}(\mathbb{R}^{n})$
to get the solution of the equation (6.1).
We need the following two operators.
For $t,r>0$, let $T_{t,r}=[I+(-\Delta)^{t/2}][I+(-\Delta)^{r/2}]
$ and $S_{t,r}=[I+(-\Delta)^{t/2}]VT_{t,r}^{-1}$. In the following lemma, we prove that the operator
$S_{t,r}=[I+(-\Delta)^{t/2}]VT_{t,r}^{-1}$ is bounded on $L^{p}(\mathbb{R}^{n})$

\begin{lemma}\label{le15}
 Given $r>0, t\geq 0, 1<p<{n}/{(r+t)}$ and $\tau>{1}/{p'}$. If $V(x)\in M^{t,\tau}_{r,p}(\mathbb{R}^{n})$,
  the operator $S_{t,r}$ is bounded from $L^{p}(\mathbb{R}^{n})$ to $L^{p}(\mathbb{R}^{n})$ with the operator
  norm less than $C_{t, r, \tau, p}\|V\|_{ M^{t,\tau}_{r,p}}$, where $C_{t, r, \tau, p}$ denotes a constant associated with $t, r, \tau, p$.
\end{lemma}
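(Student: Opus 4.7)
The plan is to factor the operator $S_{t,r}$ through the chain
$L^p(\mathbb{R}^n) \xrightarrow{T_{t,r}^{-1}} H^{t+r,p}(\mathbb{R}^n) \xrightarrow{V\cdot} H^{t,p}(\mathbb{R}^n) \xrightarrow{I+(-\Delta)^{t/2}} L^p(\mathbb{R}^n)$
and check that each arrow is bounded with the quantitative control stated in the lemma.

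First I would recall the Bessel-potential definition of the Sobolev spaces: $I+(-\Delta)^{s/2}$ is an isomorphism from $H^{s,p}(\mathbb{R}^n)$ onto $L^p(\mathbb{R}^n)$ for any $s\geq 0$ and $1<p<\infty$, because $(I+(-\Delta)^{s/2})^{-1}$ is a Bessel-type Fourier multiplier bounded on $L^p$. Consequently $T_{t,r}=[I+(-\Delta)^{t/2}][I+(-\Delta)^{r/2}]$ is an isomorphism from $H^{t+r,p}(\mathbb{R}^n)$ onto $L^p(\mathbb{R}^n)$, so that $T_{t,r}^{-1}:L^p\to H^{t+r,p}$ is bounded with a constant depending only on $t,r,p$. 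The third arrow $I+(-\Delta)^{t/2}:H^{t,p}\to L^p$ is bounded, again with constant depending only on $t,p$.

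The middle arrow is the only substantive step. Given $h\in L^p(\mathbb{R}^n)$, set $f=T_{t,r}^{-1}h\in H^{t+r,p}(\mathbb{R}^n)$ with $\|f\|_{H^{t+r,p}}\lesssim_{t,r,p}\|h\|_{L^p}$. Since $\tau>1/p'$, Theorem \ref{th5} gives $V\in X^t_{r,p}(\mathbb{R}^n)$ together with the quantitative bound $\|V\|_{X^t_{r,p}}\leq C_{t,r,\tau,p}\|V\|_{M^{t,\tau}_{r,p}}$ (by tracing the constants through Theorem \ref{th11} and Lemma \ref{le20}, whose constants depend only on $t,r,\tau,p$). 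By the very definition of $X^t_{r,p}$ this means $Vf\in H^{t,p}(\mathbb{R}^n)$ with
\[
\|Vf\|_{H^{t,p}}\leq \|V\|_{X^{t}_{r,p}}\|f\|_{H^{t+r,p}}\leq C_{t,r,\tau,p}\|V\|_{M^{t,\tau}_{r,p}}\|h\|_{L^p}.
\]

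Finally, applying $I+(-\Delta)^{t/2}$ yields
\[
\|S_{t,r}h\|_{L^p}=\|[I+(-\Delta)^{t/2}]Vf\|_{L^p}\lesssim_{t,p}\|Vf\|_{H^{t,p}}\leq C_{t,r,\tau,p}\|V\|_{M^{t,\tau}_{r,p}}\|h\|_{L^p},
\]
which is exactly the claimed bound. The only step that is not purely Fourier-multiplier bookkeeping is the invocation of Theorem \ref{th5}; that is also the main obstacle, but it has already been done earlier in the paper, so once one verifies that the constant in Theorem \ref{th5} is quantitatively of the stated form, the rest is a routine composition argument.
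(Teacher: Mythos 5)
Your proposal is correct and follows essentially the same route as the paper: the paper's proof is precisely the composition $L^p\xrightarrow{T_{t,r}^{-1}}H^{t+r,p}\xrightarrow{V\cdot}H^{t,p}\xrightarrow{I+(-\Delta)^{t/2}}L^p$, with the middle step supplied by Theorem \ref{th5}. Your version merely spells out the Bessel-potential isomorphism facts and the constant-tracking that the paper leaves implicit.
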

\begin{proof}
By Theorem \ref{th5}, for any $f\in L^{p}(\mathbb{R}^{n})$, we have
\begin{eqnarray*}
\|S_{t,r}f\|_{L^{p}} \leq &\|[I+(-\Delta)^{t/2}]VT^{-1}_{t,r}f\|_{L^{p}} &\leq \|VT^{-1}_{t,r}f\|_{H^{t,p}}\\
\leq & C_{r,t,\tau,
p}\|V\|_{M^{t,\tau}_{r,p}}\|T^{-1}_{t,r}f\|_{H^{t+r,p}} &\leq
C_{r,t,\tau, p}\|V\|_{M^{t,\tau}_{r,p}}\|f\|_{L^{p}}.
\end{eqnarray*}
Then  $S_{t,r}$ is a bounded operator on $L^{p}(\mathbb{R}^{n})$
with the norm less than $C_{r,t,\tau, p}\|V\|_{M^{t,\tau}_{r,p}}$.
\end{proof}

In the following lemma, we prove that $(I+S_{t,r})$ is invertible in
$L^{p}(\mathbb{R}^{n})$ and the inverse $(I+S_{t,r})^{-1}$ can be
written formally as $\sum\limits_{n=0}^{\infty}(-1)^{n}S^{n}_{t,r}$.
\begin{lemma}\label{le16}
Given $r>0, t\geq 0, 1<p<{n}/{(r+t)}$ and $\tau>{1}/{p'}$. If\
$\|V(x)\|_{ M^{t,\tau}_{r,p}}<{1}/{C_{r,t,\tau, p}}$, the
operator $I+S_{t,r}$ is invertible in $L^{p}(\mathbb{R}^{n})$.
\end{lemma}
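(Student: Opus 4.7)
The plan is to invoke the standard Neumann series argument, using Lemma \ref{le15} as the crucial quantitative input. First I would observe that Lemma \ref{le15} provides the operator norm estimate $\|S_{t,r}\|_{L^{p}\to L^{p}}\leq C_{r,t,\tau,p}\|V\|_{M^{t,\tau}_{r,p}}$. Combined with the hypothesis $\|V\|_{M^{t,\tau}_{r,p}}<1/C_{r,t,\tau,p}$, this gives the strict contraction estimate $\|S_{t,r}\|_{L^{p}\to L^{p}}<1$.

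Next I would define $\eta:=\|S_{t,r}\|_{L^{p}\to L^{p}}$ and consider the formal Neumann series
\begin{equation*}
\sum_{n=0}^{\infty}(-1)^{n}S^{n}_{t,r}.
\end{equation*}
Since $\|S^{n}_{t,r}\|_{L^{p}\to L^{p}}\leq \eta^{n}$ and $\sum_{n\geq 0}\eta^{n}=1/(1-\eta)<\infty$, the partial sums form a Cauchy sequence in the Banach algebra of bounded operators on $L^{p}(\mathbb{R}^{n})$ and therefore converge in operator norm to some bounded operator $R$.

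Finally I would verify that $R$ is indeed a two-sided inverse of $I+S_{t,r}$. A direct telescoping computation shows that
\begin{equation*}
(I+S_{t,r})\sum_{n=0}^{N}(-1)^{n}S^{n}_{t,r}=I+(-1)^{N}S^{N+1}_{t,r},
\end{equation*}
and similarly on the other side, so letting $N\to\infty$ and using $\|S^{N+1}_{t,r}\|\leq\eta^{N+1}\to 0$ yields $(I+S_{t,r})R=R(I+S_{t,r})=I$. This gives invertibility in $L^{p}(\mathbb{R}^{n})$. There is no real obstacle here; the entire argument rests on the boundedness established in Lemma \ref{le15}, which in turn relies on the sufficient condition for multipliers provided by Theorem \ref{th5}, and the smallness of $\|V\|_{M^{t,\tau}_{r,p}}$ is precisely calibrated so that Neumann series applies.
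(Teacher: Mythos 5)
Your proposal is correct and follows essentially the same route as the paper: both invoke Lemma \ref{le15} for the bound $\|S_{t,r}\|_{L^{p}\to L^{p}}\leq C_{r,t,\tau,p}\|V\|_{M^{t,\tau}_{r,p}}<1$ and then sum the Neumann series $\sum_{n\geq 0}(-1)^{n}S^{n}_{t,r}$, verifying by telescoping that it is a two-sided inverse. Your write-up is in fact slightly more careful than the paper's in making the operator-norm convergence and the vanishing of the remainder $S^{N+1}_{t,r}$ explicit.
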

\begin{proof}
By Lemma \ref{le15}, the operator $S_{t,r}$ is bounded on $L^{p}(\mathbb{R}^{n})$.
Hence for any $f\in L^{p}(\mathbb{R}^{n})$,
\begin{eqnarray*}
\left\|\sum^{\infty}_{n=0}(-1)^{n}S^{n}_{t,r}f\right\|_{L^{p}}&\leq&\sum^{\infty}_{n=0}\left\|S^{n}_{t,r}f\right\|_{L^{p}}
\leq\sum^{\infty}_{n=0}(C_{r,t,\tau,
p}\|V\|_{M^{t,\tau}_{r,p}})^{n}\|f\|_{L^{p}}.
\end{eqnarray*}
If $\|V\|_{M^{t,\tau}_{r,p}}<{1}/{C_{r,t,\tau, p}}$, the above
series is convergent in $L^{p}(\mathbb{R}^{n})$. Further,
\begin{eqnarray*}
(I+S_{t,r})\left[\sum^{\infty}_{n=0}(-1)^{n}S^{n}_{t,r}\right]&=&\sum^{\infty}_{n=0}(-1)^{n}S^{n}_{t,r}
-\sum^{\infty}_{n=1}(-1)^{n}S^{n}_{t,r} =I.
\end{eqnarray*}
Similarly, we can also get
$\left[\sum^{\infty}\limits_{n=0}(-1)^{n}S^{n}_{t,r}\right](I+S_{t,r})=I$,
 that is, the operator
$I+S_{t,r}$ is invertible in $L^{p}(\mathbb{R}^{n})$.
\end{proof}

\begin{theorem}
Given $r>0, t\geq 0, 1<p<{n}/{(r+t)}$ and $\tau>{1}/{p'}$. If
$\|V\|_{ M^{t,\tau}_{r,p}}<{1}/{C_{r,t,\tau, p}}$, then for
$g(x)\in H^{t,p}(\mathbb{R}^{n})$, there exists a unique solution
$f(x)\in H^{r+t,p}(\mathbb{R}^{n})$ for equation (6.1).
\end{theorem}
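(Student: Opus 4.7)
The plan is to reduce equation (6.1) to an $L^p$ problem of the form $(I+S_{t,r})h = $ (known right-hand side) and then invert using Lemma~\ref{le16}. Concretely, I would apply the operator $I+(-\Delta)^{t/2}$ to both sides of (\ref{eq5}) and use the identity $(I+(-\Delta)^{t/2})(I+(-\Delta)^{r/2}) = T_{t,r}$ to rewrite the equation as
\begin{equation*}
T_{t,r}f + (I+(-\Delta)^{t/2})Vf = (I+(-\Delta)^{t/2})g.
\end{equation*}
Substituting $h = T_{t,r}f$, so that $f = T_{t,r}^{-1}h$, the equation becomes
\begin{equation*}
(I+S_{t,r})h = (I+(-\Delta)^{t/2})g.
\end{equation*}

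Next I would check that each ingredient lives in the right space. Since $g\in H^{t,p}(\mathbb{R}^{n})$, the right-hand side $(I+(-\Delta)^{t/2})g$ lies in $L^{p}(\mathbb{R}^{n})$. Under the hypothesis $\|V\|_{M^{t,\tau}_{r,p}}<1/C_{r,t,\tau,p}$, Lemma~\ref{le16} gives that $I+S_{t,r}$ is invertible on $L^{p}(\mathbb{R}^{n})$, so
\begin{equation*}
h = (I+S_{t,r})^{-1}(I+(-\Delta)^{t/2})g \in L^{p}(\mathbb{R}^{n}).
\end{equation*}
Because $T_{t,r}$ is an isomorphism from $H^{t+r,p}(\mathbb{R}^{n})$ onto $L^{p}(\mathbb{R}^{n})$, we recover $f = T_{t,r}^{-1}h \in H^{t+r,p}(\mathbb{R}^{n})$. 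To verify that this $f$ actually solves (\ref{eq5}), I would note that by Theorem~\ref{th5}, $V \in X^{t}_{r,p}(\mathbb{R}^{n})$, so $Vf \in H^{t,p}(\mathbb{R}^{n})$ and the manipulation of applying $I+(-\Delta)^{t/2}$ is reversible on $H^{t,p}$.

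For uniqueness, I would suppose $(I+(-\Delta)^{r/2}+V)f=0$ for some $f\in H^{t+r,p}(\mathbb{R}^{n})$. Applying $I+(-\Delta)^{t/2}$ and setting $h = T_{t,r}f\in L^{p}(\mathbb{R}^{n})$ yields $(I+S_{t,r})h=0$, whence $h=0$ by the invertibility from Lemma~\ref{le16}, and therefore $f = T_{t,r}^{-1}h = 0$.

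The only genuine obstacle is the justification that $V$ acts as a bounded multiplier from $H^{t+r,p}$ to $H^{t,p}$ so that the product $Vf$ is well defined and the whole reduction is meaningful; but this is exactly the content of Theorem~\ref{th5}, and the smallness hypothesis $\|V\|_{M^{t,\tau}_{r,p}}<1/C_{r,t,\tau,p}$ then provides precisely the contraction-type bound needed for the Neumann series giving $(I+S_{t,r})^{-1}$. All remaining steps are routine bookkeeping with the operators $T_{t,r}$ and $S_{t,r}$.
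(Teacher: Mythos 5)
Your proposal is correct and follows essentially the same route as the paper: both reduce (6.1) to the $L^{p}$ equation $(I+S_{t,r})\tilde f=(I+(-\Delta)^{t/2})g$ via the substitution $\tilde f=T_{t,r}f$ and invert $I+S_{t,r}$ by the Neumann series of Lemma \ref{le16}, with Theorem \ref{th5} guaranteeing that $Vf$ makes sense. Your treatment of uniqueness and of the reversibility of the reduction is slightly more explicit than the paper's, but the argument is the same.
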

\begin{proof}
Because $g\in H^{t,p}(\mathbb{R}^{n})$, we have $\tilde{g}=(I+(-\Delta)^{t/2})g(x)\in L^{p}(\mathbb{R}^{n})$.
By Lemma \ref{le16}, the operator $(I+S_{t,r})$ is invertible in $L^{p}(\mathbb{R}^{n})$.
Hence we can get there exists a unique solution to the following equation in $L^{p}(\mathbb{R}^{n})$,
\begin{equation}\label{eq6}
(I+S_{t,r})\tilde{f}=\tilde{g},
\end{equation}
where $\tilde{g}\in L^{p}(\mathbb{R}^{n})$. Hence for above $\tilde{g}\in L^{p}(\mathbb{R}^{n})$,
$$\tilde{f}=(I+S_{t,r})^{-1}\tilde{g}=(I+S_{t,r})^{-1}(I+(-\Delta)^{{t}/{2}})g(x)\in L^{p}(\mathbb{R}^{n})$$
is a solution to the equation (\ref{eq6}). Write $ f= (I+(-\Delta)^{{r}/{2}})^{-1}(I+(-\Delta)^{{t}/{2}})^{-1} \tilde{f}$.
Then $\tilde{f}(x)\in L^{p}(\mathbb{R}^{n})$ is equivalent to $f\in H^{r+t,p}(\mathbb{R}^{n})$. It is easy to
verify that $f$ is a solution to the equation (\ref{eq5}). This completes the proof.
\end{proof}

\end{document}